\pdfoutput=1
\documentclass[11pt]{article}
\usepackage{graphicx}
\usepackage{amsmath,amsthm,amssymb,amscd}
\usepackage[noblocks]{authblk}
\usepackage{fourier}
\usepackage{listings}
\usepackage{paralist}
\usepackage{enumerate}
\usepackage{color}
\usepackage[margin=1in]{geometry}
\usepackage[square,numbers,sort]{natbib}
\usepackage[small,bf]{caption}
\usepackage[colorlinks,linkcolor=black,bookmarksopen,bookmarksnumbered,
citecolor=black,urlcolor=black]{hyperref}

\graphicspath{{figs/}}
\lstset{language=Python, 
  basicstyle=\normalsize\ttfamily,
  commentstyle=\color[rgb]{0,0,0}\ttfamily,
  keywordstyle=\color{blue},  showstringspaces=false}

\allowdisplaybreaks
\numberwithin{equation}{section}
\newtheorem{theorem}{Theorem}[section]
\newtheorem*{theorem*}{Theorem}
\newtheorem{proposition}[theorem]{Proposition}
\newtheorem*{proposition*}{Proposition}
\newtheorem{lemma}[theorem]{Lemma}
\newtheorem*{lemma*}{Lemma}

\newtheorem*{claim*}{Claim}

\newtheorem*{axiom*}{Axiom}

\newtheorem*{conjecture*}{Conjecture}

\newtheorem*{corollary*}{Corollary}

\theoremstyle{definition}
\newtheorem{definition}[theorem]{Definition}
\newtheorem*{definition*}{Definition}
\newtheorem{example}[theorem]{Example}
\newtheorem*{example*}{Example}

\newtheorem*{exercise*}{Exercise}
\newtheorem*{recall*}{Recall}

\theoremstyle{remark}

\newtheorem*{note*}{Note}
\newtheorem{remark}[theorem]{Remark}
\newtheorem*{remark*}{Remark}
\newtheorem{notation}[theorem]{Notation}
\newtheorem*{notation*}{Notation}

\newtheorem*{question*}{Question}

\newtheorem*{fact*}{Fact}

\theoremstyle{theorem}

\theoremstyle{definition}

\theoremstyle{remark}


\def\R{\mathbb{R}}

\def\Z{\mathbb{Z}}

\newcommand{\Lforms}[2]{L^#1\Omega^#2\bigl(\abs{K}\bigr)}


\DeclareMathOperator{\im}{im}




\DeclareMathOperator{\grad}{grad}
\DeclareMathOperator{\curl}{curl}

\def\div{\operatorname{div}}


\newcommand{\simplex}[1]{[v_0,\ldots,v_{#1}]}

\newcommand{\hasface}{\succeq}
\newcommand{\faceof}{\preceq}
\newcommand{\hasprprface}{\succ}
\newcommand{\prprfaceof}{\prec}

\DeclareMathOperator{\St}{St}
\DeclareMathOperator{\ClSt}{\overline{St}}

\def\d{\operatorname{d}}

\DeclareMathOperator{\codiff}{\delta} 
\DeclareMathOperator{\dcodiff}{\delta}
\DeclareMathOperator{\dd}{d} 
\DeclareMathOperator{\hodge}{\ast} 
\DeclareMathOperator{\dual}{\star}
\DeclareMathOperator{\laplacian}{\Delta}
\DeclareMathOperator{\deRham}{R}
\DeclareMathOperator{\whitney}{W}

\DeclareMathOperator{\boundary}{\partial}

\newcommand{\cochainBasis}[2]{\bigl(\sigma^#1_#2\bigr)^\ast}


\newcommand{\abs}[1]{\lvert#1\rvert}

\newcommand{\norm}[1]{\lVert#1\rVert}

\newcommand{\eval}[2]{\langle #1,#2 \rangle}

\newcommand{\ainnerproduct}[2]{\langle #1, #2 \rangle}
\newcommand{\aInnerproduct}[2]{\bigl\langle #1, #2 \bigr\rangle}
\newcommand{\pinnerproduct}[2]{( #1, #2 )}
\newcommand{\pInnerproduct}[2]{\bigl( #1, #2 \bigr)}

\def\p{\phantom{-}}



\long\def\symbolfootnote[#1]#2{\begingroup%
\def\thefootnote{\fnsymbol{footnote}}\footnote[#1]{#2}\endgroup}







\def\vertexarray{\mathbb{V}}
\def\simplexarray{\mathbb{S}}
\def\cubearray{\mathbb{C}}
\DeclareMathOperator{\vcurl}{\textbf{curl}}

\title{PyDEC: Software and Algorithms for \\ Discretization of
  Exterior Calculus}
\author{Nathan Bell}
\affil{NVIDIA Corporation, nbell@nvidia.com}
\author{Anil N. Hirani}
\affil{Department of Computer Science,
  University of Illinois at Urbana-Champaign,
  hirani@cs.illinois.edu}
\date{}

\begin{document}

\maketitle

\begin{abstract}
  This paper describes the algorithms, features and implementation of
  PyDEC, a Python library for computations related to the
  discretization of exterior calculus.  PyDEC facilitates inquiry into
  both physical problems on manifolds as well as purely topological
  problems on abstract complexes. We describe efficient algorithms for
  constructing the operators and objects that arise in discrete
  exterior calculus, lowest order finite element exterior calculus
  and in related topological problems.  Our algorithms are formulated
  in terms of high-level matrix operations which extend to arbitrary
  dimension. As a result, our implementations map well to the
  facilities of numerical libraries such as NumPy and SciPy. The
  availability of such libraries makes Python suitable for prototyping
  numerical methods.  We demonstrate how PyDEC is used to solve
  physical and topological problems through several concise examples.

  \paragraph{Categories and Subject Descriptors: } G.4 [Mathematical
  Software]; G.1.8 [Numerical Analysis]: Partial Differential
  Equations -- \emph{Finite element methods, Finite volume methods,
    Discrete exterior calculus, Finite element exterior calculus}; I.3.5
  [Computer Graphics]: Computational Geometry and Object Modeling --
  \emph{Geometric algorithms, languages, and systems, Computational
    topology}

\end{abstract}

\section{Introduction}
\label{sec:introduction}
Geometry and topology play an increasing role in the modern language
used to describe physical problems \cite{AbMaRa1988,Frankel2004}.  A
large part of this language is \emph{exterior calculus} which
generalizes vector calculus to smooth manifolds of arbitrary
dimensions. The main objects are differential forms (which are
anti-symmetric tensor fields), general tensor fields, and vector
fields defined on a manifold. In addition to physical applications,
differential forms are also used in cohomology theory in
topology~\cite{BoTu1982}.

Once the domain of interest is discretized, it may not be smooth and
so the objects and operators of exterior calculus have to be
reinterpreted in this context. For example, a surface in $\R^3$ may be
discretized as a two dimensional simplicial complex embedded in
$\R^3$, i.e., as a triangle mesh. Even when the domain is a simple
domain in space, such as an open subset of the plane or space the
discretization is usually in the form of some mesh. The various
objects of the problem then become defined in a piecewise varying
fashion over such a mesh and so a discrete calculus is required there
as well. After discretizing the domains, objects, and operators, one
can compute numerical solutions of partial differential equations
(PDEs), and compute some topological invariants using the same
discretizations. Both these classes of applications are considered
here.

There have been several recent developments in the discretization of
exterior calculus and in the clarification of the role of algebraic
topology in computations. These go by various names, such as covolume
methods \cite{NiTr2006}, support operator methods \cite{Shashkov1996},
mimetic discretization \cite{HySh1997a,BoHy2006}, discrete exterior
calculus (DEC) \cite{Hirani2003,DeHiLeMa2005, GiBa2010}, compatible
discretization, finite element exterior calculus \cite{ArFaWi2006,
  ArFaWi2010, HoSt2011}, edge and face elements or Whitney forms
\cite{Bossavit1988a,Hiptmair2002a}, and so on. PyDEC provides an
implementation of discrete exterior calculus and lowest order finite
element exterior calculus using Whitney forms.

Within pure mathematics itself, ideas for discretizing exterior
calculus have a long history. For example the de Rham map that is
commonly used for discretizing differential forms goes back at least
to \cite{Rham1955}. The reverse operation of interpolating discrete
differential forms via the Whitney map appears in
\cite{Whitney1957}. A combinatorial (discrete) Hodge decomposition
theorem was proved in \cite{Dodziuk1976} and the idea of a
combinatorial Hodge decomposition dates to \cite{Eckmann1945}. More
recent work on discretization of Hodge star and wedge product is in
\cite{Wilson2007,Wilson2008}.  Discretizations on other types of
complexes have been developed as well \cite{GrHi1999,Sen2003}.

\subsection{Main contributions}
In this paper we describe the algorithms and design of PyDEC, a Python
software library implementing various complexes and operators for
discretization of exterior calculus, and the algorithms and data
structures for those. In PyDEC all the discrete operators are
implemented as sparse matrices and we often reduce algorithms to a
sequence of standard high-level operations, such as sparse
matrix-matrix multiplication~\cite{BaDo1993}, as opposed to more
specialized techniques and ad hoc data structures.  Since these
high-level operations are ultimately carried out by efficient,
natively-compiled routines (e.g. C or Fortran implementations) the
need for further algorithmic optimization is generally unnecessary.

As is commonly done, in PyDEC we implement discrete differential forms
as real valued cochains which will be defined in
Section~\ref{sec:overview}. PyDEC has been used in a
thesis~\cite{Bell2008}, in classes taught at University of Illinois,
in experimental parts of some computational topology
papers~\cite{DeHiKr2010, DuHi2011, HiKaWaWa2011}, in Darcy
flow~\cite{HiNaCh2011}, and in least squares ranking on
graphs~\cite{HiKaWa2011}. The PyDEC source code and examples are
publicly available~\cite{BeHi2008a}. We summarize here our
contributions grouped into four areas.

\medskip\noindent{\bf Basic objects and methods:} 
\begin{inparaenum}[(1)]
\item Data structures for : simplicial complexes of dimension $n$
  embedded in $\R^N$, $N\ge n$; abstract simplicial complexes;
  Vietoris-Rips complexes for points in any dimension; and regular
  cube complexes of dimension $n$ embedded in $\R^n$;
\item Cochain objects for the above complexes;
\item Discrete exterior derivative as a coboundary operator,
  implemented as a method for cochains on various complexes.
\end{inparaenum}

\medskip\noindent{\bf Finite element exterior calculus:}
\begin{inparaenum}[(1)]
\item Fast algorithm to construct sparse mass matrices for Whitney
  forms by eliminating repeated computations; and
\item Assembly of stiffness matrices for Whitney forms from mass
  matrices by using products of boundary and stiffness matrices. Note
  that only the lowest order ($\mathcal{P}_1^{-}$) elements of finite
  element exterior calculus are implemented in PyDEC.
\end{inparaenum}

\medskip\noindent{\bf Discrete exterior calculus:}
\begin{inparaenum}[(1)]
\item Diagonal sparse matrix discrete Hodge star for well-centered
  (circumcenters inside simplices) and Delaunay simplicial complexes
  (with an additional boundary condition);
\item Circumcenter calculation for $k$-simplex in an $n$-dimensional
  simplicial complex embedded in $\R^N$ using a linear system in
  barycentric coordinates; and
\item Volume calculations for primal simplices and circumcentric dual
  cells.
\end{inparaenum}

\medskip\noindent{\bf Examples:}
\begin{inparaenum}[(1)]
\item Resonant cavity curl-curl problem; 
\item Flow in porous medium modeled as Darcy flow, i.e., Poisson's
  equation in first order (mixed) form;
\item Cohomology basis calculation for a simplicial mesh, using
  harmonic cochain computation using Hodge decomposition;
\item Finding sensor network coverage holes by modeling an abstract,
  idealized sensor network as a Rips complex; and
\item Least squares ranking on graphs using Hodge decomposition of
  partial pairwise comparison data.
\end{inparaenum}

\section{Overview of PyDEC}
\label{sec:overview}

One common type of discrete domain used in scientific computing is
triangle or tetrahedral mesh. These and their higher dimensional
analogues are implemented as $n$-dimensional simplicial complexes
embedded in $\R^N$, $N \ge n$. Simplicial complexes are useful even
without an embedding and even when they don't represent a manifold,
for example in topology and ranking problems. Such abstract simplicial
complexes without any embedding for vertices are also implemented in
PyDEC. The other complexes implemented are regular cube complexes and
Rips complexes. Regular cubical meshes are useful since it is easy to
construct domains even in high dimensions whereas simplicial meshing
is hard enough in 3 dimensions and rarely done in 4 or larger
dimensions.  Rips complexes are useful in applications such as
topological calculations of sensor network coverage
analysis~\cite{SiGh2007}. The representations used for these four
types of complexes are described in
Section~\ref{sec:simplicial_complex}-\ref{sec:abstrct_smplcl_cmplx}.
A complex that is a manifold (i.e., locally Euclidean) will be
referred to as \emph{mesh}.

The definitions here are given for simplicial complexes and generalize
to the other types of complexes implemented in PyDEC. In PyDEC we only
consider integer valued chains and real-valued cochains. Also, we are
only interested in finite complexes, that is, ones with a finite
number of cells. Let $K$ be a finite simplicial complex and denote its
underlying space by $\abs{K}$. Give $\abs{K}$ the subspace topology as
a subspace of $\R^N$ (a set $U$ in $\abs{K}$ is open iff $U \cap
\abs{K}$ is open in $\R^N$). For a finite complex this is the same as
the standard way of defining topology for $\abs{K}$ \cite[pages
8-9]{Munkres1984} and $\abs{K}$ is a closed subspace of $\R^N$.

An oriented simplex with vertices $v_0,\dots,v_p$ will be written as
$\simplex{p}$ and given names like $\sigma_i^p$ with the superscript
denoting the dimension and subscript denoting its place in some
ordering of $p$-simplices.  Sometimes the dimensional superscript
and/or the indexing subscript will be dropped. The orientation of a
simplex is one of two equivalence classes of vertex orderings. Two
orderings are equivalent if one is an even permutation of the
other. For example $[v_0,v_1,v_2]$ and $[v_1, v_2, v_0]$ denote the
same oriented triangle while $[v_0,v_2,v_1]$ is the oppositely
oriented one.

A \emph{$p$-chain of $K$} is a function $c$ from oriented
$p$-simplices of $K$ to the set of integers $\Z$, such that
$c(-\sigma) = -c(\sigma)$ where $-\sigma$ is the simplex $\sigma$
oriented in the opposite way. Two chains are added by adding their
values. Thus $p$-chains are formal linear combinations (with integer
coefficients) of oriented $p$-dimensional simplices. The space of
$p$-chains is denoted $C_p(K)$ and it is a free abelian group.  See
\cite[page 21]{Munkres1984}. Free
abelian groups have a basis and one does not need to impose a vector
space structure. For example, a basis for $C_p(K)$ is the set of
integer valued functions that are 1 on a $p$-simplex and 0 on the
rest, with one such basis element corresponding to each $p$-simplex.
These are called \emph{elementary chains} and the one corresponding to
a $p$-simplex $\sigma^p$ will also be referred to as $\sigma^p$.  The
existence of this basis and the addition and negation of chains is the
only aspect that is important for this paper.  The intuitive way to
think of chains is that they play a role similar to that played by the
domains of integration in the smooth theory. The negative sign allows
one to talk about orientation reversal and the integer coefficient
allows one to say how many times integration is to be done on that
domain.

Sometimes we will need to refer to a \emph{dual mesh} which will in
general be a cell complex obtained from a subdivision of the given
complex $K$. We'll refer to the dual complex as $\dual K$. For a
discrete Hodge star diagonal matrix of DEC, the dual mesh is the one
obtained from circumcentric subdivision of a well-centered or Delaunay
simplicial complex and such a Hodge star is described in
Section~\ref{sec:metric}.

Homomorphisms from the $p$-chain group $C_p(K)$ to $\R$ are called
\emph{$p$-cochains of $K$} and denoted $C^p(K;\R)$. This set is an
abelian group and also a vector space over $\R$. Similarly the dual
$p$-cochains are denoted $C^p(\dual K; \R)$ or $D^p(\dual K; \R)$.
The discretization map from space of smooth $p$-forms to $p$-cochains
is called the \emph{de Rham} map $\deRham : \Omega^p(K) \to C^p(K;\R)$
or $\deRham : \Omega^p(K) \to C^p(\dual K;\R)$. See \cite{Rham1955,
  Dodziuk1976}. For a smooth $p$-form $\alpha$, the de Rham map is
defined as $\deRham : \alpha \mapsto (c \mapsto \int_c \alpha)$ for
any chain $c \in C_p(K)$. We will denote the evaluation of the cochain
$\deRham(\alpha)$ on a chain $c$ as $\eval{\deRham(\alpha)}{c}$. A
basis for $C^p(K;\R)$ is the set of \emph{elementary cochains}. The
elementary cochain $(\sigma^p)^\ast$ is the one that takes value 1 on
elementary chain $\sigma^p$ and 0 on the other elementary chains. Thus
the vector space dimension of $C^p(K;\R)$ is the number of
$p$-simplices in $K$. We'll denote this number by $N_p$. Thus $N_0$
will be the number of vertices, $N_1$ the number of edges, $N_2$ the
number of triangles and so on.

Like most of the numerical analysis literature mentioned in
Section~\ref{sec:introduction} we assume that the smooth forms are
either defined in the embedding space of the simplicial complex, or on
the complex itself, or can be obtained by pullback from the manifold
that the complex approximates. In contrast, most mathematics
literature quoted including \cite{Whitney1957,Dodziuk1976} uses
simplicial complex defined on the smooth manifold as a ``curvilinear''
triangulation.  In the applied literature, the complex approximates
the manifold. Many finite element papers deal with open subsets of the
plane or $\R^3$ so they are working with triangulations of a manifold
with piecewise smooth boundaries. Surface finite element methods have
been studied outside of exterior calculus~\cite{DeDz2007}.  A
variational crimes methodology is used for finite element exterior
calculus on simplicial approximations of manifolds
in~\cite{HoSt2011}. In the computer graphics literature,
piecewise-linear triangle mesh surfaces embedded in $\R^3$ are common
and convergence questions for operators on such surfaces have been
studied \cite{HiPoWa2006}. In light of all of these, PyDEC's framework
of using simplicial or other approximations of manifolds is
appropriate.

Operators such as the discrete exterior derivative ($\d$) and Hodge
star ($\hodge$) can be implemented as sparse matrices. At each
dimension, the exterior derivative can be easily determined by the
incidence structure of the given simplicial mesh. For DEC the Hodge
star is a diagonal matrix whose entries are determined by the ratios
of primal and dual volumes. Care is needed for dual volume calculation
when the mesh is not well-centered. For finite element exterior
calculus we implement Whitney forms. The corresponding Hodge star is
the mass matrix which is sparse but not diagonal. One of the stiffness
matrices can be obtained from it by combining it with the exterior
derivative.

Once the matrices implementing the basic operators have been
determined, they can be composed together to obtain other operators
such as the codifferential ($\codiff$) and Laplace-deRham
($\laplacian$).  While this composition could be performed manually,
i.e. the appropriate set of matrices combined to form the desired
operation, it is prone to error.  In PyDEC this composition is handled
automatically.  For example, the function $\d(.)$ which implements the
exterior derivative, looks at the dimension of its argument to
determine the appropriate matrix to apply.  The same method can be
applied to the codifferential function $\codiff(.)$, which then makes
their composition $\codiff(\d(.))$ work automatically.  This
automation eliminates a common source of error and makes explicit
which operators are being used throughout the program.

PyDEC is intended to be fast, flexible, and robust.  As an interpreted
language, Python by itself is not well-suited for high-performance
computing.  However, combined with numerical libraries such as NumPy
and SciPy one can achieve fast execution with only a small amount of
overhead.  The NumPy array structure, used extensively throughout
SciPy and PyDEC, provides an economical way of storing N-dimensional
arrays (comparable to C or Fortran) and exposes a C API for
interfacing Python with other, potentially lower-level
libraries~\cite{WaCoVa2011}. In this way, Python can be used to
efficiently ``glue'' different highly-optimized libraries together
with greater ease than a purely C, C++, or Fortran implementation
would permit~\cite{Oliphant2007}.  Indeed, PyDEC also makes extensive
use of the \texttt{sparse} module in SciPy which relies on
natively-compiled C++ routines for all performance-sensitive
operations, such as sparse matrix-vector and matrix-matrix
multiplication.  PyDEC is therefore scalable to large data sets and
capable of solving problems with millions of elements~\cite{Bell2008}.

Even large-scale, high-performance libraries such as Trilinos provide
Python bindings showing that Python is useful beyond the prototyping
stage.  We also make extensive use of Python's built-in unit testing
framework to ensure PyDEC's robustness.  For each non-trivial
component of PyDEC, a number of examples with known results are used
to check for consistency.

\subsection{Previous work}

Discrete differential forms now appear in several finite element
packages such as FEMSTER \cite{CaRiWh2005}, DOLFIN \cite{LoWe2010} and
{deal.II}~\cite{BaHaKa07}. These libraries support arbitrary order
conforming finite element spaces in two and three dimensions. In
contrast, for finite elements PyDEC supports simplicial and cubical
meshes of arbitrary dimension, albeit with lowest order elements. In
addition, PyDEC also supports the operators of discrete exterior
calculus and complexes needed in topology. We note that
Exterior~\cite{LoMa2008}, an experimental library within the
FEniCS~\cite{LoMaWe2012} project, realizes the framework developed by
Arnold et al.~\cite{ArFaWi2009} which generalizes to arbitrary order
and dimension. Exterior uses symbolic methods and supports integration
of forms on the standard simplex. PyDEC supports mass and stiffness
matrices on simplicial and cubical complexes. The discovery of lower
dimensional faces in a complex and the computation of all the boundary
matrices is also implemented in PyDEC.

The other domain where PyDEC is useful is in computational
topology. There are several packages in this domain as well, and again
PyDEC has a different set of features and aims from these.
In~\cite{KaMiKoMr2004} efficient techniques are developed for finding
meaningful topological structures in cubical complexes, such as
digital images. In addition to simplicial and cubical manifolds, PyDEC
also provides support for abstract simplicial complexes such as the
Rips complex of a point set. The Applied and Computational Topology
group at Stanford University has been the source for several packages
for computational topology. These include various versions of PLEX
such as JPlex and javaPlex which are designed for persistent homology
calculations. Another package from the group is Dionysus, a C++
library that implements persistent homology and
cohomology~\cite{EdLeZo2002, ZoCa2005} and other interesting
topological and geometric algorithms. In contrast, we view the role of
PyDEC in computational topology as providing a tool to specify and
represent different types of complexes, compute their boundary
matrices, and compute cohomology representatives with or without
geometric information.

\section{Simplicial Complex Representation}
\label{sec:simplicial_complex}
Before detailing the algorithms used to implement discretizations of
exterior calculus, we discuss the representation of various complexes,
starting in this section with simplicial complexes.  Consider the
triangle mesh shown in Figure~\ref{fig:mesh_example_filled} with
vertices and faces enumerated as shown.  This example mesh is
represented by arrays
\begin{align*}
\vertexarray = \begin{bmatrix}
0 & 0 \\
1 & 0 \\
2 & 0 \\
1 & 1 \\
2 & 1
\end{bmatrix},
&&
\simplexarray_2 = \begin{bmatrix}
0 & 1 & 3 \\
1 & 2 & 3 \\
2 & 4 & 3
\end{bmatrix},
\end{align*}
where the subscript $2$ denotes the dimension of the simplices.  The
$i$-th row of $\vertexarray$ contains the spatial coordinates of the
$i$-th vertex.  Likewise the $i$-th row of simplex array
$\simplexarray_2$ contains the indicies of the vertices that form the
$i$-th triangle.  The indices of each simplex in $\simplexarray_2$ in
this example are ordered in a manner that implies a counter-clockwise
orientation for each.  For an $n$-dimensional discrete manifold, or
mesh, arrays $\vertexarray$ and $\simplexarray_n$ suffice to describe
the computational domain.

\begin{figure}
\begin{center}
     \includegraphics[height=1in]{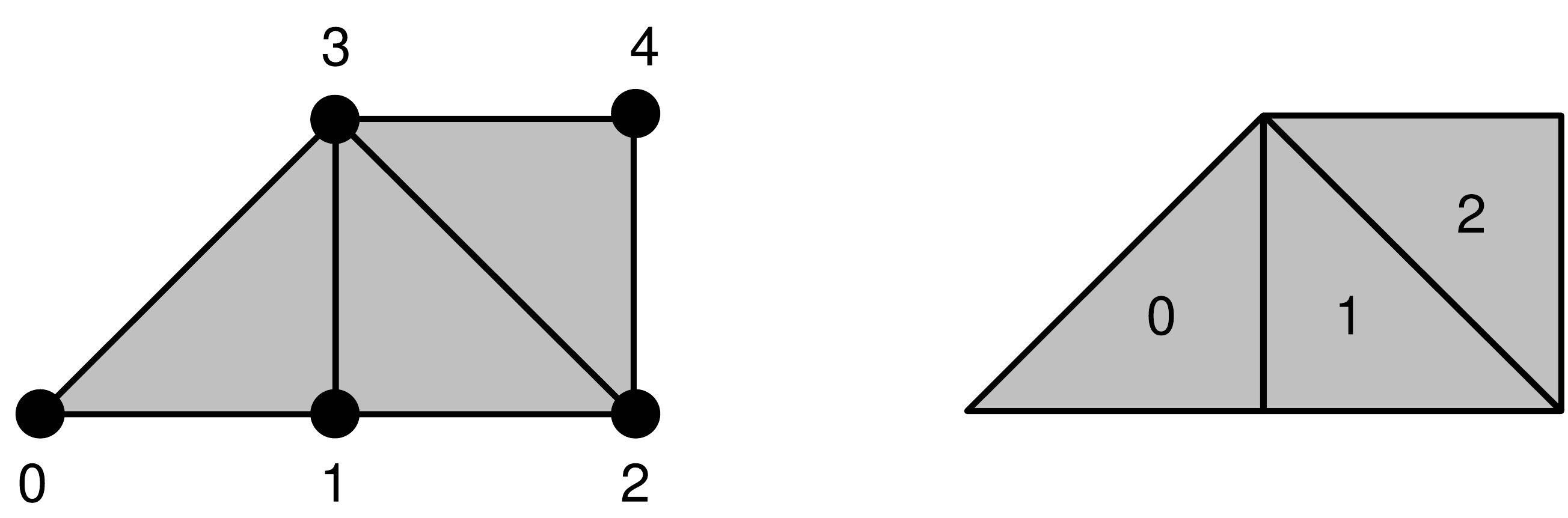}
\end{center}
\caption{Simplicial mesh with enumerated vertices and simplices.}
\label{fig:mesh_example_filled}
\end{figure}

In addition to $\vertexarray$ and $\simplexarray_n$, an
$n$-dimensional simplicial complex is comprised by its $p$-dimensional
faces, $\simplexarray_p, 0 \leq p < n$.  In the case of
Figure~\ref{fig:mesh_example_filled}, these are
\begin{align*}
\simplexarray_0 = \begin{bmatrix}
0\\
1\\
2\\
3\\
4
\end{bmatrix},
&&
\simplexarray_1 = \begin{bmatrix}
0 & 1\\
0 & 3\\
1 & 2\\
1 & 3\\
2 & 3\\
2 & 4\\
3 & 4
\end{bmatrix},
\end{align*}
which correspond to the vertices (0-simplices) and oriented edges
(1-simplices) of the complex.  A graphical representation of this
simplicial complex is shown in Figure~\ref{fig:mesh_example_oriented}.
Since the orientation of the lower ($< n$) dimensional faces is
arbitrary, we use the convention that face indices will be in sorted
order.  Furthermore, we require the rows of $\simplexarray$ to be
sorted in lexicographical order.  As pointed out in
Section~\ref{sec:exterior_derivative} and~\ref{sec:whtny_innrprdct},
these conventions facilitate efficient construction of differential
operators and stiffness matrices.

\begin{figure}
\begin{center}
     \includegraphics[height=1in]{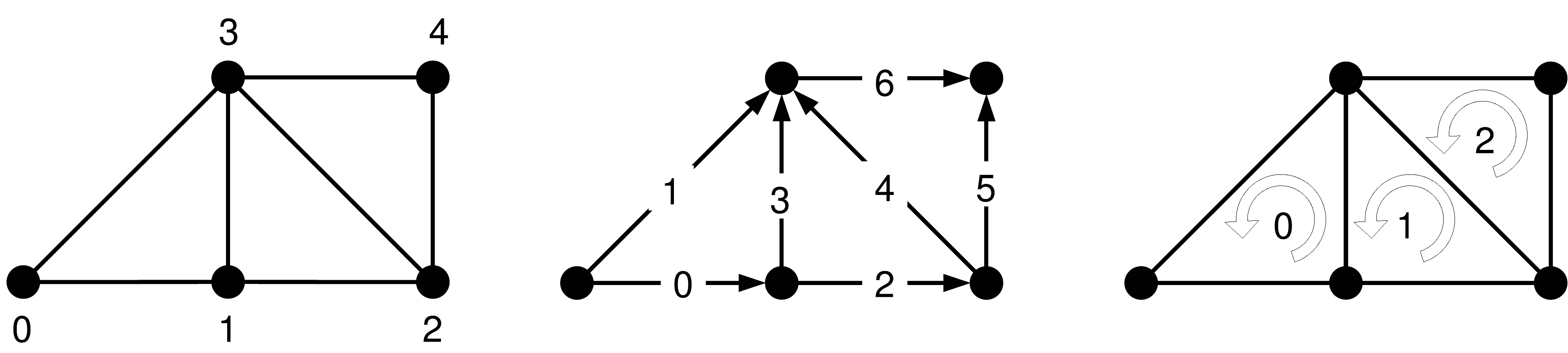}
\end{center}
\caption{Simplicial complex with oriented edges and triangles.}
\label{fig:mesh_example_oriented}
\end{figure}

\section{Regular Cube Complex Representation}
\label{sec:cube_complex}

PyDEC provides a regular cube complex of dimension $n$ embedded in
$\R^n$ for any $n$. As mentioned earlier, in dimension higher than 3,
constructing simplicial manifold complexes is hard. In fact, even
construction of good tetrahedral meshes is still an active area in
computational geometry. This is one reason for using regular cube
complexes in high dimensions. Moreover, for some applications, like
topological image analysis or analysis of voxel data, the regular cube
complex is a very convenient framework~\cite{KaMiKoMr2004}.

A regular cube complex can be easily specified by an $n$-dimensional
array of binary values (bitmap) and a regular $n$-dimensional cube is
placed where the bit is on. For example the cube complex shown in
Figure~\ref{fig:cube_mesh} can be created by specifying the bitmap
array
\[
\begin{bmatrix}
  0 & 1\\ 1 & 1
\end{bmatrix}\, .
\]

A bitmap suffices to describe the top level cubes, but a cube array
(like simplex array) is used during construction of differential
operators and for computing faces. In this paper we describe the
construction of exterior derivative, Hodge star and Whitney forms on
simplicial complexes. For cube complexes we describe only the
construction of exterior derivative and lower dimensional
faces. However, the other operators and objects are also implemented
in PyDEC for such complexes. For example, Whitney-like elements for
hexahedral grids are described in~\cite{BoRo2002} and are implemented
in PyDEC.

\begin{figure}
\begin{center}
     \includegraphics[height=1.25in]{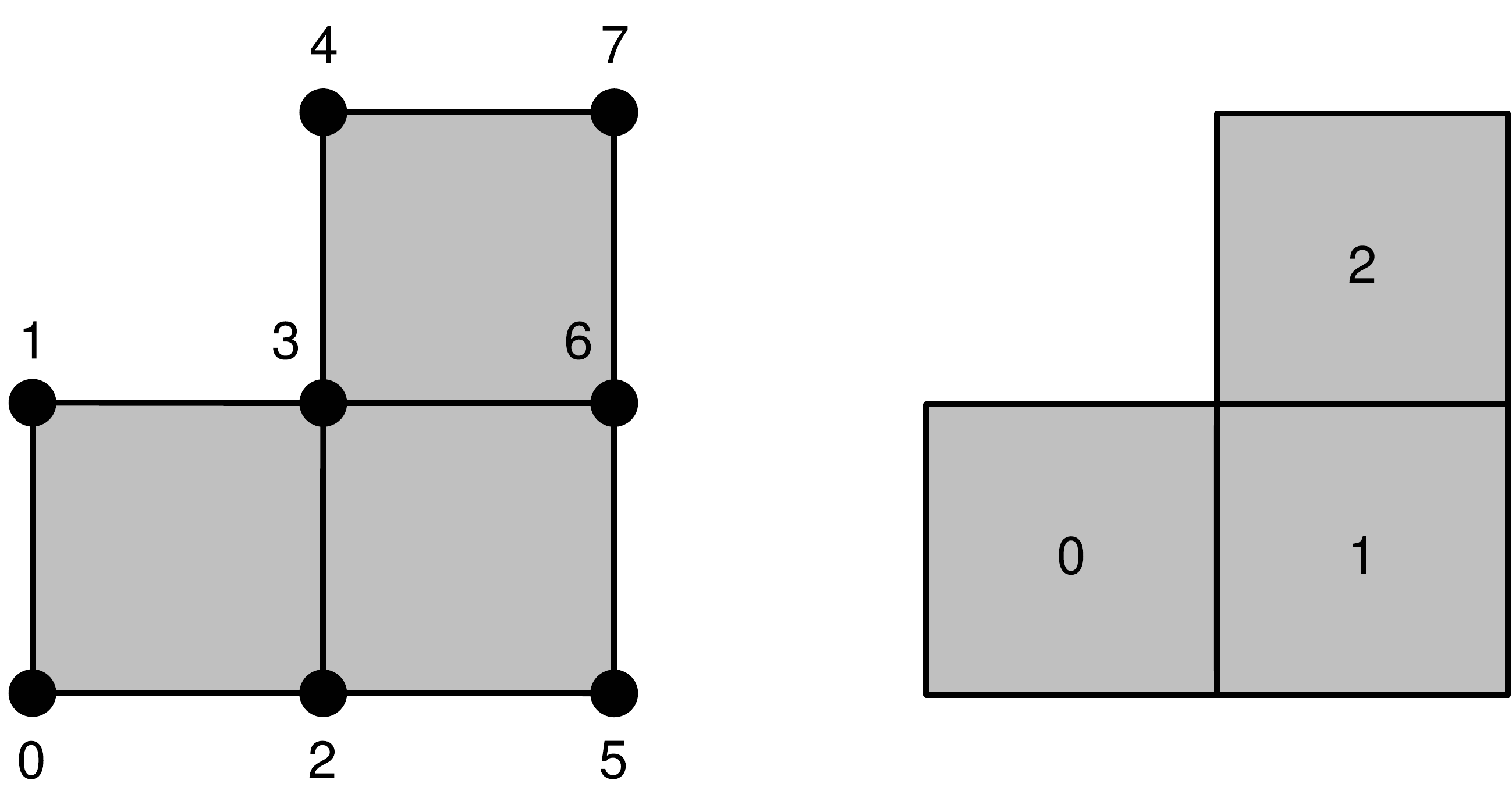}
\end{center}
\caption{Regular cube mesh with enumerated vertices and faces.}
\label{fig:cube_mesh}
\end{figure}

\begin{figure}
\begin{center}
     \includegraphics[height=1.25in]{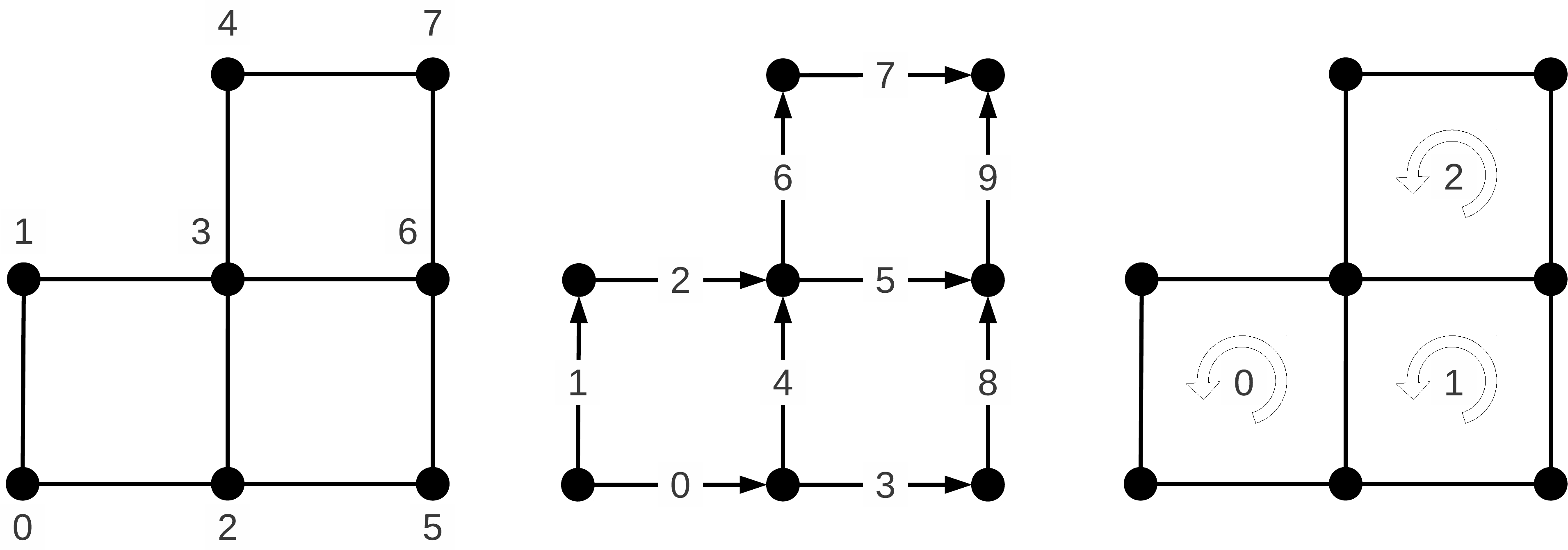}
\end{center}
\caption{Regular cube complex with oriented edges and faces.}
\label{fig:cube_oriented}
\end{figure}

Converting a bitmap representation of a mesh into a cube array
representation is straightforward.  For example, the cube array
representation of the mesh in Figure~\ref{fig:cube_mesh} is
\[
\cubearray_2 = 
\begin{bmatrix}
  0 & 0 & & 0 & 1 \\
  1 & 0 & & 0 & 1 \\
  1 & 1 & & 0 & 1
\end{bmatrix}.
\]
As with the simplex arrays, the rows of $\cubearray_2$ correspond to
individual two-dimensional cubes in the mesh.  The two left-most
columns of $\cubearray_2$ encode the origins of each two-dimensional
cube, namely $(0,0)$, $(1,0)$ and $(1,1)$.  The remaining two columns
encode the coordinate directions spanned by the cube.  Since
$\cubearray_2$ represents the top-level elements, all cubes span both
the $x$ (coordinate $0$) and $y$ (coordinate $1$) dimensions.  In
general, the first $n$ columns of $\cubearray_k$ encode the origin or
corner of a cube while the remaining $k$ columns identify the
coordinate directions swept out by the cube.  We note that the cube
array representation is similar to the cubical notation used by
Sen~\cite{Sen2003}.

The edges of the mesh in Figure~\ref{fig:cube_mesh} are represented by
the cube array
\[
\cubearray_1 = 
\begin{bmatrix}
  0 & 0 & & 0 \\
  0 & 0 & & 1 \\
  0 & 1 & & 0 \\
  1 & 0 & & 0 \\
  1 & 0 & & 1 \\
  1 & 1 & & 0 \\
  1 & 1 & & 1 \\
  1 & 2 & & 0 \\
  2 & 0 & & 1 \\
  2 & 1 & & 1
\end{bmatrix}\,
\]
where again the first two columns encode the origin of each edge and
the last column indicates whether the edge points in the $x$ or $y$
direction.  For example, the row $[0,0,0]$ corresponds to edge $0$
in Figure~\ref{fig:cube_oriented} which begins at $(0,0)$ and
extends one unit in the $x$ direction. Similarly the row $[2, 1, 1]$
encodes an edge starting at $(2,1)$ extending one unit in the $y$
direction. Since zero-dimensional cubes (points) have no spatial extent
their cube array representation
\[
\cubearray_0 = 
\begin{bmatrix}
  0 & 0 \\
  0 & 1 \\
  1 & 0 \\
  1 & 1 \\
  1 & 2 \\
  2 & 0 \\
  2 & 1 \\
  2 & 2 \\
\end{bmatrix}\,
\]
contains only their coordinate locations.

The cube array provides a convenient representation for regular cube
complexes.  While a bitmap representation of the top-level cubes is
generally more compact, the cube array representation generalizes
naturally to lower-dimensional faces and is straightforward to
manipulate.

\section{Rips Complex Representation}
\label{sec:rips_complex}

The \emph{Rips complex}, or Vietoris-Rips complex of a point set is
defined by forming a simplex for every subset of vertices with
diameter less than or equal to a given distance $r$.  For example, if
pair of vertices $(v_i,v_j)$ are no more than distance $r$ apart, then
the Rips complex contains an edge (1-simplex) between the vertices.
In general, a set of $p \geq 2$ vertices forms a $(p-1)$-simplex when
all pairs of vertices in the set are separated by at most $r$.

\begin{figure}
  \begin{center}
    \includegraphics[width=0.7\textwidth]{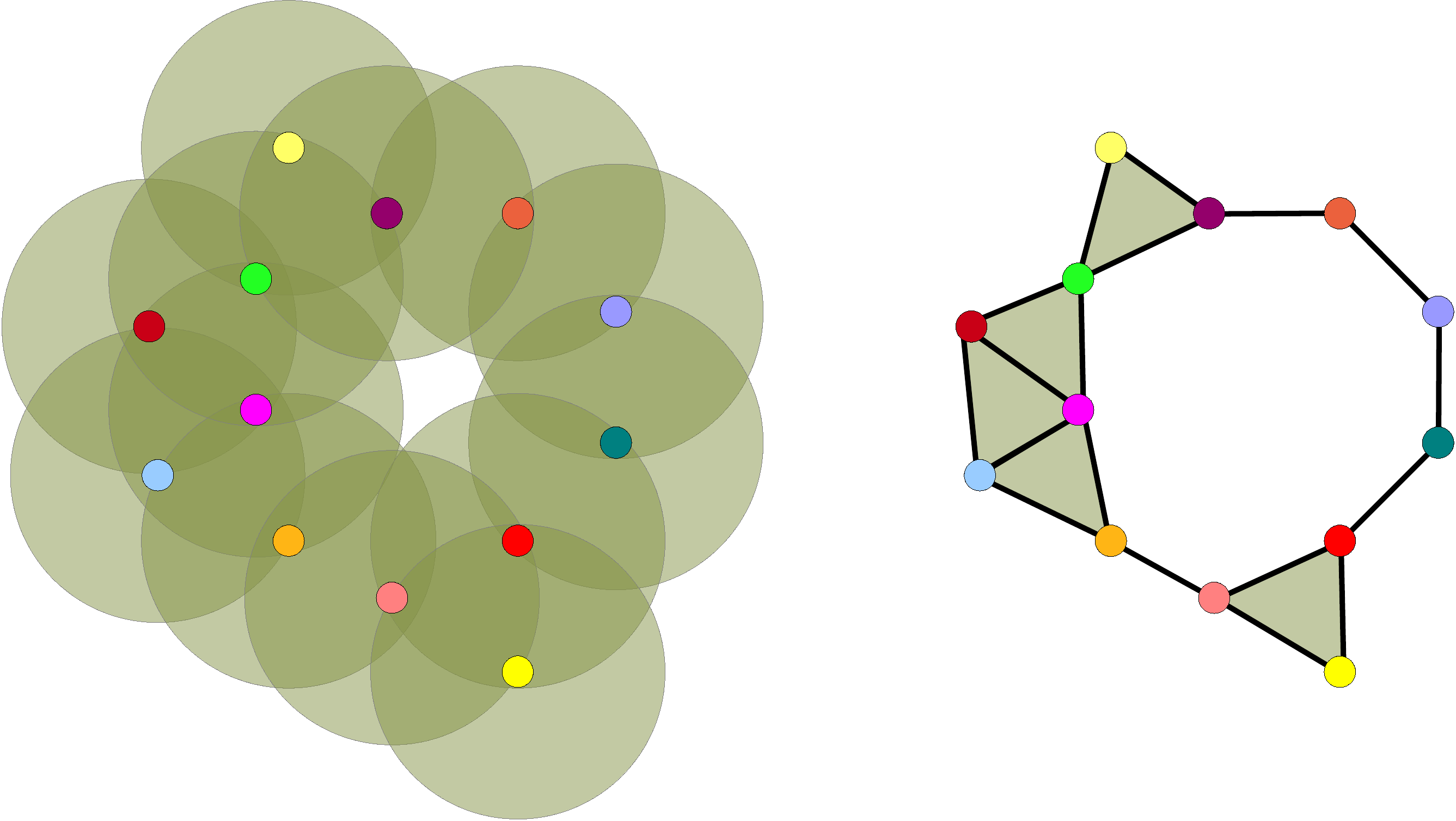} 
  \end{center}
\caption{Broadcast radii and Rips complex for a sensor network.}
\label{fig:sensor_network_illustration}
\end{figure}

In recent work, certain sensor network coverage problems have been
shown to reduce to finding topological properties of the network's
Rips complex, at least for an abstract model of sensor
networks~\cite{SiGh2007}.  Such coordinate-free methods
rely only on pairwise communication between nodes and do not require
the use of positioning devices.  These traits are especially important
in the context of ad-hoc wireless networks with limited per-node
resources.  Figure \ref{fig:sensor_network_illustration} depicts a
planar sensor network and its associated Rips complex.

In this section we describe an efficient method for computing the Rips
complex for a set of points.  Although we consider only the case of
points embedded in Euclidean space, our methodology applies to more
general metric spaces.  Indeed, only the construction of the
$1$-skeleton of the Rips complex requires metric information.  The
higher-dimensional simplices are constructed directly from the
$1$-skeleton.

We compute the $1$-skeleton of the Rips complex with a kD-Tree data
structure.  Specifically, for each vertex $v_i$ we compute the set of
neighboring vertices $\{ v_j : \norm{v_j - v_i} \leq r\}$.  The
hierarchical structure of the kD-Tree allows such queries to be
computed efficiently.

The $1$-skeleton of the Rips complex is stored in an array
$\simplexarray_1$, using the convention discussed in Section
\ref{sec:simplicial_complex}.  Additionally, the (oriented) edges of
the $1$-skeleton are used to define $\mathbb{E}$, a directed graph
stored in a sparse matrix format.  Specifically, $\mathbb{E}(i,j) = 1$
if $[i,j]$ is an edge of the Rips complex, and zero otherwise.  For
the Rips complex depicted in Figure
\ref{fig:simple_rips_complex_example},
\begin{align*}
\simplexarray_1 = \begin{bmatrix}
0 & 1 \\
0 & 2 \\
0 & 3 \\
1 & 3 \\
2 & 3
\end{bmatrix},
&&
\mathbb{E} = \begin{bmatrix}
0 & 1 & 1 & 1 \\
0 & 0 & 0 & 1 \\
0 & 0 & 0 & 1 \\
0 & 0 & 0 & 0
\end{bmatrix},
\end{align*}
are the corresponding simplex array and directed graph respectively.

\begin{figure}
  \begin{center}
    \includegraphics[width=0.7\textwidth]
    {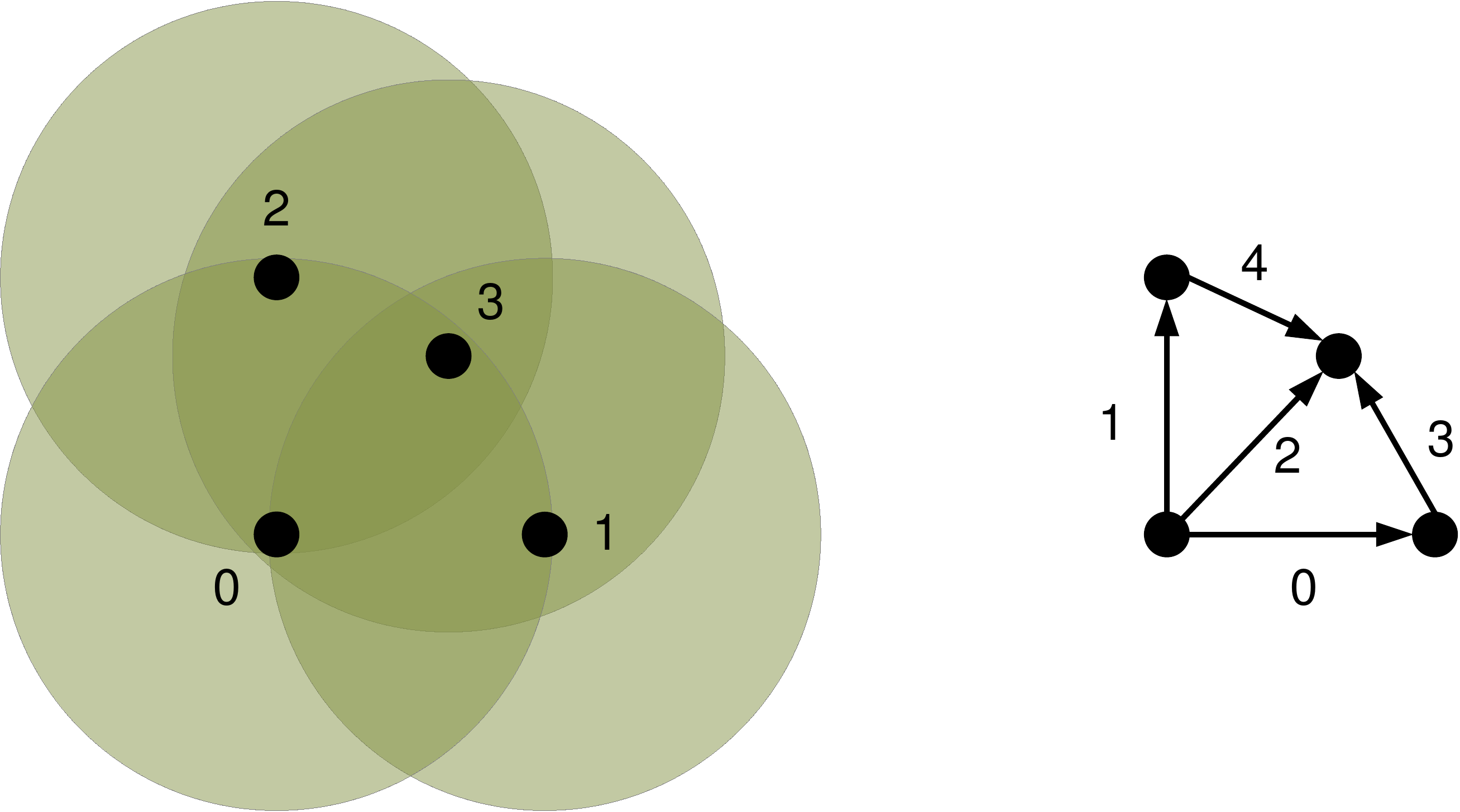} 
  \end{center}
  \caption{Five directed edges form the $1$-skeleton of the Rips
    complex.}
  \label{fig:simple_rips_complex_example}
\end{figure}

The arrays of higher dimensional simplices $\simplexarray_2,
\simplexarray_3, \ldots$ can be computed as follows.  Let
$\mathbb{F}_p$ denote the (sparse) matrix whose rows are identified
with the $p$-simplices as specified by $\simplexarray_p$.  Each row of
$\mathbb{F}_p$ encodes the vertices which form the corresponding
simplex.  Specifically, $\mathbb{F}_p(i,j)$ takes the value $1$ if the
$i$-th simplex contains vertex $j$ and zero otherwise.  For the
example shown in Figure \ref{fig:simple_rips_complex_example},
\begin{align*}
\mathbb{F}_1 = \begin{bmatrix}
1 & 1 & 0 & 0 \\
1 & 0 & 1 & 0 \\
1 & 0 & 0 & 1 \\
0 & 1 & 0 & 1 \\
0 & 0 & 1 & 1
\end{bmatrix},
\end{align*}
encodes the edges stored in $\simplexarray_1$.  Once $\mathbb{F}_p$ is
constructed we compute the sparse matrix-matrix product $\mathbb{F}_p
\,\mathbb{E}$.  For our example the result is
\begin{align*}
\mathbb{F}_1\, \mathbb{E} = 
\begin{bmatrix}
0 & 1 & 1 & 2\\
0 & 1 & 1 & 2\\
0 & 1 & 1 & 1\\
0 & 0 & 0 & 1\\
0 & 0 & 0 & 1
\end{bmatrix}.
\end{align*}
Like $\mathbb{F}_p$, the product $\mathbb{F}_p\, \mathbb{E}$ is a
matrix that relates the $p$-simplices to the vertices: the matrix
entry $(i,j)$ of $\mathbb{F}_p\, \mathbb{E}$ counts the number of
directed edges that exist from the vertices of simplex $i$ to vertex
$j$. When the value of $(i,j)$ entry of $\mathbb{F}_p\, \mathbb{E}$ is
equal to $p+1$, we form a $p$-simplex of the Rips complex by
concatenating simplex $i$ with vertex $j$.  In the example, matrix
entries $(0,3)$ and $(1,3)$ of $\mathbb{F}_1\,\mathbb{E}$ are equal to
$2$ which implies that the $2$-skeleton of the Rips complex contains
two simplices, formed by appending vertex $3$ to the $1$-simplices
$[0, 1]$ and $[0, 2]$, or
\begin{align*}
\simplexarray_2 = \begin{bmatrix}
0 & 1 & 3 \\
0 & 2 & 3
\end{bmatrix},
\end{align*}
in array format.  This process may be applied recursively to develop
higher dimensional simplices $\simplexarray_3, \simplexarray_4,
\ldots$ as required by the application. Thus our algorithm computes
simplices of the Rips complex with a handful of sparse and dense
matrix operations.

\section{Abstract Simplicial Complex Representation}
\label{sec:abstrct_smplcl_cmplx}

In Section~\ref{sec:rips_complex} we saw an example of a simplicial
complex which was not a manifold complex
(Figure~\ref{fig:sensor_network_illustration}). Rips complexes
described in Section~\ref{sec:rips_complex} demonstrate one way to
construct such complexes in PyDEC, starting from locations of
vertices. There are other applications, for example in topology, where
we would like to create a simplicial complex which is not necessarily
a manifold. In addition we would like to do this without requiring
that the location of vertices be given. For example, in topology,
surfaces are often represented as a polygon with certain sides
identified. One way to describe such an object is as an \emph{abstract
  simplicial complex} \cite[Section 3]{Munkres1984}. This is a
collection of finite nonempty sets such that if a set is in the
collection, so are all the nonempty subsets of it.
Figure~\ref{fig:asc_examples} shows two examples of abstract simplicial
complexes created in PyDEC.
\begin{figure}[ht]
  \centering
  \includegraphics[scale=0.4,trim=1.5in 1in 1.5in 2in, clip]
  {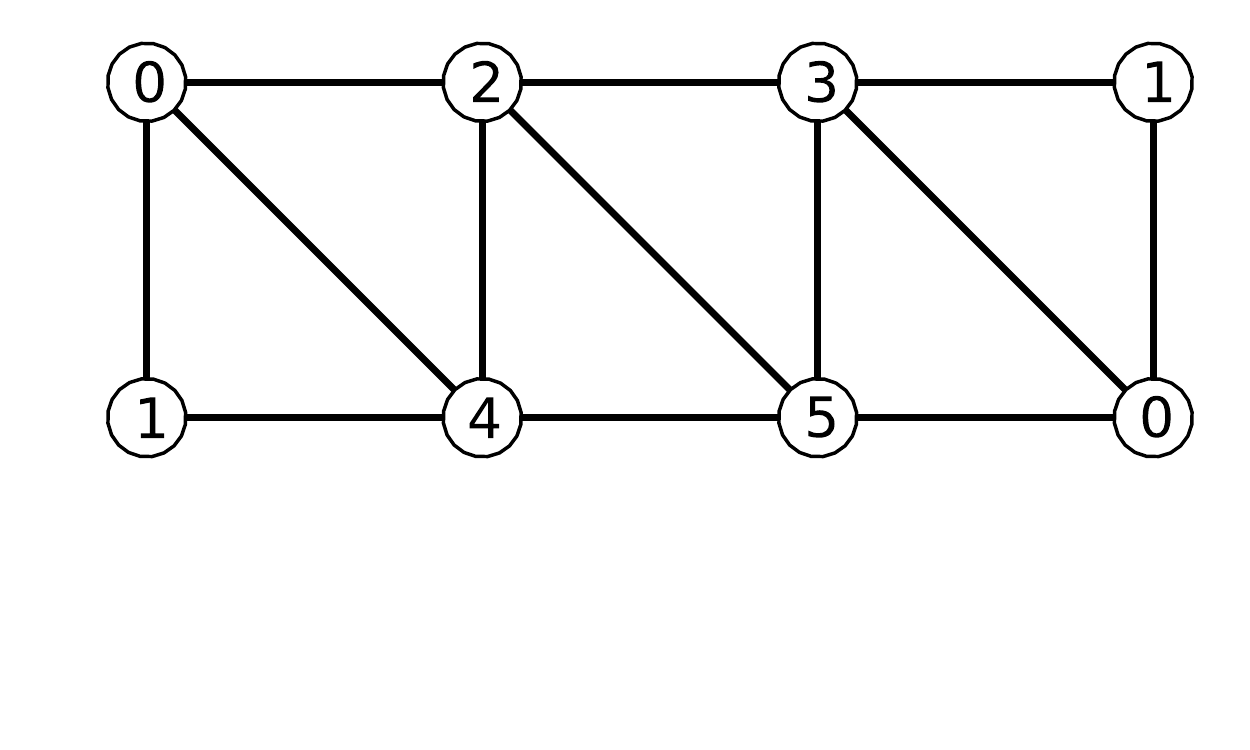}
  \includegraphics[scale=0.4,trim=1.5in 1in 1.5in 1in, clip]
  {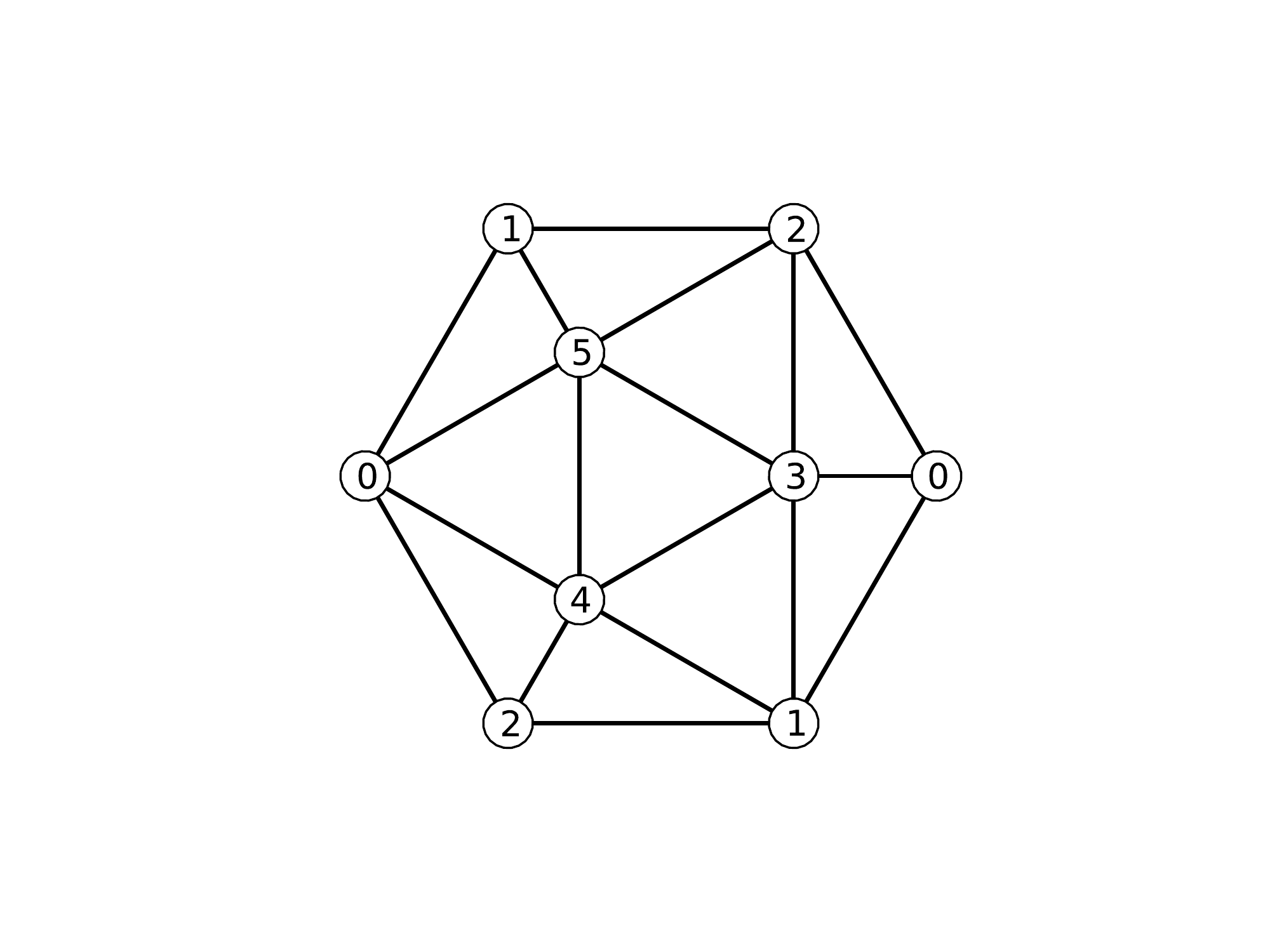}
  \caption{Examples of abstract simplicial complexes. The one of the
    left represents the triangulation of a M\"obius strip and the one
    on the right that of a projective plane.}
  \label{fig:asc_examples}
\end{figure}

In PyDEC, abstract simplicial complexes are created by specifying a
list of arrays. Each array contains simplices of a single dimension,
specified as an array of vertex numbers. Lower dimensional faces of a
simplex need not be specified explicitly. For example the M\"obius
strip triangulation shown in Figure~\ref{fig:asc_examples} can be
created by giving the array
\[
\begin{bmatrix}
0 & 1 & 3\\0 & 3 & 5\\3 & 2 & 5\\5 & 2 & 4\\2 & 0 & 4\\0 & 1 & 4
\end{bmatrix}
\]
as input to PyDEC. Abstract simplicial complexes need not be a
triangulation of a manifold. For example one consisting of 2 triangles
with an extra edge attached and a standalone vertex may be created
using a list consisting of the arrays
\[
\begin{bmatrix}
  5
\end{bmatrix}\qquad
\begin{bmatrix}
  1 & 4
\end{bmatrix}\qquad
\begin{bmatrix}
  0 & 1 & 2\\ 1 & 2 & 3
\end{bmatrix}
\]
as input.

The boundary matrices of a simplicial complex encode the connectivity
information and can be computed from a purely combinatorial
description of a simplicial complex. The locations of the vertices are
not required. Thus the abstract simplicial complex structure is all
that is required to compute these matrices as will be described in the
next section.

\section{Discrete Exterior Derivative}
\label{sec:exterior_derivative}

Given a manifold $M$, the exterior derivative $\d : \Omega^p(M) \to
\Omega^{p+1}$ which acts on differential $p$-forms, generalizes the
derivative operator of calculus. When combined with metric dependent
operators Hodge star, sharp, and flat appropriately, the vector
calculus operators div, grad and curl can be generated from $\d$. But
$\d$ itself is a metric independent operators whose definition does
not require any Riemannian metric on the
manifold. See~\cite{AbMaRa1988} for details. The discrete exterior
derivative (which we will also denote as $\dd$) in PyDEC is defined as
is usual in the literature, as the coboundary operator of algebraic
topology~\cite{Munkres1984}. Thus
\begin{equation*}
 \eval{\dd_p a}{c} = \eval{a}{\boundary_{p+1} c}\, ,
\end{equation*}
for arbitrary $p$-cochain $a$ and $(p+1)$-chain $c$. Recall that the
boundary operator on cochains, $\boundary_p : C_p(K) \to C_{p-1}(K)$
is defined by extension of its definition on an oriented simplex.
The boundary operator on a $p$-simplex $\sigma^p = \simplex{p}$ is
given in terms of its $(p-1)$-dimensional faces ($p+1$ in number) as
\begin{equation}\label{eq:boundary}
  \boundary_p \sigma^p = \sum_{i=0}^p 
  (-1)^i \bigl[v_0,\ldots,\widehat{v_i},\ldots,v_p\bigr]\, ,
\end{equation}
where $\widehat{v_i}$ means that $v_i$ is omitted.  Therefore, given
an $n$-dimensional simplicial complex represented by
$\simplexarray_0,\dots,\simplexarray_n$, for the discrete exterior
derivative, it suffices to compute $\boundary_0, \dots, \boundary_n$.
As is usual in algebraic topology, in PyDEC we compute matrix
representations of these in the elementary chain basis. Boundary
matrices are useful in finite elements since their transposes are the
coboundary operators. They are also useful in computational topology
since homology and cohomology groups are the quotient groups of kernel
and image of boundary matrices~\cite{Munkres1984}.

For the complex pictured in Figure~\ref{fig:mesh_example_oriented}
the boundary operators are
\begin{align*}
\boundary_0 = \begin{bmatrix}
0 & 0 & 0 & 0 & 0
\end{bmatrix}\, ,
&&
\boundary_1 = \begin{bmatrix}
-1 & -1 &  \p0 &  \p0 &  \p0 &  
\p0 &  \p0\\
\p1  &  \p0 & -1 & -1 &  \p0 &
\p0 &  \p0\\
\p0  &  \p0 &  \p1 &  \p0 & 
-1 & -1 &  \p0\\
\p0  &  \p1 &  \p0 &  \p1 &  
\p1 &  \p0 & -1\\
\p0  &  \p0 &  \p0 &  \p0 &  
\p0 &  \p1 &  \p1
\end{bmatrix}\, ,
\\
\boundary_2 = \begin{bmatrix}
 \p1 &  \p0 &  \p0\\
-1 &  \p0 &  \p0\\
 \p0 &  \p1 &  \p0\\
 \p1 & -1 &  \p0\\
 \p0 &  \p1 & -1\\
 \p0 &  \p0 &  \p1\\
 \p0 &  \p0 & -1
\end{bmatrix}.
&&
\end{align*}
In the following we describe an algorithm that takes as input
$\simplexarray_n$ and computes both $\simplexarray_{n-1}$ and
$\boundary_{n-1}$.  This procedure is applied recursively to produce
all faces of the complex and their boundary operator at that
dimension.

The first step of the algorithm converts a simplex array
$\simplexarray_n$ into a canonical format. In the canonical format
each simplex (row of $\simplexarray_n$) is replaced by the simplex
with sorted indices and the relative parity of the original ordering
to the sorted order.  For instance, the simplex $(1,3,2)$ becomes
$((1,2,3), -1)$ since an odd number of transpositions (namely one) are
needed to transform $(1,3,2)$ into $(1,2,3)$.  Similarly, the
canonical format for simplex $(2,1,4,3)$ is $((1,2,3,4),+1)$ since an
even number of transpositions are required to sort the simplex
indices.  Since the complex dimension $n$ is typically small (i.e. $<
10$), a simple sorting algorithm such as insertion sort is employed at
this stage.  We denote the aforementioned process
\texttt{canonical\_format} $( \simplexarray_n ) \to \simplexarray^+_n$
where the rightmost column of $\simplexarray^+_n$ contains the simplex
parity.  Applying \texttt{canonical\_format} to $\simplexarray_2$ in
our example yields
\begin{equation*}
\simplexarray_2 = \begin{bmatrix}
0 & 1 & 3 \\
1 & 2 & 3 \\
2 & 4 & 3
\end{bmatrix} 
\to
\begin{bmatrix}
0 & 1 & 3 & &  \p1 \\
1 & 2 & 3 & &  \p1\\
2 & 3 & 4 & & -1
\end{bmatrix} 
= \simplexarray^+_2
\end{equation*}

Once a simplex array $\simplexarray_n$ has been transformed into
canonical format, the $(n-1)$-dimensional faces $\simplexarray_{n-1}$
and boundary operator $\boundary_n$ are readily obtained.  We denote
this process 
\[
\texttt{boundary\_faces}(\simplexarray^+_n) \to
\simplexarray_{n-1},\boundary_n \, .
\]
In order to establish the correspondence between the $n$-dimensional
simplices and their faces, we first enumerate the simplices by adding
another column to $\simplexarray^+_n$ to form $\simplexarray^{++}_n$.
For example,
\begin{equation*}
\simplexarray^+_2 = \begin{bmatrix}
0 & 1 & 3 & &  \p1 \\
1 & 2 & 3 & &  \p1\\
2 & 3 & 4 & & -1
\end{bmatrix} 
\to
\simplexarray^{++}_2 = \begin{bmatrix}
0 & 1 & 3 & &  \p1 & & 0 \\
1 & 2 & 3 & &  \p1 & & 1 \\
2 & 3 & 4 & & -1 & & 2 
\end{bmatrix}.
\end{equation*}
The formula~\eqref{eq:boundary} is applied to $\simplexarray^{++}_n$
in a columnwise fashion by excluding the $i$-th column of simplex
indices, multiplying the parity column by $(-1)^i$, and carrying the
last column over unchanged.  For example,
\begin{equation*}
\simplexarray^{++}_2 = \begin{bmatrix}
0 & 1 & 3 & &  \p1 & & 0 \\
1 & 2 & 3 & &  \p1 & & 1 \\
2 & 3 & 4 & & -1 & & 2
\end{bmatrix}
\to
\begin{bmatrix}
1 & 3 & &  \p1 & & 0 \\
2 & 3 & &  \p1 & & 1 \\
3 & 4 & & -1 & & 2 \\
0 & 3 & & -1 & & 0 \\
1 & 3 & & -1 & & 1 \\
2 & 4 & &  \p1 & & 2 \\
0 & 1 & &  \p1 & & 0 \\
1 & 2 & &  \p1 & & 1 \\
2 & 3 & & -1 & & 2
\end{bmatrix}
\end{equation*}
The resultant array is then sorted by the first $n$ columns in
lexicographical order, allowing the unique faces to then be extracted.
\begin{equation*}
\begin{bmatrix}
1 & 3 & &  \p1 & & 0 \\
2 & 3 & &  \p1 & & 1 \\
3 & 4 & & -1 & & 2 \\
0 & 3 & & -1 & & 0 \\
1 & 3 & & -1 & & 1 \\
2 & 4 & &  \p1 & & 2 \\
0 & 1 & &  \p1 & & 0 \\
1 & 2 & &  \p1 & & 1 \\
2 & 3 & & -1 & & 2
\end{bmatrix}
\to
\begin{bmatrix}
0 & 1 & &  \p1 & & 0 \\
0 & 3 & & -1 & & 0 \\
1 & 2 & &  \p1 & & 1 \\
1 & 3 & &  \p1 & & 0 \\
1 & 3 & & -1 & & 1 \\
2 & 3 & &  \p1 & & 1 \\
2 & 3 & & -1 & & 2 \\
2 & 4 & &  \p1 & & 2 \\
3 & 4 & & -1 & & 2 \\
\end{bmatrix}
\to
\begin{bmatrix}
0 & 1 \\
0 & 3 \\
1 & 2 \\
1 & 3 \\
2 & 3 \\
2 & 4 \\
3 & 4 \\
\end{bmatrix}.
\end{equation*}

Furthermore, a Compressed Sparse Row (CSR)~\cite{Saad2003} sparse
matrix representation of $\boundary_n$ as
\[
\boundary_n = (\texttt{ptr},\texttt{indices},\texttt{data})
\] 
is obtained from the sorted matrix.  For $\boundary_2$ these are
\begin{align*}
\texttt{ptr} &=
\begin{bmatrix}
0 & 1 & 2 & 3 & 5 & 7  & 8 & 9
\end{bmatrix}\, , \\
\texttt{indices} &=
\begin{bmatrix}
\p0 & \p0 & \p1 & \p0 & \p1 & \p1 & \p2 & \p2 & \p2  
\end{bmatrix}\, , \\
\texttt{data} &=
\begin{bmatrix}
\p1 & -1 & \p1 & \p1 & -1 & \p1 & -1 & \p1 & -1
\end{bmatrix}\,
\end{align*}
where \texttt{indices} and \texttt{data} correspond to the fourth and
third rows of the sorted matrix.

This process is then applied to $\simplexarray_{n-1}$ and so on down
the dimension. Since the lower dimensional simplex array rows are
already sorted, those arrays are already in canonical format. 
Thus a single algorithm generates the lower dimensional faces as well
as the boundary matrices at all the dimensions. The boundary matrices,
and hence the coboundary operators, are generated in a convenient
sparse matrix format.

\subsection{Generalization to Abstract Complexes}

The boundary operators and faces of an abstract simplicial complex are
computed with a straightforward extension of the
\texttt{boundary\_faces} algorithm.  Recall from
Section~\ref{sec:abstrct_smplcl_cmplx} that an abstract simplicial
complex is specified by a \emph{list} of simplex arrays of different
dimensions, where the lower-dimensional simplex arrays represent
simplices that are not a face of any higher-dimensional simplex.
Generalizing the previous scheme to the case of abstract simplicial
complexes is accomplished by (1) augmenting the set of computed faces
with the user-specified simplices and (2) modifying the computed
boundary operator accordingly.

Consider the abstract simplicial complex represented by the simplex
arrays
\[
\simplexarray_0 = [5] \qquad\qquad
\simplexarray_1 = [1, 4] \qquad\qquad
\simplexarray_2 = \begin{bmatrix} 0 & 1 & 2 \\ 1 & 2 &3 \end{bmatrix}
\]
which consists of two triangles, an edge, and an isolated vertex.
Applying \texttt{boundary\_faces} to $\simplexarray_{2}$ produces
an array of face edges and corresponding boundary operator
\[
\texttt{boundary\_faces}(\simplexarray_2) \to 
\simplexarray_{1},\boundary_2 \ = 
\begin{bmatrix}
 0 & 1 \\
 0 & 2 \\
 1 & 2 \\
 1 & 3 \\
 2 & 3 \\
\end{bmatrix}, 
\begin{bmatrix}
  \p1 & \p0 \\
   -1 & \p0 \\
  \p1 & \p1 \\
  \p0 &  -1 \\
  \p0 & \p1 \\
\end{bmatrix},
\]
which includes all but the user-specified edge $[1,4]$.
User-specified simplices are then incorporated into the simplex array
in a three-stage process:
\begin{inparaenum}[(1)]
\item user-specified simplices are concatenated to the computed face
  array;
\item the rows of the combined simplex array are sorted
  lexicographically; 
\item redundant simplices (if any) are removed from the sorted array.
\end{inparaenum}
Upon completion, the augmented simplex array contains the union of the
face simplices and the user-specified simplices.  Continuing the
example, the edge $[1,4]$ is incorporated into $\simplexarray_1$ as
follows
\[
\begin{bmatrix}
 0 & 1 \\
 0 & 2 \\
 1 & 2 \\
 1 & 3 \\
 2 & 3 \\
\end{bmatrix}, 
\begin{bmatrix}
 1 & 4 \\
\end{bmatrix}
\to
\begin{bmatrix}
 0 & 1 \\
 0 & 2 \\
 1 & 2 \\
 1 & 3 \\
 2 & 3 \\
 1 & 4 \\
\end{bmatrix}
\to
\begin{bmatrix}
 0 & 1 \\
 0 & 2 \\
 1 & 2 \\
 1 & 3 \\
 1 & 4 \\
 2 & 3 \\
\end{bmatrix}
\to
\begin{bmatrix}
 0 & 1 \\
 0 & 2 \\
 1 & 2 \\
 1 & 3 \\
 1 & 4 \\
 2 & 3 \\
\end{bmatrix} = \simplexarray_1
\]

In the final stage of the procedure, the computed boundary operator
($\boundary_2$ in the example) is updated to reflect the newly
incorporated simplices.  Since the new simplices do not lie in the
boundary of any higher-dimensional simplex, we may simply add empty
rows into the sparse matrix representation of the boundary operator
for each newly added simplex.  Therefore, the boundary operator update
procedure amounts to a simple remapping of row indices.  In the
example, the addition of the edge $[1,4]$ into the fifth row of the
simplex array requires the addition of an empty row into the boundary
operator at the corresponding position,
\[
\begin{bmatrix}
  \p1 & \p0 \\
   -1 & \p0 \\
  \p1 & \p1 \\
  \p0 &  -1 \\
  \p0 & \p1 \\
\end{bmatrix}
\to
\begin{bmatrix}
  \p1 & \p0 \\
   -1 & \p0 \\
  \p1 & \p1 \\
  \p0 &  -1 \\
  \p0 & \p0 \\
  \p0 & \p1 \\
\end{bmatrix} = \boundary_2.
\]

The Rips complex of Section~\ref{sec:rips_complex} does have the
location information for the vertices. However, ignoring those, such a
complex is an abstract simplicial complex. Thus the boundary matrices
for a Rips complex can be computed as described above. In practice
some efficiency can be obtained by ordering the computation
differently, so that the matrices are built as the complex is being
built from the edge skeleton of the Rips complex. That is how it is
implemented in PyDEC.
                                                                      
\subsection{Boundary operators and faces for cubical complexes}
\label{subsec:cbbndry}

The algorithm used to compute the faces and boundary operator of a
given cube array ($\cubearray_p \rightarrow \cubearray_{p-1},
\boundary_p$) is closely related to the procedure discussed in
Section~\ref{sec:exterior_derivative} for simplex arrays.  Consider a
general $p$-cube in $n$-dimensions, denoted by the pair
$\bigl[(c_0,\ldots, c_{n-1}) (d_0,\ldots,d_{p-1})\bigr]$
where $(c_0,\ldots, c_{n-1})$ are the coordinates of cube's origin and
$(d_0,\ldots,d_{p-1})$ are the directions which the $p$-cube spans.
Note that the values $[c_0, \ldots, c_{n-1}, d_0, \ldots, d_{p-1}]$
correspond exactly to a row of the cube array representation
introduced in Section~\ref{sec:cube_complex}.  Using this notation,
the boundary of a $p$-cube is given by the expression
\begin{multline}\label{eq:cube_boundary}
 \boundary_p \bigl[(c_0,\ldots, c_{n-1}) (d_0,\ldots,d_{p-1})\bigr] = 
  \sum_{i=0}^{p-1} (-1)^i \bigl(\bigl[(c_0, \ldots, c_{d_i} + 1,
  \ldots, 
  c_{n-1}) (d_0,\ldots,\widehat{d_i},\ldots,d_{p-1})\bigr] 
  - \\
  \bigl[(c_0, \ldots, c_{d_i} + 0, \ldots, c_{n-1}) 
  (d_0,\ldots,\widehat{d_i},\ldots,d_{p-1})\bigr]\bigr)\,
\end{multline}
where $\widehat{d_i}$ denotes the omission of the $i$-th spanning
direction and $c_{d_i}$ is the corresponding coordinate.  For example,
the boundary of a square centered at the location $(10,20)$ is
\begin{equation}
  \boundary_2 \bigl[(10, 20) (0, 1)\bigr] = 
   \bigl[(11,20) (1)\bigr] - \bigl[(10,20) (1)\bigr] - 
   \bigl[(10,21) (0)\bigr] + \bigl[(10,20) (0)\bigr].
\end{equation}

The canonical format for a $p$-cube is the one where the spanning
directions are specified in ascending order.  For instance, the
$2$-cube $\bigl[(10, 20) (0, 1)\bigr]$ is in the canonical format
because $d_0 < d_1$.  As with simplices, each cube has a unique
canonical format, through which duplicates are easily identified.
Since the top-level cube array $\cubearray_n$ is generated from a
bitmap it is already in the canonical format and no reordering of
indices or parity tracking is necessary.

Applying Equation~\ref{eq:cube_boundary} to a $p$-cube array with $N$
members generates a collection $2 N$ oriented faces.  In the mesh
illustrated in Figure~\ref{fig:cube_mesh} the three squares in
$\cubearray_2$ are initially expanded into
\[
\cubearray_2 = 
\begin{bmatrix}
  0 & 0 & & 0 & 1 \\
  1 & 0 & & 0 & 1 \\
  1 & 1 & & 0 & 1
\end{bmatrix}
\to
\begin{bmatrix}
0 & 0 & & 1 & &  -1 & & 0\\
1 & 0 & & 1 & &  -1 & & 1\\
1 & 1 & & 1 & &  -1 & & 2\\
1 & 0 & & 1 & & \p1 & & 0\\
2 & 0 & & 1 & & \p1 & & 1\\
2 & 1 & & 1 & & \p1 & & 2\\
0 & 0 & & 0 & & \p1 & & 0\\
1 & 0 & & 0 & & \p1 & & 1\\
1 & 1 & & 0 & & \p1 & & 2\\
0 & 1 & & 0 & &  -1 & & 0\\
1 & 1 & & 0 & &  -1 & & 1\\
1 & 2 & & 0 & &  -1 & & 2\\
\end{bmatrix} = \cubearray_1^+
\]
where the fourth column of $\cubearray_1^+$ encodes the orientation of
the face and the fifth column records the $2$-cube to which each face
belongs.  Sorting the rows of $\cubearray_1^+$ in lexicographical
order
\[
\begin{bmatrix}
0 & 0 & & 1 & &  -1 & & 0\\
1 & 0 & & 1 & &  -1 & & 1\\
1 & 1 & & 1 & &  -1 & & 2\\
1 & 0 & & 1 & & \p1 & & 0\\
2 & 0 & & 1 & & \p1 & & 1\\
2 & 1 & & 1 & & \p1 & & 2\\
0 & 0 & & 0 & & \p1 & & 0\\
1 & 0 & & 0 & & \p1 & & 1\\
1 & 1 & & 0 & & \p1 & & 2\\
0 & 1 & & 0 & &  -1 & & 0\\
1 & 1 & & 0 & &  -1 & & 1\\
1 & 2 & & 0 & &  -1 & & 2\\
\end{bmatrix} 
\to
\begin{bmatrix}
0 & 0 & & 0 & \p1 & & 0\\
0 & 0 & & 1 &  -1 & & 0\\
0 & 1 & & 0 &  -1 & & 0\\
1 & 0 & & 0 & \p1 & & 1\\
1 & 0 & & 1 &  -1 & & 1\\
1 & 0 & & 1 & \p1 & & 0\\
1 & 1 & & 0 & \p1 & & 2\\
1 & 1 & & 0 &  -1 & & 1\\
1 & 1 & & 1 &  -1 & & 2\\
1 & 2 & & 0 &  -1 & & 2\\
2 & 0 & & 1 & \p1 & & 1\\
2 & 1 & & 1 & \p1 & & 2\\
\end{bmatrix}
\to
\begin{bmatrix}
  0 & 0 & & 0 \\
  0 & 0 & & 1 \\
  0 & 1 & & 0 \\
  1 & 0 & & 0 \\
  1 & 0 & & 1 \\
  1 & 1 & & 0 \\
  1 & 1 & & 1 \\
  1 & 2 & & 0 \\
  2 & 0 & & 1 \\
  2 & 1 & & 1
\end{bmatrix}
=
\cubearray_1
\]
allows the unique faces to be extracted.  Lastly, a sparse matrix
representation of the boundary operator is obtained from the sorted
cube array in the same manner as for simplices.

\section{Review of Whitney Map and Whitney Forms}
\label{sec:whitney}

In this section we review and collect some material, most of which is
well-known in DEC and finite element exterior calculus research
communities. It is included here partly to fix notation. In this
section we also give the monomials based definition of inner product
of differential forms. This is not the way inner product of forms is
usually defined in most textbooks, \cite{Morita2001} being one
exception we know of. The monomial form leads to an efficient
algorithm for computation of stiffness and mass matrices for Whitney
forms given in Section~\ref{sec:whtny_innrprdct}.

The basic function spaces that are useful with exterior calculus are
the space of square integrable $p$-forms on a manifold and Sobolev
spaces derived from that. Let $M$ be a \emph{Riemannian manifold}, a
manifold on which a smoothly varying inner product is defined on the
tangent space at each point.  Let $g$ be its \emph{metric}, a smooth
tensor field that defines the inner product on the tangent space at
each point on $M$.

For differential forms on such a manifold $M$, the space of square
integrable forms is denoted $L^2 \Omega^p(M)$. One can then define the
spaces $H\Omega^p(M)$ which generalize the spaces $H(\div)$ and
$H(\curl)$ used in mixed finite element methods~\cite{ArFaWi2010}. To
define $L^2 \Omega^p(M)$ one has to define an inner product on the
space of forms which is our starting point for this section. All these
function spaces have been discussed in~\cite{ArFaWi2006}
and~\cite{ArFaWi2010}. The definitions and properties of Whitney map
and Whitney forms is in \cite{Dodziuk1976}, the geometric analysis
background is in~\cite{Jost2005} and the basic definition of inner
products on forms is in~\cite[page 411]{AbMaRa1988}.

\subsection{Inner product of forms}
To define the spaces $L^2\Omega^p(M)$ and $H\Omega^p(M)$ more
precisely, we recall the definitions related to inner products of
forms. We will need the exterior calculus operators wedge product and
Hodge star which we recall first. For a manifold $M$ the \emph{wedge
  product} $\wedge: \Omega^p(M) \times \Omega^q(M) \to
\Omega^{p+q}(M)$ is an operator for building $(p+q)$-forms from
$p$-forms and $q$-forms. It is defined as the skew-symmetrization of
the tensor product of the two forms involved. For a Riemannian
manifold of dimension $n$, the Hodge star operator $\hodge:\Omega^p(M)
\to \Omega^{n-p}(M)$ is an isomorphism between the spaces of $p$ and
$(n-p)$-forms. For more details, see \cite[page 394]{AbMaRa1988} for
wedge products and \cite[page 411]{AbMaRa1988} for Hodge star.

\begin{definition}
  \label{def:wdghdg_ip}
  Given two smooth $p$-forms $\alpha$, $\beta \in \Omega^p(M)$ on a
  Riemannian manifold $M$, their \emph{pointwise inner product} at
  point $x \in M$ is defined by
  \begin{equation} \label{eq:wdghdg_ip}
  \ainnerproduct{\alpha(x)}{\beta(x)}\,\mu = \alpha(x) \wedge \hodge
  \beta(x) \, ,
  \end{equation}
  where $\mu = \hodge 1$ is the volume form associated with the metric
  induced by the inner product on $M$. 
\end{definition}

The pointwise inner products of forms can be defined in another way,
which will be more useful to us in computations. The second definition
given below in Definition~\ref{def:monomials_ip} is equivalent to the
one given above in Definition~\ref{def:wdghdg_ip}. The operator
$\sharp$ (the \emph{sharp} operator) used below is an isomorphism
between 1-forms and vector fields and is defined by $g(\alpha^\sharp,
X) = \alpha(X)$ for given 1-form $\alpha$ and all vector fields
$X$. See~\cite{AbMaRa1988} for details.

\begin{definition}
\label{def:monomials_ip}
Let $\alpha_1,\dots,\alpha_p$ and $\beta_1,\dots,\beta_p$ be 1-forms
on a Riemannian manifold $M$. By analogy with polynomials we'll call
$p$-forms of the type $\alpha_1 \wedge \dots \wedge \alpha_p$ and
$\beta_1 \wedge \dots \wedge \beta_p$ \emph{monomial
  $p$-forms}. Define the following operator at a point $x \in M$:
\begin{equation} \label{eq:monomials_ip}
\aInnerproduct{\alpha_1 \wedge\dots\wedge\alpha_p}
{\beta_1\wedge\dots\wedge\beta_p} := \det\bigl[
  g\bigl(\alpha_i^\sharp,\beta_j^\sharp\bigr)\bigr]\, ,
\end{equation}
where $\bigl[g\bigl(\alpha_i^\sharp,\beta_j^\sharp\bigr)\bigr]$ is the
matrix obtained by taking $1 \le i,j \le p$.  In the equation above,
all the 1-forms are evaluated at the point $x \in M$. Extend the
operation in \eqref{eq:monomials_ip} bilinearly pointwise to the space
of all $p$-forms. It can be shown that this defines a \emph{pointwise
  inner product} of $p$-forms equivalent to the one defined in
\eqref{eq:wdghdg_ip}. Note that if $\alpha_i = \beta_i$ for all $i$,
the expression on the right in~\eqref{eq:monomials_ip} is the Gram
determinant.
\end{definition}

\begin{remark}\label{rem:whch_ip_dfntn}
  Note that unlike~\eqref{eq:wdghdg_ip} the definition
  in~\eqref{eq:monomials_ip} does not involve wedge product and Hodge
  star explicitly. This is an advantage of the latter form since a
  discrete wedge product is not available in PyDEC.  The RHS
  of~\eqref{eq:monomials_ip} does involve the sharp operator, but as
  we will see in the next section, this is easy to interpret for the
  purpose of discretization in this context.
\end{remark}

\begin{definition} \label{def:L2innrprdctfrms} The pointwise
  innerproduct in~\eqref{eq:wdghdg_ip}, or equivalently
  in~\eqref{eq:monomials_ip}, induces an $L^2$ \emph{inner product on
    $M$} as
  \begin{equation} \label{eq:forms_ip}
  \pinnerproduct{\alpha}{\beta}_{L^2} = 
  \int_M \ainnerproduct{\alpha(x)}{\beta(x)}\,\mu \, .
  \end{equation}
  The space of $p$-forms obtained by completion of $\Omega^p(M)$ under
  this inner product is the Hilbert space of \emph{square integrable
    $p$-forms} $L^2\Omega^p(M)$.  The other useful space mentioned at
  the beginning of this section is the Sobolev space $H \Omega^p(M) :=
  \{\alpha \in L^2 \Omega^p(M) \;|\; \d\alpha \in L^2
  \Omega^{p+1}(M)\}$.
\end{definition}

\subsection{Whitney map and Whitney forms}
Let $K$ be an $n$-dimensional manifold simplicial complex embedded in
$\R^N$ and $\abs{K}$ the underlying space.  The metric on the
interiors of the top dimensional simplices of $K$ will be the one
induced from the embedding Euclidean space $\R^N$. As is usual in
finite element methods, finite dimensional subspaces of the function
spaces described in the previous paragraph are used in the numerical
solution of PDEs. The finite dimensional spaces can be obtained by
``embedding'' the space of cochains into these spaces by using an
interpolation. For example, to embed $C^p(K;\R)$ into $\Lforms2p$ one
can use the Whitney map $\whitney:C^p(K;\R)\to \Lforms2p$, which will
be reviewed in this subsection. The image
$\whitney\bigl(C^p(K;\R)\bigr)$ is a linear vector subspace of
$\Lforms2p$ and is the space of \emph{Whitney $p$-forms}
\cite{Whitney1957,Dodziuk1976,Bossavit1988a} and is denoted
$\mathcal{P}_1^- \Omega^p(\abs{K})$ in~\cite{ArFaWi2009,
  ArFaWi2010}. (We use $\Omega^p$ instead of $\Lambda^p$ used
in~\cite{ArFaWi2010}.) The embedding of cochains is analogous to how
scalar values at discrete sample points would be interpolated to get a
piecewise affine function. In the scalar case also, the space of such
functions is a vector subspace of square integrable functions.  In
fact, $\whitney\bigl(C^0(K;\R)\bigr)$, the space of Whitney $0$-forms
is the space of continuous piecewise affine functions on
$\abs{K}$. The Whitney map for $p>0$ is actually built from
barycentric coordinates which are the building blocks of piecewise
linear interpolation.  Thus the embedding of $C^p(K;\R)$ into
$\Lforms2p$ for $p>0$ can be considered to be a generalization of the
embedding of $C^0(K;\R)$ into $\Lforms20$. Thus Whitney forms enable
only low order methods. However, arbitrary degree polynomial spaces
suitable for use in finite element exterior calculus have been
discovered~\cite{ArFaWi2006, ArFaWi2009}. These however are not yet a
part of PyDEC.

The space of Whitney $p$-forms is the space of piecewise smooth
differential $p$-forms obtained by applying the Whitney map to
$p$-cochains. It can be thought of as a method for interpolating
values given on $p$-simplices of a simplicial complex. For example,
inside a tetrahedron Whitney forms allow the interpolation of numbers
on edges or faces to a smooth 1-form or 2-form respectively.  As
mentioned above, for 0-cochains, i.e. scalar functions sampled at
vertices, the interpolation is the one obtained using the standard
scalar piecewise affine basis functions on each simplex, that is the
barycentric coordinates corresponding to each vertex of the
simplex. We recall the definition of barycentric coordinates followed
by the definition of the Whitney map.

\begin{definition}\label{def:barycentric_coords}
  Let $\sigma^p = \simplex{p}$ be a $p$-simplex embedded in
  $\R^N$. The affine functions $\mu_i:\R^N \to \R$, $i=0,\ldots,p$,
  which when restricted to $\sigma^p$ take the value 1 on vertex $v_i$
  and 0 on the others, are called the \emph{barycentric coordinates}
  in $\sigma^p$.
\end{definition}

\begin{definition}
  Let $\sigma^p$ be an oriented $p$-simplex $[v_{i_0},\ldots,v_{i_p}]$
  in an $n$-dimensional manifold complex $K$, and
  $\bigl(\sigma^p\bigr)^\ast$ the corresponding elementary
  $p$-cochain.  We define
  \begin{equation} \label{eq:whitney_map}
  \whitney\left(\left(\sigma^p\right)^{\ast}\right) :=
  p! \sum_{k=0}^p (-1)^k \mu_{i_k}\,d\mu_{i_0}\wedge\cdots
  \wedge\widehat{d\mu_{i_k}}\wedge\cdots\wedge d\mu_{i_p}\, ,
  \end{equation}
  where $\mu_{i_{k}}$ is the barycentric coordinate function with
  respect to vertex $v_{i_{k}}$ and the notation
  $\widehat{d\mu_{i_{k}}}$ indicates that the term $d\mu_{i_{k}}$ is
  omitted from the wedge product.  The \emph{Whitney map}
  $\whitney:C^p(K;\R)\to\Lforms2p$ is the above map $\whitney$
  extended to all of $C^p(K;\R)$ by requiring that $\whitney$ be a
  linear map.  $\whitney(\left(\sigma^p\right)^{\ast})$ is called the
  \emph{Whitney form} corresponding to $\sigma^p$, and for a general
  cochain $c$, $\whitney(c)$ is called the Whitney form
  corresponding to $c$.
\end{definition}

For example, the Whitney form corresponding to the edge $[v_0,v_1]$ is
$\whitney([v_0,v_1]^\ast) = \mu_0 \d \mu_1 - \mu_1 \d \mu_0 \, , $ and
the Whitney form corresponding to the triangle $[v_1, v_2, v_3]$ in a
tetrahedron $[v_0, v_1, v_2, v_3]$ is 
\[
\whitney([v_1, v_2, v_3]^\ast)
= 2\,(\mu_1 \d \mu_2 \wedge \d \mu_3 - \mu_2 \d \mu_1 \wedge \d \mu_3
+ \mu_3 \d \mu_1 \wedge \d \mu_2)\, .
\]

\begin{remark}\label{rem:mnml_cmbntns}
If we were using local coordinate charts on a manifold then at any
point a $p$-form would be a linear combination of monomials.  Note
from \eqref{eq:whitney_map} that the Whitney form
$\whitney(\sigma^\ast)$ is a sum of monomials with
coefficients. Thus Whitney forms allow us to treat forms at a point as
a linear combination of monomials even though we are not using local
coordinate charts.
\end{remark}

We emphasize again that this section was a review of known
material. We have tried to present this material in a manner which
makes it easier to explain the examples of Section~\ref{sec:examples}
and the construction of mass matrix for Whitney forms described in the
next section.

\section{Whitney Inner Product of Cochains}
\label{sec:whtny_innrprdct}

Given a manifold simplicial complex $K$, an inner product between two
$p$-cochains $a$ and $b$ can be defined by first embedding these
cochains into $\Lforms2p$ using Whitney map and then taking the $L^2$
inner product of the resulting Whitney forms \cite{Dodziuk1976}. 

\begin{definition}\label{def:whtnyip}
Given two $p$-cochains $a,b \in C^p(K;\R)$, their \emph{Whitney inner
  product} is defined by 
\begin{equation}\label{eq:cochains_ip}
  \pinnerproduct{a}{b} := \pinnerproduct{\whitney a}{\whitney b}_{L^2}
  =  \int_{\abs{K}} \ainnerproduct{\whitney a}{\whitney b} \mu\, ,
\end{equation}
using the $L^2$ inner product on forms given
in~\eqref{eq:forms_ip}. The matrix for Whitney inner product of
$p$-forms in the elementary $p$-cochain basis will be denoted $M_p$.
That is, $M_p$  is a square matrix of order $N_p$ (the number of
$p$-simplices in $K$) such that the entry in row $i$ and column $j$ is 
$M_p(i,j) = \pInnerproduct{(\sigma_i^p)^\ast}{(\sigma_j^p)^\ast}$,
where $(\sigma_i^p)^\ast$ and $(\sigma_j^p)^\ast$ are the elementary
$p$-cochains corresponding to the $p$-simplices $\sigma_i^p$ and
$\sigma_j^p$ with index number $i$ and $j$ respectively. 
\end{definition}
The integral in~\eqref{eq:cochains_ip} is the sum of integrals over
each top dimensional simplex in $K$. Inside each such simplex the
inner product of smooth forms applies since the Whitney form in each
simplex is smooth all the way up to the boundary of the simplex. The
interior of each top dimensional simplex is given an inner product
that is induced from the standard inner product of the embedding space
$\R^N$.

\begin{remark}
Given cochains $a,b\in C^p(K;\R)$ we will refer to their
representations in the elementary cochain basis also as $a$ and
$b$. Then the matrix representation of the Whitney inner product of
$a$ and $b$ is $\pinnerproduct{a}{b} = a^T M_p b$.
\end{remark}

The inner product of cochains defined in this way is a key concept
that connects exterior calculus to finite element methods and
different choices of the inner product lead to different
discretizations of exterior calculus. This is because the inner
product matrix $M_p$ is the mass matrix of finite element methods
based on Whitney forms. The details of the efficient computation of
$M_p$ for any $p$ and $n$ will be given in Section~\ref{subsec:mssmtrx}
and~\ref{subsec:mssmtrx_algrthm}.

Recall that for a Riemannian manifold $M$, if $\codiff_{p+1} :
\Omega^{p+1}(M) \to \Omega^p(M)$ is the codifferential, then the
Laplace-deRham operator on $p$-forms is $\laplacian_p :=
\d_{p-1}\codiff_p + \codiff_{p+1} \d_p$. For a boundaryless $M$ the
codifferential $\codiff_{p+1}$ is the adjoint of the exterior
derivative $d_p$. In case $M$ has a boundary, we have instead that
\begin{equation} \label{eq:adjoint}
\pinnerproduct{\d_p\alpha}{\beta} =
\pinnerproduct{a}{\codiff_{p+1}\beta} + \int_{\partial
  M}\alpha\wedge\hodge\beta \, .
\end{equation}
See \cite[Exercise 7.5E]{AbMaRa1988} for a derivation of the
above. Now consider Poisson's equation $\laplacian_p u = f$ on
$p$-forms defined on a $p$-dimensional simplicial manifold complex
$K$. For simplicity, we'll consider the weak form of this using smooth
forms rather than Sobolev spaces of forms. See~\cite{ArFaWi2006,
  ArFaWi2010} for a proper functional analytic treatment. We will also
assume that the correct boundary conditions are satisfied, so that the
boundary term in~\eqref{eq:adjoint} is 0. In our simple treatment, the
weak form of the Poisson's equation is to find a $u \in
\Omega^p(\abs{K})$ such that $\pinnerproduct{\codiff_p u}{\codiff_p
  v}_{L^2} + \pinnerproduct{\d_p u}{\d_p v}_{L^2} =
\pinnerproduct{f}{v}_{L^2}$.  Thus it is clear that a Galerkin
formulation using Whitney forms $\mathcal{P}_1^- \Omega^p$ will
require the computation of a term like $\pinnerproduct{\d_p \whitney
  a}{\d_p \whitney b}_{L^2}$ for cochains $a$ and $b$. By the
commuting property of Whitney forms $\d_p W = W \d_p$ (where the
second $\d_p$ is the coboundary operator on cochains) we have that the
above inner product is equal to $\pinnerproduct{\whitney \d_p
  a}{\whitney \d_p b}_{L^2}$. (See~\cite{Dodziuk1976} for a proof of
the commuting property.) Now, by definition of the Whitney inner
product of cochains in~\eqref{eq:cochains_ip} this is equal to
$\pinnerproduct{\dd_p a}{\dd_p b}$ in the inner product on
$(p+1)$-cochains. The matrix form of this inner product can be
obtained from the mass matrix $M_{p+1}$ as $\d_p^T\, M_{p+1}\,
\d_p$. This is what we mean when we say that the stiffness matrix can
be computed easily from the mass matrix. The term on the right in the
weak form will use the mass matrix $M_p$. Since codifferential of
Whitney forms is 0, the first term in the weak form has to be handled
in another way, as described in~\cite{ArFaWi2010}.

\begin{remark}\label{rem:stffnss}
  Exploiting the aforementioned commutativity of Whitney forms to
  compute the stiffness matrix represents a significant simplification
  to our software implementation.  While computing the stiffness
  matrix directly from the definition is possible, it is a complex
  operation which requires considerable programmer effort, especially
  if the performance of the implementation is important.  In contrast,
  our formulation requires no additional effort and has the
  performance of the underlying sparse matrix-matrix multiplication
  implementation, an optimized and natively-compiled routine.  All of
  the complex indexing, considerations of relative orientation,
  mappings between faces and indices, etc. is reduced to a simple
  linear algebra expression. The lower dimensional faces are oriented
  lexicographically and the orientation information required in
  stiffness matrix assembly is implicit in the boundary matrices.
\end{remark}

\subsection{Computing barycentric differentials}
Given that the Whitney form $\whitney(\sigma^\ast)$ in
\eqref{eq:whitney_map} is built using wedges of differentials of
barycentric coordinates, it is clear that the algorithm for computing
an inner product of Whitney forms involves computation of the
gradients or differentials of the barycentric coordinates. The
following lemma shows how these are computed using simple linear
algebra operations.

\begin{lemma}\label{lem:barycentric_grads}
  Let $\sigma^p = \simplex{p}$ be a $p$-simplex embedded in $\R^N$, $p
  \le N$ where the vertices $v_i \in \R^N$ are given in some basis for
  $\R^N$. Let $X\in \R^{N \times p}$ be a matrix whose $j$-th column
  consists of the components of $\d \mu_j$ in the dual basis, for $j =
  1,\ldots,p$ . Let $V_0\in \R^{N \times p}$ be a matrix whose $j$-th
  column is $v_j - v_0$, for $j = 1,\ldots,p$. Then $X^T =
  \bigl(V_0^TV_0\bigr)^{-1}V_0^T = V_0^+$, the pseudoinverse of $V_0$.
\end{lemma}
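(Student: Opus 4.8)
The plan is to identify the barycentric coordinates $\mu_1,\ldots,\mu_p$ (restricted to the affine span of $\sigma^p$) with the affine-linear functions whose differentials $\d\mu_j$ are constant covectors, and then pin down those covectors by exploiting the defining property $\mu_j(v_i) = \delta_{ji}$. First I would note that on the $p$-dimensional affine subspace $\affine(\sigma^p)$ spanned by $v_0,\ldots,v_p$, each $\mu_j$ is affine, so $\d\mu_j$ is a constant element of the cotangent space; writing a point of the simplex as $x = v_0 + V_0 t$ for $t\in\R^p$ (using the columns $v_j-v_0$ of $V_0$ as a frame for the tangent directions), the chain rule gives that the directional derivative of $\mu_j$ along $v_i - v_0$ equals $\mu_j(v_i) - \mu_j(v_0) = \delta_{ji} - 0 = \delta_{ji}$ for $i=1,\ldots,p$, since $\mu_j(v_0)=0$ for $j\ge 1$.

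Next I would encode this as a matrix identity. The components of $\d\mu_j$ in the dual basis of $\R^N$ form the $j$-th column of $X \in \R^{N\times p}$, so the directional derivative of $\mu_j$ along $v_i - v_0$ is exactly the $(j,i)$ entry of $X^T V_0$. The computation in the previous step then says $X^T V_0 = I_p$, the $p\times p$ identity. However, $V_0$ is $N\times p$ with $p \le N$ and generally not square, so I cannot simply invert it; instead I would use that $V_0$ has full column rank $p$ (the $v_i$ being affinely independent, as $\sigma^p$ is a genuine $p$-simplex), hence $V_0^T V_0$ is an invertible $p\times p$ matrix. From $X^T V_0 = I_p$ I do not yet get $X^T$ uniquely — I must also use that the columns of $X$ (the differentials $\d\mu_j$) are "tangent to the simplex," i.e. lie in the column space of $V_0$. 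This is the one point needing a little care: the barycentric coordinate functions $\mu_j:\R^N\to\R$ in Definition~\ref{def:barycentric_coords} are only constrained by their values on $\sigma^p$, so there is an ambiguity in how they extend off $\affine(\sigma^p)$; the intended (and natural) convention, consistent with the metric on $\sigma^p$ being induced from $\R^N$, is that $\grad\mu_j$ lies in the tangent space of $\affine(\sigma^p)$, equivalently $X = V_0 A$ for some $A\in\R^{p\times p}$. I would state this convention explicitly.

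Given $X = V_0 A$, substituting into $X^T V_0 = I_p$ gives $A^T V_0^T V_0 = I_p$, so $A^T = (V_0^T V_0)^{-1}$ and therefore $X^T = A^T V_0^T = (V_0^T V_0)^{-1} V_0^T$, which is precisely the Moore–Penrose pseudoinverse $V_0^+$ of the full-column-rank matrix $V_0$. Finally I would remark that $\d\mu_0 = -\sum_{j=1}^p \d\mu_j$ follows from $\sum_{i=0}^p \mu_i \equiv 1$ on $\affine(\sigma^p)$, so the lemma indeed determines all $p+1$ barycentric differentials. The main obstacle is not any hard calculation but making the tangency convention for the extension of $\mu_j$ off the affine hull precise; once that is fixed, everything reduces to the identity $X^T V_0 = I_p$ plus the standard formula for the pseudoinverse.
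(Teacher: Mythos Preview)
Your argument is correct and lands on the same formula, but the organization differs from the paper's. The paper simply writes the defining relation $V_0\zeta = x - v_0$ for $\zeta = (\mu_1,\ldots,\mu_p)^T$, declares it a linear least squares system, takes $\zeta = V_0^+(x-v_0)$, and differentiates to read off $X^T = V_0^+$ in one stroke. You instead differentiate the defining conditions first to obtain $X^T V_0 = I_p$, then impose the tangency convention $X = V_0 A$ to pin down $X^T$ uniquely.

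The two routes encode the same content: your tangency hypothesis (that $\grad\mu_j$ lies in the column space of $V_0$) is exactly what the paper's least-squares choice does implicitly, since the pseudoinverse solution is characterized by the residual being orthogonal to $\operatorname{im} V_0$, equivalently by $\mu_j$ being constant in directions normal to $\affine(\sigma^p)$. Your version has the virtue of making this convention explicit, whereas the paper's proof hides it inside the phrase ``is the linear least squares system.'' Conversely, the paper's approach is shorter and avoids introducing the auxiliary matrix $A$.
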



\begin{proof}
  Let $\zeta = [\mu_1,\dots,\mu_p]^T$ be the vector of barycentric
  coordinates (other than $\mu_0$) with respect to $\sigma^p$, for a
  point $x = [x_1,\dots,x_N]^T \in \R^N$. Then by definition of
  barycentric coordinates and simplices, $V_0 \zeta = x - v_0$ is the
  linear least squares system for the barycentric coordinates. Thus
  $\zeta = V_0^+ (x -v_0)$ which implies that $\d \zeta = X^T =
  V_0^+$.
\end{proof}

\begin{remark}
  The use of normal equations in the solution of the least squares
  problem in the above proposition suffers from the well-known
  condition squaring problem. This is only likely to be a problem if
  the simplices are nearly degenerate. In that case one can just use
  an orthogonalization method to compute a QR factorization and use
  that to solve the least squares problem. Notice that in typical
  physical problems $V_0$ will typically be $2\times 2$, $3\times 2$
  or $3 \times 3$ matrix so any of these methods are easy to
  implement.
\end{remark}

Once the components for $\d \mu_i$ have been obtained for $i =
1,\dots,p$, the components of $\d \mu_0$ can be obtained by noting
that $\d \mu_0 + \cdots + \d \mu_p = 0$ which follows from the fact
that the barycentric coordinates sum to 1. Also note that the
components of the gradients $\nabla \mu_i$ will be the same as those
of $\d \mu_i$ if the standard metric of Euclidean space is used for
the embedding space $\R^N$ which is the case in all of PyDEC.

\subsection{Whitney inner product matrix} 
\label{subsec:mssmtrx}
We will now use the inner product of forms in~\eqref{eq:monomials_ip}
and the cochains inner product defined in~\eqref{eq:cochains_ip} to
give a formula for the computation of $M_p$, the Whitney inner product
matrix for $p$-cochains. We will also refer to this as the Whitney
mass matrix. 

\begin{notation}
  Given simplices $\sigma$ and $\tau$ the notation $\sigma \hasface
  \tau$ or $\tau \faceof \sigma$ means $\tau$ is a face of
  $\sigma$. Note that this means $\tau$ can be equal to $\sigma$ since
  any simplex is its own face. For proper inclusion we
  use $\tau \prprfaceof \sigma$ or $\sigma \hasprprface \tau$ to
  indicate that $\tau$ is a proper face of $\sigma$. The use of this
  notation simplifies the expression of summations over various classes
  of simplices in a complex. For example, given two fixed
  $p$-simplices $\sigma_i^p$ and $\sigma_j^p$
  \[
  \underset{\sigma^n \hasface \sigma^p_i, \sigma^p_j}
  {\sum_{\sigma^n}} 
  \]
  is read as ``sum over all $n$-simplices $\sigma^n$ which have
  $\sigma_i^p$ and $\sigma_j^p$ as faces'' .  Another notation used in
  the proof of the proposition below is the \emph{star} of a simplex
  $\sigma$, written $\St(\sigma)$ (not to be confused with the dual
  star $\star \sigma$). This star $\St(\sigma)$ is the union of the
  interiors of all simplices of the complex that have $\sigma$ as a
  face. That includes $\sigma$ also. The closure of this open set is
  called the \emph{closed star} and written $\ClSt{\sigma}$. This is
  the union of simplices that contain~$\sigma$.
\end{notation}

\begin{proposition} \label{prop:whtny_ip} Let $\sigma_i^p =
  [v_{i_0},\dots,v_{i_p}]$ and $\sigma_j^p = [v_{j_0},\dots,v_{j_p}]$
  be oriented $p$-simplices in an $n$-dimensional manifold simplicial
  complex $K$, with $0 \le p \le n$. Then the row $i$, column
  $j$ entry of the Whitney inner product matrix $M_p$ is given by
  \[
  M_p(i,j) = (p!)^2 \underset{\sigma^n \hasface \sigma^p_i, \sigma^p_j}
  {\sum_{\sigma^n}} 
  \sum_{k,l=0}^p(-1)^{k+l} c_{kl}\int_{\sigma^n}\mu_{i_k}\mu_{j_l} \, \mu\, ,
  \]
  where $c_{kl} = 1$ for $p=0$, and for $p>0$
  \[
  c_{kl} = \det
  \begin{bmatrix}
    \aInnerproduct{\d \mu_{i_0}}{\d \mu_{j_0}} & \dots & 
    \widehat{\aInnerproduct{\d \mu_{i_0}}{\d \mu_{j_l}}} &
    \dots & \aInnerproduct{\d \mu_{i_0}}{\d \mu_{j_p}}\\
    \vdots & & \vdots & & \vdots\\
    \widehat{\aInnerproduct{\d \mu_{i_k}}{\d \mu_{j_0}}} & 
    \dots & 
    \widehat{\aInnerproduct{\d \mu_{i_k}}{\d \mu_{j_l}}} & 
    \dots &
    \widehat{\aInnerproduct{\d \mu_{i_k}}{\d \mu_{j_l}}}\\
    \vdots & & \vdots & & \vdots\\
    \aInnerproduct{\d \mu_{i_p}}{\d \mu_{j_0}} & \dots &
    \widehat{\aInnerproduct{\d \mu_{i_p}}{\d \mu_{j_l}}} &
    \dots & \aInnerproduct{\d \mu_{i_p}}{\d \mu_{j_p}}
  \end{bmatrix}\, ,
  \]
  the hats indicating the deleted terms. Here $\mu$ is the volume form
  corresponding to the standard inner product in $\R^N$ and
  $\mu_{i_k}$ and $\mu_{j_l}$ are the barycentric coordinates
  corresponding to vertices $i_k$ and $j_l$.
\end{proposition}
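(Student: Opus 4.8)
The plan is to unfold Definition~\ref{def:whtnyip} and reduce the global $L^2$ integral to a finite sum of elementary integrals over the top-dimensional simplices. First I would write
\[
M_p(i,j) = \pinnerproduct{\whitney\bigl((\sigma_i^p)^{\ast}\bigr)}{\whitney\bigl((\sigma_j^p)^{\ast}\bigr)}_{L^2}
= \int_{\abs{K}} \ainnerproduct{\whitney\bigl((\sigma_i^p)^{\ast}\bigr)}{\whitney\bigl((\sigma_j^p)^{\ast}\bigr)}\,\mu\, ,
\]
and, since $K$ is a manifold complex of dimension $n$ and the lower-dimensional simplices carry no $n$-dimensional volume, split the integral as $\sum_{\sigma^n}\int_{\sigma^n}(\cdots)$; this is legitimate because, as recalled after \eqref{eq:cochains_ip}, each Whitney form is smooth up to the boundary inside every top simplex. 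The next step is to cut down the index set of this sum using the support property of Whitney forms: if $\sigma_i^p \not\faceof \sigma^n$ then some vertex $v_{i_m}$ of $\sigma_i^p$ is not a vertex of $\sigma^n$, and in every term of \eqref{eq:whitney_map} the index $m$ occurs either as the scalar hat-function factor $\mu_{i_m}$ or inside one of the differentials $\d\mu_{i_m}$; both $\mu_{i_m}$ and $\d\mu_{i_m}$ vanish identically on an $n$-simplex not containing $v_{i_m}$, so $\whitney\bigl((\sigma_i^p)^{\ast}\bigr)$ restricts to $0$ on $\sigma^n$. Hence only the $n$-simplices $\sigma^n \hasface \sigma_i^p, \sigma_j^p$ contribute, giving the outer sum in the statement.

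On such a $\sigma^n$ I would substitute the explicit formula \eqref{eq:whitney_map} for both Whitney forms --- noting that the hat functions $\mu_{i_k}$ restrict on $\sigma^n$ to the barycentric coordinates of $\sigma^n$ with respect to $v_{i_k}$ --- and expand the pointwise inner product bilinearly, as permitted by Definition~\ref{def:monomials_ip}. Since the pointwise inner product of forms is function-bilinear on the algebraic part, the scalar factors come out and
\[
\ainnerproduct{\whitney\bigl((\sigma_i^p)^{\ast}\bigr)}{\whitney\bigl((\sigma_j^p)^{\ast}\bigr)}
= (p!)^2 \sum_{k,l=0}^p (-1)^{k+l}\,\mu_{i_k}\mu_{j_l}\,
\aInnerproduct{\bigwedge_{a\ne k}\d\mu_{i_a}}{\bigwedge_{b\ne l}\d\mu_{j_b}}\, ,
\]
where $\bigwedge_{a\ne k}\d\mu_{i_a}$ abbreviates the ordered wedge $\d\mu_{i_0}\wedge\cdots\wedge\widehat{\d\mu_{i_k}}\wedge\cdots\wedge\d\mu_{i_p}$. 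Applying Definition~\ref{def:monomials_ip} to the monomial $p$-forms on the right gives $\aInnerproduct{\bigwedge_{a\ne k}\d\mu_{i_a}}{\bigwedge_{b\ne l}\d\mu_{j_b}} = \det\bigl[g(\d\mu_{i_a}^\sharp,\d\mu_{j_b}^\sharp)\bigr]_{a\ne k,\, b\ne l}$, and since $\aInnerproduct{\d\mu_{i_a}}{\d\mu_{j_b}} = g(\d\mu_{i_a}^\sharp,\d\mu_{j_b}^\sharp)$, this is precisely the $(k,l)$ minor $c_{kl}$ displayed in the proposition (no cofactor sign appears, because both wedges omit their deleted term in the same increasing-index order, and the signs $(-1)^{k+l}$ are already accounted for separately).

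Finally, the barycentric coordinates are affine, so their differentials $\d\mu_{i_a}$ are constant on $\sigma^n$; hence each $c_{kl}$ is constant on $\sigma^n$ and pulls out of the integral, leaving $(p!)^2\sum_{k,l}(-1)^{k+l}c_{kl}\int_{\sigma^n}\mu_{i_k}\mu_{j_l}\,\mu$. Summing over the contributing top simplices gives the claimed formula. The $p=0$ case is the degenerate one in which $\whitney\bigl((\sigma_i^0)^{\ast}\bigr) = \mu_{i_0}$, the empty determinant $c_{00}$ equals $1$, and $(p!)^2 = 1$, so the formula reduces correctly. I expect the main obstacle to be the careful bookkeeping of the two structural facts about Whitney forms --- that $\whitney\bigl((\sigma^p)^{\ast}\bigr)$ is supported on the closed star $\ClSt(\sigma^p)$, and that on each containing top simplex it is given by that simplex's barycentric coordinates --- together with matching the hat notation of \eqref{eq:whitney_map} to the row/column deletion that defines $c_{kl}$; the remaining computations are routine multilinear algebra and pulling constants through an integral.
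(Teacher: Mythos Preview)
Your proposal is correct and follows essentially the same route as the paper's proof: restrict the $L^2$ integral to top simplices containing both $\sigma_i^p$ and $\sigma_j^p$ via the support property of Whitney forms, expand the Whitney forms bilinearly, identify the monomial inner product with the determinant $c_{kl}$ via Definition~\ref{def:monomials_ip}, and use constancy of the $\d\mu$'s to pull $c_{kl}$ outside the integral. Your write-up is in fact more detailed than the paper's (you spell out the support argument and the $p=0$ case rather than citing \cite{Dodziuk1976}), but the structure is identical.
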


\begin{proof} See Appendix. \end{proof} 

For the $p=n$ case a simpler formulation is given in
Proposition~\ref{prop:p_equal_n}.  The above proposition shows that
computation of the Whitney inner product matrix involves computations
of inner products of differentials of barycentric coordinates. Since
the only metric implemented in PyDEC is the standard one inherited
from the embedding space $\R^N$,
\[
\aInnerproduct{\d \mu_i}{\d \mu_j} = g((\d \mu_i)^\sharp,(\d
\mu_j)^\sharp) = \nabla\mu_i \cdot \nabla\mu_j\, .
\] 

\begin{example} \label{ex:ttrhdrn_whtny} Consider the simplicial
  complex corresponding to a tetrahedron $\sigma^3$ embedded in $\R^3$
  for which we want to compute $M_2$, the Whitney inner product matrix
  for 2-cochains. Here $N=n=3$ and $p=2$ and $M_2$ is of order $N_2 =
  4$, the number of triangles in the complex. Label the vertices as
  $0, 1, 2, 3$. Then by PyDEC's lexicographic numbering scheme, the
  edges numbered 0 to 5 are $[0,1]$, $[0,2]$, $[0,3]$, $[1,2]$,
  $[1,3]$, $[2,3]$ and the triangles numbered 0 to 3 are $[0,1,2]$,
  $[0,1,3]$, $[0,2,3]$, and $[1,2,3]$.  We will describe the
  computation of the row 0, column 3 entry and the row 0, column 1
  entry of $M_2$. The $(0,3)$ entry corresponds to the inner product
  of cochains $[v_0,v_1,v_2]^\ast$ and $[v_1,v_2,v_3]^\ast$ since, in
  the lexicographic ordering and naming convention of PyDEC these are
  $\sigma^2_0$ and $\sigma^2_3$ respectively. Thus we are computing
  \[
  \pInnerproduct{\cochainBasis 2 0}{\cochainBasis 2 3} = 
  \pInnerproduct{\whitney \cochainBasis 2 0}
  {\whitney \cochainBasis 2 3}_{L^2}\, .
  \]
  The corresponding Whitney forms are
  \begin{align*}
    \whitney \cochainBasis{2}{0} &= 2! \, 
    \bigl(\mu_0\, \d \mu_1 \wedge  \d \mu_2 - 
    \mu_1 \d \mu_0 \wedge \d \mu_2 + 
    \mu_2 \d \mu_0 \wedge \d \mu_1\bigr)\\
    \whitney \cochainBasis{2}{3} &= 2! \, 
    \bigl(\mu_1\, \d \mu_2 \wedge  \d \mu_3 - 
    \mu_2 \d \mu_1 \wedge \d \mu_3 + 
    \mu_3 \d \mu_1 \wedge \d \mu_2\bigr)\, .
  \end{align*}
  Then $\pInnerproduct {\whitney \cochainBasis 2 0}{\whitney
    \cochainBasis 2 3}_{L^2}/(2!)^2$ is
  \begin{equation}\label{expr:ip03full}
  \int_{\sigma^3} \mu_0\mu_1
  \aInnerproduct{\d \mu_1 \wedge \d \mu_2}
  {\d\mu_2\wedge \d\mu_3}\, \mu -\\
  \int_{\sigma^3} \mu_0\mu_2
  \aInnerproduct{\d \mu_1 \wedge \d \mu_2}
  {\d\mu_1\wedge \d\mu_3}\, \mu + 
  \dots\, ,
  \end{equation}
  where $\mu$ is just $\d x\, \wedge\, \d y\, \wedge\, \d z$, the
  standard volume form in $\R^3$.  Each term like 
  $\aInnerproduct{\d\mu_1 \wedge \d \mu_2} {\d\mu_2\wedge \d\mu_3}$
  is
  \[
  \det
  \begin{bmatrix}
    \ainnerproduct{\d\mu_1}{\d \mu_2} &
    \ainnerproduct{\d\mu_1}{\d \mu_3} \\
    \ainnerproduct{\d\mu_2}{\d \mu_2} &
    \ainnerproduct{\d\mu_2}{\d \mu_3}
  \end{bmatrix}\, ,
  \]
  which in the notation of Prop.~\ref{prop:whtny_ip} is 
  \[
  \det
  \begin{bmatrix}
    \widehat{\ainnerproduct{\d\mu_0}{\d\mu_1}} &
    \widehat{\ainnerproduct{\d\mu_0}{\d\mu_2}} &
    \widehat{\ainnerproduct{\d\mu_0}{\d\mu_3}} \\
    \widehat{\ainnerproduct{\d\mu_1}{\d\mu_1}} &
    \ainnerproduct{\d\mu_1}{\d \mu_2} &
    \ainnerproduct{\d\mu_1}{\d \mu_3} \\
    \widehat{\ainnerproduct{\d\mu_2}{\d\mu_1}} &
    \ainnerproduct{\d\mu_2}{\d \mu_2} &
    \ainnerproduct{\d\mu_2}{\d \mu_3}        
  \end{bmatrix}\, .
  \]
  Using a shorthand notation for matrices like above, the $2\times 2$
  matrices whose determinants need to be computed for calculating the
  $(0,3)$ entry of $M_2$ are given below.
  \[
  \begin{array}{ccc}
  \begin{bmatrix}
    \widehat{01} & \widehat{02} & \widehat{03}\\
    \widehat{11} & 12 & 13\\
    \widehat{21} & 22 & 23
  \end{bmatrix} &
   \begin{bmatrix}
     \widehat{02} & \widehat{01} & \widehat{03}\\
     \widehat{12} & 11 & 13\\
     \widehat{22} & 21 & 23
   \end{bmatrix} &
 \begin{bmatrix}
     \widehat{03} & \widehat{01} & \widehat{02}\\
     \widehat{13} & 11 & 12\\
     \widehat{23} & 21 & 22
   \end{bmatrix}\\
   & & \\
  \begin{bmatrix}
    \widehat{11} & \widehat{12} & \widehat{13}\\
    \widehat{01} & 02 & 03\\
    \widehat{21} & 22 & 23
  \end{bmatrix} &
   \begin{bmatrix}
     \widehat{12} & \widehat{11} & \widehat{13}\\
     \widehat{02} & 01 & 03\\
     \widehat{22} & 21 & 23
   \end{bmatrix} &
 \begin{bmatrix}
     \widehat{13} & \widehat{11} & \widehat{12}\\
     \widehat{03} & 01 & 02\\
     \widehat{23} & 21 & 22
   \end{bmatrix}\\
   & & \\
  \begin{bmatrix}
    \widehat{21} & \widehat{22} & \widehat{23}\\
    \widehat{01} & 02 & 03\\
    \widehat{11} & 12 & 13
  \end{bmatrix} &
   \begin{bmatrix}
     \widehat{22} & \widehat{21} & \widehat{23}\\
     \widehat{02} & 01 & 03\\
     \widehat{12} & 11 & 13
   \end{bmatrix} &
 \begin{bmatrix}
     \widehat{23} & \widehat{21} & \widehat{22}\\
     \widehat{03} & 01 & 02\\
     \widehat{13} & 11 & 12
   \end{bmatrix}
  \end{array}
  \]
Removing the deleted rows and columns the above matrices are given
below as the actual $2\times 2$ matrices.
  \begin{equation} \label{expr:ip03}
  \begin{array}{ccc}
  \begin{bmatrix}
    12 & 13\\
    22 & 23
  \end{bmatrix} &
   \begin{bmatrix}
    11 & 13\\
    21 & 23
   \end{bmatrix} &
 \begin{bmatrix}
    11 & 12\\
    21 & 22
   \end{bmatrix}\\
   & & \\
  \begin{bmatrix}
    02 & 03\\
    22 & 23
  \end{bmatrix} &
   \begin{bmatrix}
    01 & 03\\
    21 & 23
   \end{bmatrix} &
 \begin{bmatrix}
    01 & 02\\
    21 & 22
   \end{bmatrix}\\
   & & \\
  \begin{bmatrix}
    02 & 03\\
    12 & 13
  \end{bmatrix} &
   \begin{bmatrix}
    01 & 03\\
    11 & 13
   \end{bmatrix} &
 \begin{bmatrix}
    01 & 02\\
    11 & 12
   \end{bmatrix}
  \end{array}
  \end{equation}
  The $2\times 2$ matrices whose determinants are needed in computing
  $\pInnerproduct{\cochainBasis 2 0}{\cochainBasis 2 1}$, i.e., entry
  $(0,1)$ of $M_2$ are given below.
  \begin{equation}\label{expr:ip01}
  \begin{array}{ccc}
  \begin{bmatrix}
    11 & 13\\
    21 & 23
  \end{bmatrix} &
   \begin{bmatrix}
    10 & 13\\
    20 & 23
   \end{bmatrix} &
 \begin{bmatrix}
    10 & 11\\
    20 & 21
   \end{bmatrix}\\
   & & \\
  \begin{bmatrix}
    01 & 03\\
    21 & 23
  \end{bmatrix} &
   \begin{bmatrix}
    00 & 03\\
    20 & 23
   \end{bmatrix} &
 \begin{bmatrix}
    00 & 01\\
    20 & 21
   \end{bmatrix}\\
   & & \\
  \begin{bmatrix}
    01 & 03\\
    11 & 13
  \end{bmatrix} &
   \begin{bmatrix}
    00 & 03\\
    10 & 13
   \end{bmatrix} &
 \begin{bmatrix}
    00 & 01\\
    10 & 11
   \end{bmatrix}
  \end{array}
  \end{equation}
\end{example}
Recall that each number in these matrices is shorthand for an inner
product of two barycentric differentials. For example, the entry 12
stands for $\aInnerproduct{\d \mu_1}{\d \mu_2} = g((\d
\mu_1)^\sharp,(\d \mu_2)^\sharp) = \nabla\mu_1 \cdot \nabla\mu_2$.

\begin{proposition} \label{prop:p_equal_n}
  For $p=n$, $M_p$ is a diagonal matrix with $M_p(i,i) =
  1/\abs{\sigma_i^n}$, where $\abs{\sigma_i^n}$ is the volume of the
  simplex.
\end{proposition}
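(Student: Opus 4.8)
The plan is to exploit two facts about the top-degree Whitney form: it is supported on its own simplex, and on that simplex it is a constant multiple of the volume form, with a coefficient pinned down by the fact that the de Rham map inverts the Whitney map. For the off-diagonal entries, note that for $p=n$ the closed star $\ClSt(\sigma_i^n)$ of a top-dimensional simplex equals $\sigma_i^n$ itself, since $K$ has no $(n+1)$-simplices; moreover $\whitney(\cochainBasis{n}{i})$ vanishes on every simplex of which $\sigma_i^n$ is not a face, because each monomial in~\eqref{eq:whitney_map} carries a factor $\mu_{i_k}$ or $\d\mu_{i_k}$, and the piecewise-linear coordinate $\mu_{i_k}$ is identically zero on any simplex missing the vertex $v_{i_k}$. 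Hence for $i\neq j$ the supports of $\whitney(\cochainBasis{n}{i})$ and $\whitney(\cochainBasis{n}{j})$ are $\sigma_i^n$ and $\sigma_j^n$, which meet in a set of $n$-dimensional measure zero, so the integrand of~\eqref{eq:cochains_ip} vanishes almost everywhere and $M_n(i,j)=0$. (Equivalently, in Proposition~\ref{prop:whtny_ip} with $p=n$ the coface sum over $n$-simplices having both $\sigma_i^n$ and $\sigma_j^n$ as faces is empty unless $i=j$.)

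Next I would show $\whitney(\cochainBasis{n}{i})$ is a constant $n$-form on $\sigma_i^n=\simplex{n}$. Since $\sum_{k=0}^n\mu_{i_k}=1$ there, also $\sum_{k=0}^n\d\mu_{i_k}=0$; substituting $\d\mu_{i_0}=-\sum_{k\ge 1}\d\mu_{i_k}$ and cancelling the repeated factors in each wedge product gives, for every $k$,
\[
\d\mu_{i_0}\wedge\cdots\wedge\widehat{\d\mu_{i_k}}\wedge\cdots\wedge\d\mu_{i_n}=(-1)^k\,\omega,\qquad\omega:=\d\mu_{i_1}\wedge\cdots\wedge\d\mu_{i_n}.
\]
Substituting into~\eqref{eq:whitney_map},
\[
\whitney(\cochainBasis{n}{i})=n!\sum_{k=0}^n(-1)^k\mu_{i_k}\,(-1)^k\,\omega=n!\Bigl(\sum_{k=0}^n\mu_{i_k}\Bigr)\,\omega=n!\,\omega,
\]
a constant $n$-form, since each $\d\mu_{i_k}$ is constant. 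On the $n$-dimensional simplex $\sigma_i^n$, carrying the flat metric induced from $\R^N$, every $n$-form is a scalar multiple of the volume form $\mu$, so $\whitney(\cochainBasis{n}{i})=c\,\mu$ for some constant $c$ (whose sign depends on whether the ordering $v_{i_0},\ldots,v_{i_n}$ agrees with the orientation defining $\mu$).

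To fix $c$ I would use the reproducing property $\deRham\circ\whitney=\id$ of Whitney forms:
\[
1=\Eval{\cochainBasis{n}{i}}{\sigma_i^n}=\Eval{\deRham\,\whitney(\cochainBasis{n}{i})}{\sigma_i^n}=\int_{\sigma_i^n}\whitney(\cochainBasis{n}{i})=c\,\abs{\sigma_i^n},
\]
so $c=1/\abs{\sigma_i^n}$; alternatively, $\int_{\sigma_i^n}\omega=\pm1/n!$ follows by changing variables to the barycentric coordinates $(\mu_{i_1},\ldots,\mu_{i_n})$, which carry $\sigma_i^n$ onto the standard simplex of volume $1/n!$, and this already gives $|c|\,\abs{\sigma_i^n}=1$. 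Then, since $\ainnerproduct{\mu}{\mu}=1$ pointwise,
\[
M_n(i,i)=\int_{\sigma_i^n}\ainnerproduct{\whitney(\cochainBasis{n}{i})}{\whitney(\cochainBasis{n}{i})}\,\mu=\int_{\sigma_i^n}c^2\,\mu=c^2\,\abs{\sigma_i^n}=\frac{1}{\abs{\sigma_i^n}}.
\]
The only nontrivial step is the wedge identity in the second paragraph: it is short sign bookkeeping, but it is what the statement hinges on, and it is also where the orientation conventions must be tracked carefully — they cancel in the final squared norm but do matter for the identity $\deRham\circ\whitney=\id$ invoked to determine $c$.
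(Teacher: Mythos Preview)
Your proof is correct and follows essentially the same line as the paper's: both argue that $\whitney(\cochainBasis{n}{i})$ is supported on $\sigma_i^n$, is a constant multiple of the volume form there, has integral $1$ over $\sigma_i^n$, and hence equals $\mu/\abs{\sigma_i^n}$, from which the diagonal entry follows immediately. The only difference is that the paper outsources the three supporting facts (vanishing off the simplex, constancy, and $\int_{\sigma^n}\whitney(\sigma^n)^\ast=1$) to a citation of Dodziuk, whereas you supply explicit arguments for each, including the wedge identity $\d\mu_{i_0}\wedge\cdots\wedge\widehat{\d\mu_{i_k}}\wedge\cdots\wedge\d\mu_{i_n}=(-1)^k\omega$.
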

\begin{proof}
  For any $n$-simplex $\sigma^n$, the Whitney form $\whitney
  (\sigma^n)^\ast$ is 0 on other $n$-simplices and so $M_p$ is
  diagonal. Furthermore, it is a constant coefficient volume form on
  $\sigma^n$ with $\int_{\sigma^n} \whitney (\sigma^n)^\ast =
  1$. See~\cite{Dodziuk1976} for proofs of these properties. Thus it
  must be that $\whitney (\sigma^n)^\ast = \mu / \abs{\sigma^n}$ where
  $\mu$ is the volume form on the simplex. Thus 
  \[
  \aInnerproduct{\whitney (\sigma^n)^\ast}{\whitney (\sigma^n)^\ast}
  \, \mu = \whitney (\sigma^n)^\ast \wedge \hodge \whitney
  (\sigma^n)^\ast = \frac{\mu}{\abs{\sigma^n}^2} \, .
  \]
  Thus $\pInnerproduct{\whitney (\sigma^n)^\ast}{\whitney
    (\sigma^n)^\ast}_{L^2}$ is $\int_{\sigma^n} \mu /
  \abs{\sigma^n}^2$ which is $1/\abs{\sigma^n}$.
\end{proof}

\subsection{Algorithm for Whitney inner product matrix}
\label{subsec:mssmtrx_algrthm}

We motivate our algorithm for Whitney mass matrix computation by
making some observations about Example~\ref{ex:ttrhdrn_whtny}. The
first, and obvious observation is that matrix $M_p$ is symmetric,
being an inner product matrix. Thus only the diagonal entries and
those above (or below) the diagonal need be computed.  A more
interesting efficiency comes from the structure of the entries of the
matrix collections, such as ones shown in~\eqref{expr:ip03}
and~\eqref{expr:ip01}. Note that many entries repeat in the shorthand
collection of matrices in~\eqref{expr:ip03} and~\eqref{expr:ip01}. For
example the entry 12 appears 4 times by itself in the matrix
collection~\eqref{expr:ip03}. Moreover, due to the symmetry of inner
product, the entry 12 corresponds to the same result as the entry 21
and 21 appears 4 times as well. That entry also appears 4 times in the
collection~\eqref{expr:ip01}. Thus it is clear that a saving in
computational time can be achieved by doing such calculations only
once. That is, $\ainnerproduct{\d \mu_1}{\d \mu_2} = \ainnerproduct{\d
  \mu_2}{\d \mu_1}$ need only be computed once for the tetrahedron.

The determinants of all the matrices in a collection such
as~\eqref{expr:ip03} are needed to plug into an expression
like~\eqref{expr:ip03full} to obtain a single entry (in this case row
0, column 3) of the Whitney inner product matrix for $p$-cochains
($p=2$ in this case), whose size ($4\times 4$ in this case) depends on
the number of $p$-simplices in the simplicial complex. Thus reusing
repeated inner products of barycentric differentials can add up to a
substantial saving in computational expense when all the unique
entries of $M_p$ are computed. These savings are quantified later in
this subsection.

Another useful point to note in the example calculation is that the
collection~\eqref{expr:ip01} of matrices can be obtained from the
collection~\eqref{expr:ip03} by keeping the first digit in each entry
same and making the substitutions $1\rightarrow 0$; $2\rightarrow 1$;
and $3\rightarrow 3$ in the second digit. The first digits in the two
collections are the same because both correspond to the triangle
$\sigma^2_0$. The substitution above works for the second digit
because $\sigma^2_3 = [v_1, v_2, v_3]$ and $\sigma^2_1 = [v_0, v_1,
v_3]$. This suggests the use of a template simplex for creating a
template collection of matrices whose determinants are needed. The
actual instances of the collections can then be obtained by using the
vertex numbers in a given simplex. This is another idea that is used
in the algorithm implemented in PyDEC. The algorithm takes as input
a manifold simplicial $n$-complex $K$, embedded in $\R^N$ and $0
\le p \le n$. The output is $M_p$, an $N_p \times N_p$ matrix
representation of inner product on $C^p(K;\R)$ using elementary
cochain basis. If a naive algorithm, which does not take into
account the duplications in determinant calculations were to be used,
the number of operations required in the mass matrix calculation are 
\[
N_n \times \dfrac{\dbinom{n+1}{p+1}^2+\dbinom{n+1}{p+1}}{2} 
\times \binom{n}{p}^2 \times Np^2 \times (O(p!) \text{ or } O(p^3)) \, .
\]
The last term is written as $O(p!)$ or $O(p^3)$ because a determinant
can be computed using the formula for determinant or by LU
factorization. For low values of $p$ (i.e. $\le$ about 5) the formula
will likely be better.

According to the above formula, for example, for $n=3, p=2$, the
number of determinants required in a naive implementation of mass
matrix calculation would be 
\[
\dfrac{\dbinom{4}{3}^2+\dbinom{4}{3}}{2} 
\times \binom{3}{2}^2 = 10 \times 9 = 90\, .
\]
But there are only 21 unique determinants needed for $n=3, p=2$. Our
algorithm computes the unique
determinants first and the operation count is
\[
N_n \times \dfrac{\dbinom{n+1}{p}^2+\dbinom{n+1}{p}}{2} 
\times Np^2 \times (O(p!) \text{ or } O(p^3)) \, .
\]
\begin{figure}[p]
  \centering
  \begin{tabular}[h]{ccc}
    \includegraphics[scale=0.3]{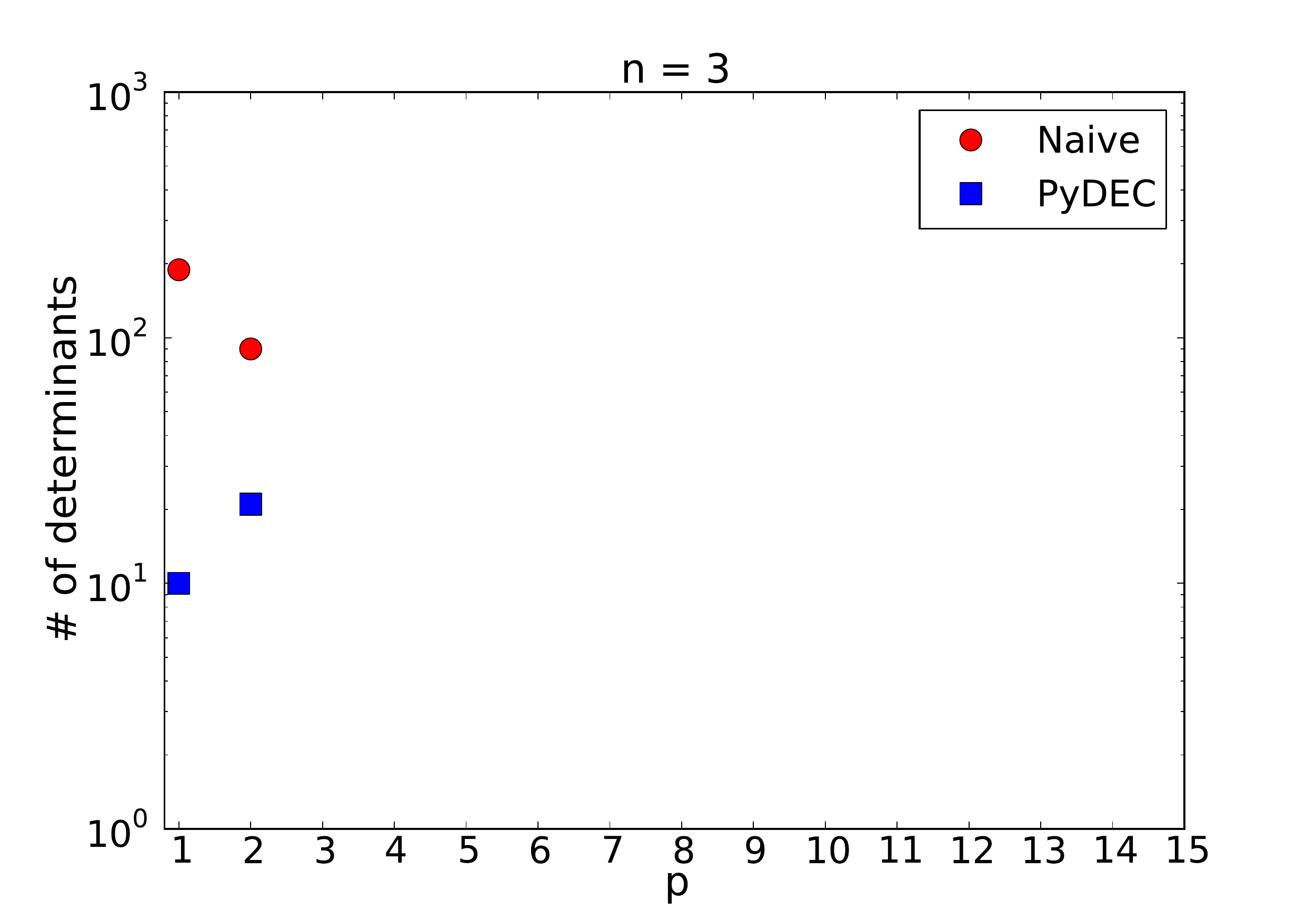} &
    \includegraphics[scale=0.3]{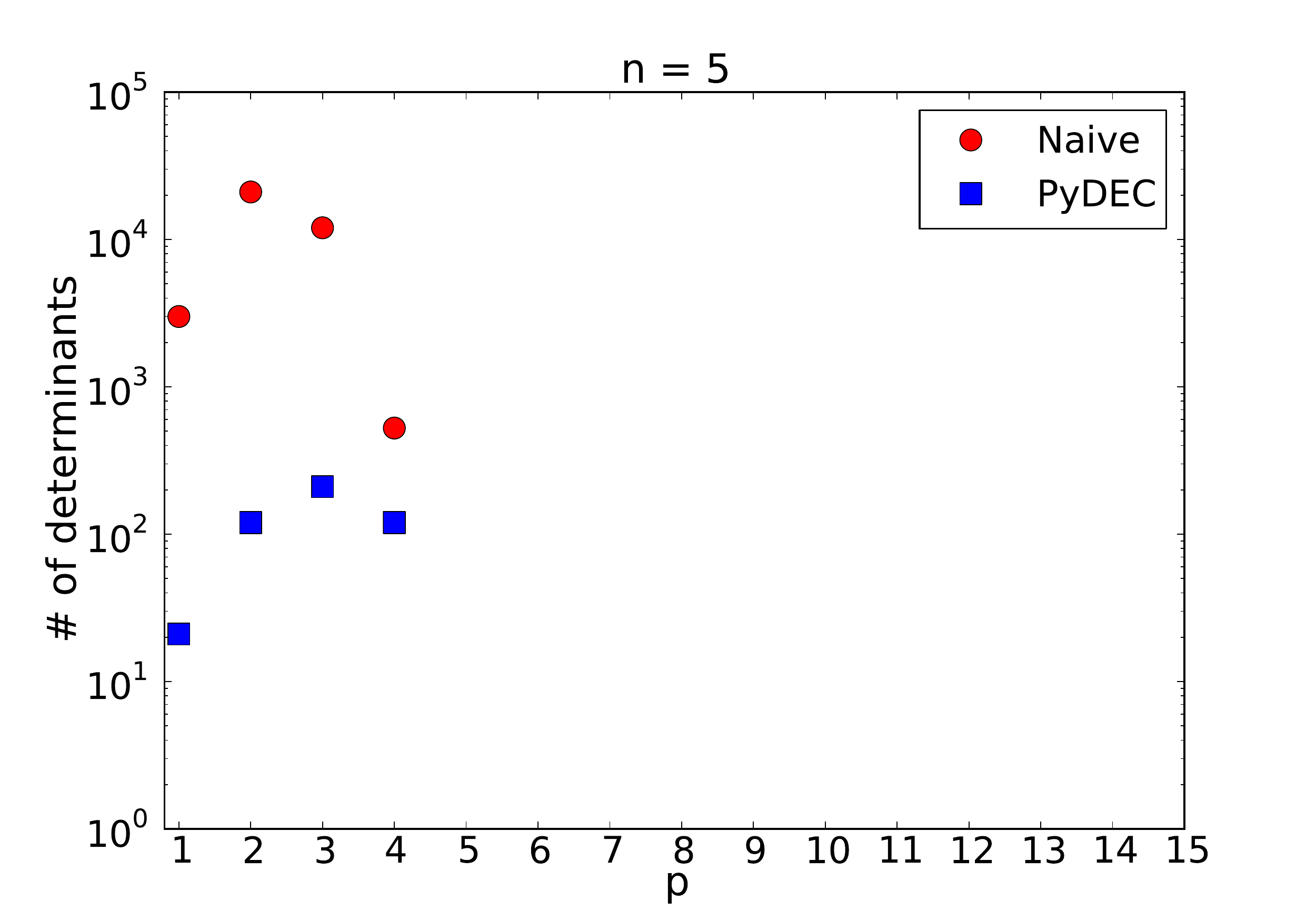}\\
    \includegraphics[scale=0.3]{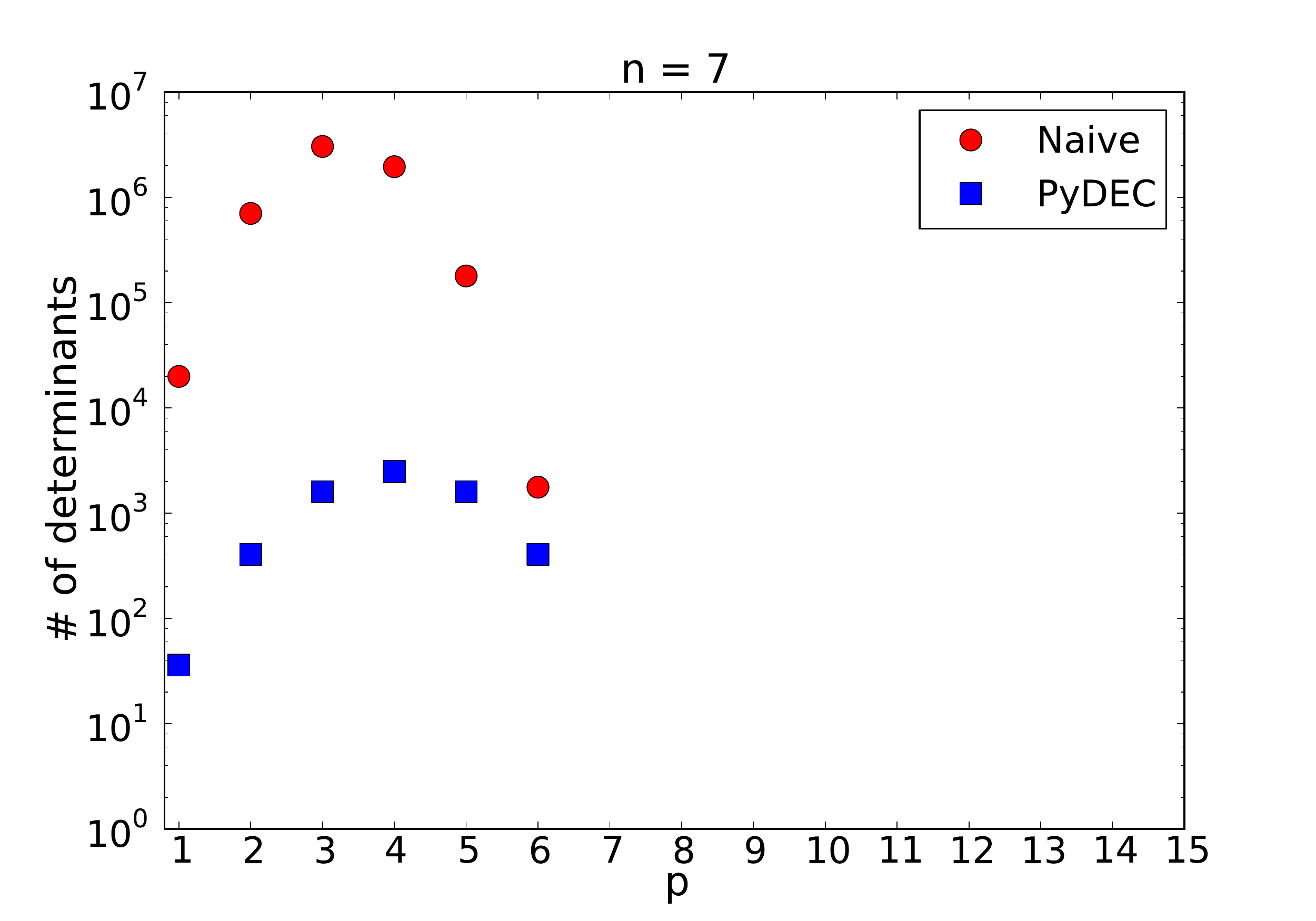} &
    \includegraphics[scale=0.3]{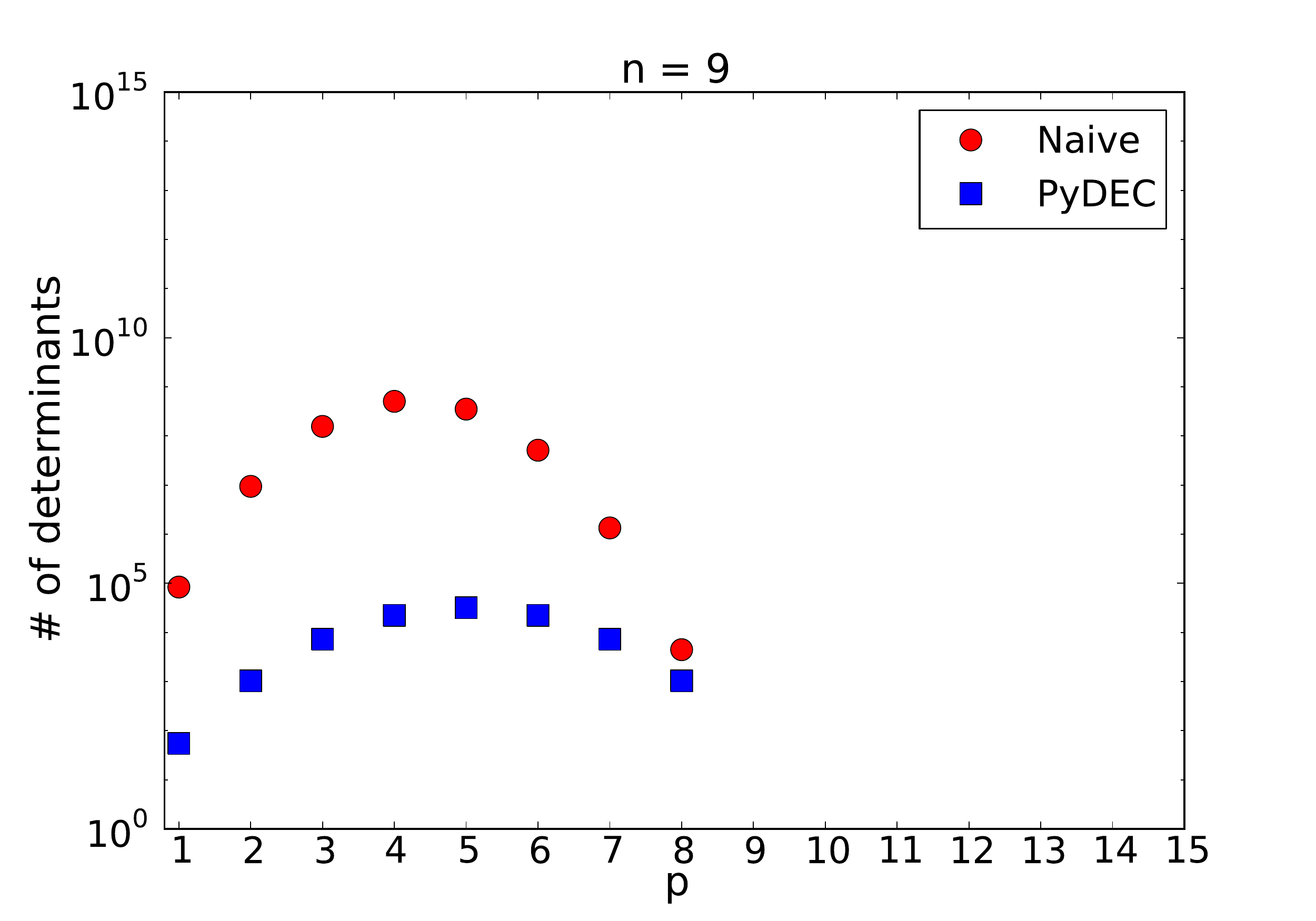}\\
    \includegraphics[scale=0.3]{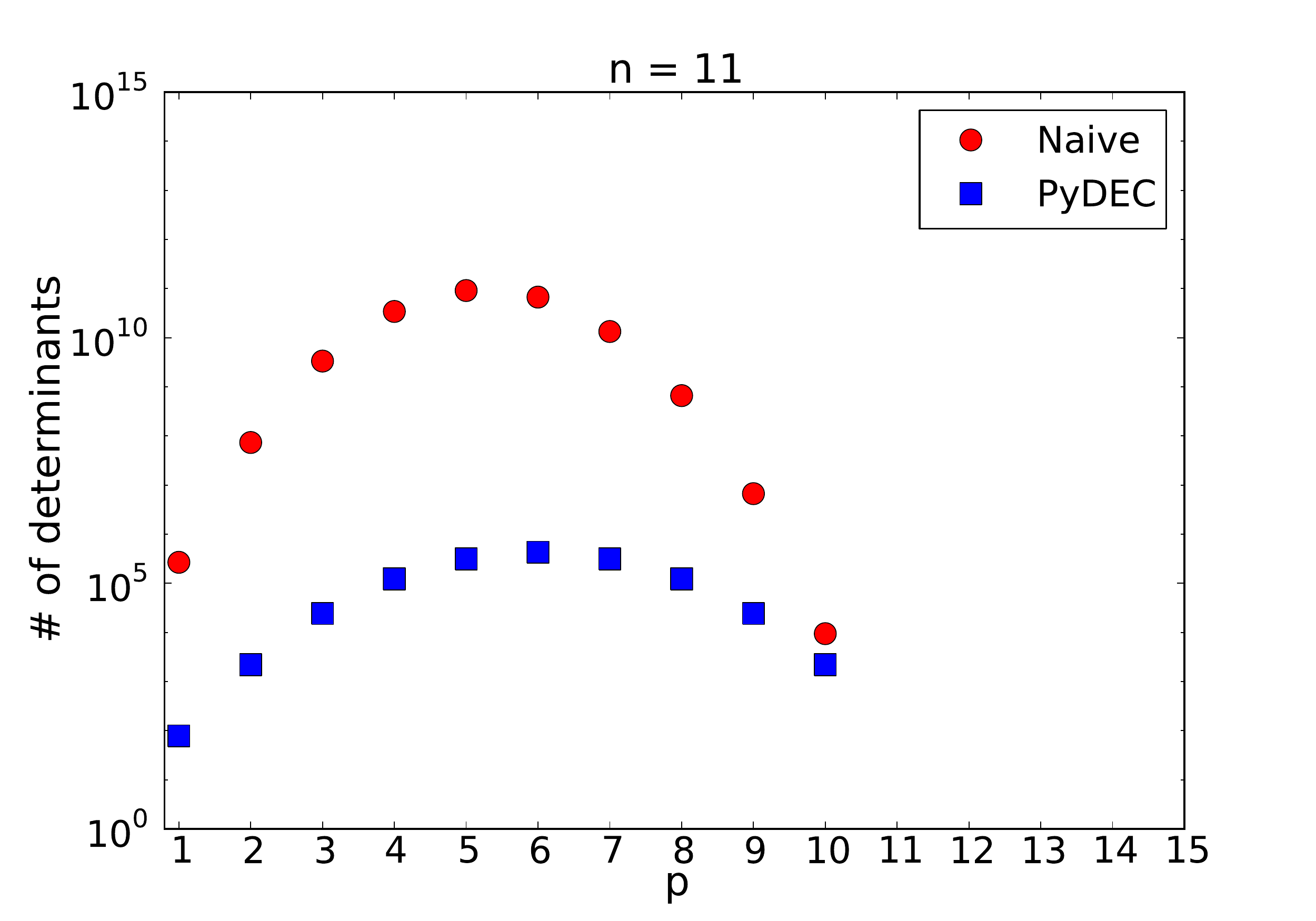} &
    \includegraphics[scale=0.3]{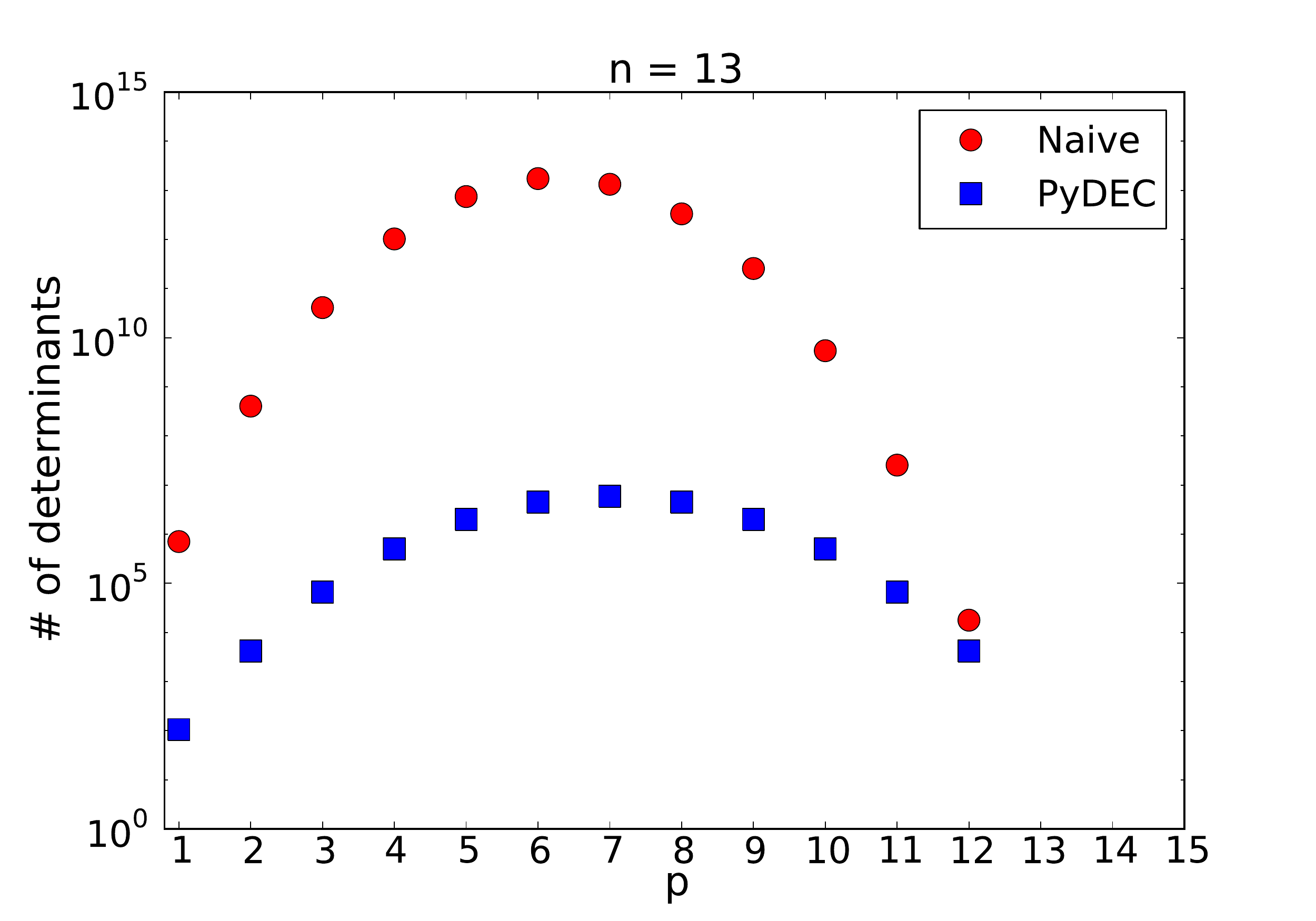} 
  \end{tabular}
  \caption{Comparison between PyDEC and a naive algorithm for
    computing Whitney mass matrix. The figure shows the number of
    determinant computations needed by the two algorithms, for various
    values of $p$, the Whitney form dimension, and $n$, the simplicial
    complex dimension. The embedding dimension is not relevant in
    these calculations. For $p=n$ case we use the shortcut described
    in Proposition~\ref{prop:p_equal_n} so that case is not shown.}
  \label{fig:dtrmnnt_cnts}
\end{figure}

Figure~\ref{fig:dtrmnnt_cnts} shows a comparison of determinant
counts for our algorithm compared with a naive algorithm that does the
duplicate work that our PyDEC algorithm avoids. Note that for any $n$,
the most advantage is gained intermediate values of $p$. The savings
that the PyDEC implementation provides over a naive algorithm are
several orders of magnitude, especially for moderately large $n$ and
higher. For $p=n$ case, in PyDEC we use the shortcut described in
Proposition~\ref{prop:p_equal_n}.

\section{Metric Dependent Operators}
\label{sec:metric}
We now describe the PyDEC implementations of some metric dependent
exterior calculus operators. The simplicial complex $K$ is now
supposed to be an approximation of a Riemannian $n$-manifold $M$. The
metric implemented in PyDEC is the one induced from an embedding space
$\R^N$. The main metric dependent operator is the Hodge star which
enables the discretization of codifferential and Laplace-deRham
operators. The sharp and flat, which are isomorphisms between 1-forms
and vector fields, are not implemented.

For the DEC Hodge star, the implementation is using the circumcentric
dual as in~\cite{Hirani2003, DeHiLeMa2005} and the other operators are
then simply defined in terms of the exterior derivative and the Hodge
star. For PyDEC's implementation of low order finite element exterior
calculus, we define the Hodge star to be the Whitney mass matrix
described in Section~\ref{sec:whtny_innrprdct}. The other operators
are defined by analogy with DEC even though the dual mesh concept is
not part of finite element exterior calculus.  Extensive experimental
justification for this approach can be seen in its effectiveness in
numerical experiments in~\cite{Bell2008} and in~\cite{HiKaWaWa2011}.

For the definitions in this section we will need two cochain complexes
of real-valued cochains. One will be on the simplicial complex $K$ and
for brevity we'll call this space of $p$-cochains $C^p(K)$ instead of
$C^p(K;\R)$. The other cochain complex is on the circumcentric dual
cell complex $\dual K$ and we'll denote the $(n-p)$-dimensional
cochains as $D^{n-p}(\dual K)$. At each dimension, these will be
connected by discrete Hodge star operators to be defined below. Since
the exterior derivative is the coboundary operator, the matrix
representation for the exterior derivative on the dual mesh is the
boundary operator. The matrix form for the DEC Hodge star
on $p$-cochains will be denoted $\hodge_p: C^p(K) \to D^{n-p}(\dual
K)$.  One box of the primal and dual complexes is shown below.
\begin{equation*}
  \begin{CD}
    C^p(K) @>\dd_p>> C^{p+1}(K) \\
    @VV\hodge_p V    @VV\hodge_{p+1} V \\
    D^{n-p}(\dual K) @<\dd_p^T< < D^{n-p-1}(\dual K)
  \end{CD}
\end{equation*}

As described in~\cite{Hirani2003, DeHiLeMa2005} and other references,
the DEC Hodge star is defined by
\[
\frac{\eval{\hodge_p \sigma^\ast_i}{\dual \sigma_j}}{\abs{\dual\sigma_j}} = 
\frac{\eval{\sigma^\ast_i}{\sigma_j}}{\abs{\sigma_j}}\, ,
\]
for $p$-simplices $\sigma_i$ and $\sigma_j$. Here $\sigma^\ast$ is the
elementary cochain corresponding to $\sigma$ and
$\eval{\sigma^\ast}{\tau}$ stands for the evaluation of the cochain
$\sigma^\ast$ on the elementary chain $\tau$. Thus the matrix
representation of the DEC Hodge star $\hodge_p$ is as a diagonal
matrix with $\hodge_p(i,i) =
\abs{\dual\sigma_i}/\abs{\sigma_i}$. In~\cite{Hirani2003,
  DeHiLeMa2005} this was defined for well-centered meshes. For the
codimension 1 Hodge star the definition extends to Delaunay meshes
with a slight additional condition for boundary simplices. This
extension involves computing the volume of $\dual\sigma$ taking into
account signs. Consider a codimension 1 simplex $\sigma$ shared by
simplices $L$ and $R$.  For the portion of $\dual\sigma$ corresponding
to $L$, the sign is positive if the circumcenter and remaining vertex
of $L$ are on the same side of $\sigma$. Similarly for $R$. (For
surface meshes and higher dimensional analogs the circumcenter
condition above is one way to define a Delaunay-like condition.)  If
$\sigma$ is a codimension 1 face of top dimensional $\tau$ and is on
domain boundary then the circumcenter of $\tau$ and vertex opposite to
$\sigma$ should be on the same side. The smooth Hodge star on
$p$-forms satisfies $\hodge\hodge = (-1)^{p(n-p)}$. In the discrete
setting $\hodge\hodge$ is written as $\hodge_p^{-1}\hodge_p$ or
$\hodge_p\hodge_p^{-1}$ and this is defined to be $(-1)^{p(n-p)}\, I$
where $I$ is the identity matrix.

In the smooth theory, the codifferential $\codiff_{p+1} :
\Omega^{p+1}(M) \to \Omega^p(M)$ is defined as $\codiff_{p+1} =
(-1)^{np+1} \hodge \d \hodge$ and so we define the discrete
codifferential $\dcodiff_{p+1} : C^p(K) \to C^{n-p}(K)$ as
$\dcodiff_{p+1} := (-1)^{np+1} \hodge_p^{-1} \dd_p^T \hodge_{p+1}$.
For finite element exterior calculus implemented in PyDEC, we take
this to be the definition, without reference to a dual mesh. If we now
take $\hodge_p$ to be the Whitney mass matrix then $\dd_p$ and
$\dcodiff_{p+1}$ are adjoints (up to sign) with respect to the Whitney
inner product on cochains as shown in~\cite{HiKaWaWa2011}. We will
call the use of Whitney mass matrix as $\hodge_p$ to be a
\emph{Whitney Hodge star} matrix.

In the discrete setting the Laplace-deRham operator is implemented in
the weak form. For $0<p<n$ the discrete definition is $\laplacian_p :=
\d_p\hodge_{p+1}\d_p + (-1)^{(p-1)(n-p+1)}
\hodge_p\d_{p-1}\hodge^{-1}_{p-1}\d^T_{p-1}\hodge_p$, with the
appropriate term dropped for the $p=0$ and $p=n$ cases. The above
expression involves inverses of the Hodge star, which is easy to
compute for DEC Hodge star since that is a diagonal matrix. For a
Whitney Hodge star see~\cite{Bell2008, HiKaWaWa2011} for various
approaches to avoiding explicitly forming the inverse Whitney mass
matrix in computations.

\subsection{Circumcenter Calculation}
\label{subsec:circumcenter}
Circumcentric duality is used in DEC. To compute the DEC Hodge star, a
basic computational step is the computation of the circumcenter of a
simplex. We give here a linear system for computing the circumcenter
using barycentric coordinates.

The circumcenter of a simplex is the unique point that is equidistant
from all vertices of that simplex.  In the case that a simplex (or
face) is not of the same dimension as the embedding (e.g. a triangle
embedded in $\R^4$), we choose the point that lies in the affine space
spanned by the vertices of the simplex.  In either case we can write
the circumcenter in terms of barycentric coordinates of the simplex.

Let $\sigma^p$ be the $p$-simplex defined by the points $\{ v_0, v_1,
\ldots v_p\}$ in $\R^N$.  Let $R$ denote the circumradius and $c$ the
circumcenter of simplex, which can be written in barycentric
coordinates as $c = \sum_j b_j v_j$ where $b_j$ is the barycentric
coordinate for the circumcenter corresponding to $v_j$.  For each
vertex $i$ we have
\[
\left\Vert v_i - \sum_{j=0}^p b_j v_j \right\Vert^2 - R^2 = 0 \, ,
\]
which can be rewritten as
\[
v_i \cdot v_i - 2 v_i \cdot \left(\sum_{j=0}^p b_j v_j\right) + 
\left\Vert \sum_{j=0}^p b_j v_j\right\Vert^2 - R^2 = 0\, .
\]
Here the norm and the dot product are the standard ones on $\R^N$.
Rearranging the above yields
\[
2 v_i \cdot \left(\sum_{j=0}^p b_j v_j\right) - 
\left(\left\Vert\sum_{j=0}^p b_j v_j\right\Vert^2 - R^2\right) = 
v_i \cdot v_i\, .
\]
The second term on the left hand side is some scalar which is unknown,
but is the same for every equation. So we can replace it by the
unknown $Q$ and write
\[
2 v_i \cdot \left(\sum_{j=0}^p b_j v_j\right) + Q = v_i \cdot v_i \, .
\]
With the additional constraint that barycentric coordinates sum to
one, we have a linear system with $p + 2$ unknowns ($b_0 \ldots b_p$
and $Q$) and $p + 2$ equations with the following matrix form
\[ 
\begin{pmatrix}
 2 v_0 \cdot v_0 & 2 v_0 \cdot v_1 & \dots & 2 v_0 \cdot v_p &1\\
 2 v_1 \cdot v_0 & 2 v_1 \cdot v_1 & \dots & 2 v_1 \cdot v_p &1\\
\vdots & \vdots & \ddots & \vdots & \vdots  \\
 2 v_p \cdot v_0 & 2 v_p \cdot v_1 & \dots & 2 v_p \cdot v_p &1\\
 1 & 1 & \dots & 1 & 0\\
\end{pmatrix} 
\begin{pmatrix}
b_0 \\
b_1 \\
\vdots \\
b_p \\
Q \\
\end{pmatrix} 
= 
\begin{pmatrix}
v_0 \cdot v_0 \\
v_1 \cdot v_1 \\
\vdots \\
v_p \cdot v_p \\
1 \\
\end{pmatrix}
\]
The solution to this yields the barycentric coordinates from which the
circumcenter $c$ can be located. Another quantity required for DEC
Hodge star is the unsigned volume of a simplex. This can be computed
by the well-known formula $\sqrt{\det V^T V}/p!$ where $V$ is the $p$
by $N$ matrix with rows formed by the vectors $\{ v_1 - v_0, v_2 -
v_0, \ldots v_p - v_0\}$.

\section{Examples} \label{sec:examples}

In the domains for which PyDEC is intended, it is often possible to
easily translate the mathematical formulation of a problem into a
working program. To make this point, and to demonstrate a variety of
applications of PyDEC, we give 5 examples from different fields. The
first example (Section~\ref{subsec:cavity}) is a resonant cavity
eigenvalue problem in which Whitney forms work nicely while the nodal
piecewise linear Lagrange vector finite element $\mathcal{P}_1^2$
fails when directly applied. The second is Darcy flow
(Section~\ref{subsec:darcy}), which is an idealization of the steady
flow of a fluid in a porous medium. We solve it here using DEC. The
third problem (Section~\ref{subsec:cohomology}) is computation of a
basis for the cohomology group of a mesh with several holes. This is
achieved in our code here by Hodge decomposition of cochains, again
using DEC. Next example (Section~\ref{subsec:sensor}) is an
idealization of the sensor network coverage problem. Some randomly
located idealized sensors in the plane are connected into a Rips
complex based on their mutual distances. Then a harmonic cochain
computation reveals the possibility of holes in coverage. The last
example (Section~\ref{subsec:ranking}) involves the ranking of
alternatives by a least squares computation on a graph.

None of these problem is original and they all have been treated in
the literature by a variety of techniques. We emphasize that we are
including these just to demonstrate the capabilities of PyDEC. We have
included the relevant parts of the Python code in this paper. The full
working programs are available with the PyDEC
package~\cite{BeHi2008a}.

\subsection{Resonant cavity curl-curl problem}
\label{subsec:cavity}

An electromagnetic resonant cavity is an idealized box made of a
perfect conductor and containing no enclosed charges in which
Maxwell's equations reduce to an eigenvalue problem.  Several authors
have popularized this example as one of many striking examples that
motivate finite element exterior calculus. See for
instance~\cite{ArFaWi2010}. The use of $\mathcal{P}_1^2$ finite
element space, i.e. piecewise linear, Lagrange finite elements with 2
components, yields a corrupted spectrum.  On the other hand, the use
of $\mathcal{P}_1^-$ elements, i.e., Whitney 1-forms yields the
qualitatively correct spectrum. For detailed analysis and background
see~\citep{BoFeGaPe1999,ArFaWi2010}.

Let $M \subset \R^2$ be a square domain with side length $\pi$. We
first give the equation in vector calculus notation and then in the
corresponding exterior calculus notation. In the former, the resonant
cavity problem is to find vector fields $E$ and eigenvalues $\lambda
\in \R$ such that
\[
\vcurl\, \curl E = \lambda E \quad \text{ on } M \quad \text{ and }
\quad E_\parallel = 0
\text{ on } \partial M \, ,
\]
where $E_\parallel$ is the tangential component of $E$ on the
boundary. Here $\vcurl \phi = (\partial \phi / \partial y, -\partial
\phi / \partial x)$ and $\curl v = \partial v_2 / \partial x
- \partial v_1 / \partial y$ for scalar function $\phi$ and vector
field $v = (v_1, v_2)$. Note that for $\lambda \ne 0$ this equation is
equivalent to the pair of equations $\Delta E = \lambda E$ and $\div E
= 0$. This is because the vector Laplacian $\Delta = \vcurl \circ
\curl - \grad \circ \div$ and $\div \circ \curl = 0$.

Now we give the equation in exterior calculus notation so the
transition to PyDEC will be easier. Let $u \in \Omega^1(M)$ be the unknown
electric field 1-form and $i : \partial M \hookrightarrow M$ the
inclusion map. Then the above vector calculus equation is equivalent
to 
\begin{alignat*}{2}
 \delta_2 \d_1 u & = \lambda\, u &&\quad \text{in } M\\
   i^\ast u &= 0 && \quad \text{on } \boundary M\, .
\end{alignat*}
The pullback $i^\ast u$ by inclusion map means restriction of $u$ to
the boundary, i.e., allowing only vectors tangential to the boundary
as arguments to $u$. As usual, we will seek $u$ not in $\Omega^1(M)$
but in $H\Omega^1(M)$ subject to boundary conditions. Define the
vector space $V = \{v\,\vert\, v \in H\Omega^1(M), i^\ast v = 0 \text{
  on } \partial M\}$. 

To express the PDE in weak form, we seek a $(u,\lambda)$ in $V\times
\R$ such that $\pinnerproduct{\codiff_2 \d_1 u}{v}_{L^2} = \lambda
\pinnerproduct{u}{v}_{L^2}$ for all $v \in V$. By the properties of
the codifferential, the expression on the left is equal to
$\pinnerproduct{\d_1 u}{\d_1 v}_{L^2} - \int_{\partial M} u \wedge
\hodge \d_1 v$. But the boundary term is 0 because $u$ is in $V$. Thus
the weak form is to find a $(u,\lambda) \in V\times \R$ such that
$\pinnerproduct{\d_1 u}{\d_1 v}_{L^2} = \lambda
\pinnerproduct{u}{v}_{L^2}$ for all $v \in V$. 

Taking the Galerkin approach of looking for a solution in a finite
dimensional subspace of $V$ here we pick the space of Whitney 1-forms,
that is, $\mathcal{P}_1^-\Omega^1$ as the finite dimensional
subspace. We define these over a triangulation of $M$ which we will
call $K$.  The Whitney map $\whitney : C^1(K;\R) \to
L^2\Omega^1(\abs{K})$ is an injection with its image
$\mathcal{P}_1^-(K)$. Thus an equivalent formulation is over
cochains. Using the same names for the variables, we seek a
$(u,\lambda) \in C^1(K;\R) \times \R$ such that $\pinnerproduct{\d_1
  \whitney u}{\d_1 \whitney v}_{L^2} = \lambda \pinnerproduct{\whitney
  u}{\whitney v}_{L^2}$ for all 1-cochains $v\in C^1(K;\R)$. Since the
Whitney map commutes with the exterior derivative and coboundary
operator, and using the definition of cochain inner product, the above
is same as $\pinnerproduct{\d_1 u}{\d_1 v} = \lambda
\pinnerproduct{u}{v}$ where now the inner product is over cochains and
$\d_1$ is the coboundary operator. In matrix notation, using
$\hodge_1$ and $\hodge_2$ to stand for the Whitney mass matrices $M_1$
and $M_2$, the generalized eigenvalue problem is to find $(u,\lambda)
\in C^1(K;\R) \times \R$ such that
\[ 
\d_1^T \, \hodge_2 \, \d_1 u = \lambda \, \hodge_1 u \, .
\]

We now translate this equation into PyDEC code. Once the appropriate
modules have been imported, a simplicial complex object \texttt{sc} is
created after reading in the mesh files. Now the main task is to find
matrix representations for the stiffness matrix $\d_1^T \, \hodge_2 \,
\d_1$and the mass matrix $\hodge_1$. This is accomplished by the
following two lines, where \texttt{K} is the stiffness matrix :
\lstinputlisting[firstline=25,lastline=26]{code/cavity/driver.py} The
boundary conditions can be imposed by simply removing the edges that
lie on the boundary. The indices of such edges is easily determined
and stored in the list \texttt{non\_boundary\_indices} which is used
below to impose the boundary conditions :
\lstinputlisting[firstline=34,lastline=35]{code/cavity/driver.py} Now
all that remains is to solve the eigenvalue problem. To simplify the
code and because the matrix size is small, we use the dense eigenvalue
solver \texttt{scipy.linalg.eig}
\lstinputlisting[firstline=39,lastline=39]{code/cavity/driver.py} Some
of the resulting eigenvalues are displayed in the left part of
Figure~\ref{fig:cavity}. The 1-cochain $u$ which is the eigenvector
corresponding to one of these eigenvalues is shown as a vector field
in the right part of Figure~\ref{fig:cavity}. The visualization as a
vector field is achieved by interpolating the 1-cochain $u$ using the
Whitney map and then sampling the vector field $(\whitney u)^\sharp$
at the barycenter. This is achieved by the PyDEC command:
\lstinputlisting[firstline=54,lastline=54]{code/cavity/driver.py}
where \texttt{all\_values} contains both the known and the computed
values of the 1-cochain. There is no sharp operator in PyDEC. But
since PyDEC only implements the Riemannian metric from the embedding
space of simplices the transformation from 1-form to vector field just
involves using the components of the Whitney 1-form as the vector
field components.

\begin{figure}[t]
  \centering
  \includegraphics[height=2in,trim=0.25in 0.4in 0.25in 0.5in, clip]
  {}
  \includegraphics[height=2.2in,trim=1.1in 0.8in 1in 0.75in, clip]
  {}
  \caption{The first 50 nonzero eigenvalues for the resonant cavity
    problem of Section~\ref{subsec:cavity} and the eigenvector
    corresponding to one of these eigenvalues. The eigenvector is a
    1-cochain which is visualized as a vector field by first
    interpolating it using a Whitney map. See
    Section~\ref{subsec:cavity} for details.}
  \label{fig:cavity}
\end{figure}

\subsection{Darcy flow or Poisson's in mixed form}
\label{subsec:darcy}
We give here a brief description of the equations of Darcy flow and
their PyDEC implementation. For more details
see~\cite{HiNaCh2011}. The resonant cavity example in
Section~\ref{subsec:cavity} was implemented using finite element
exterior calculus. For variety we use a DEC implementation for Darcy
flow.

Darcy flow is a simple model of steady state flow of an incompressible
fluid in a porous medium. It models the statement that flow is from
high to low pressure. For a fixed pressure gradient, the velocity is
proportional to the permeability $\kappa$ of the medium and inversely
proportional to the viscosity $\mu$ of the fluid. Let the domain be
$M$, a polygonal planar domain. Assuming that there are no sources of
fluid in $M$ and there is no other force acting on the fluid, the
equations of Darcy flow are
\begin{equation}\label{eq:vfdrcy}
  v + \frac{\kappa}{\mu} \nabla p = 0 \quad\text{and}
  \quad \div v = 0 \quad \text{in } M \quad \text{with}\quad
  v \cdot \hat{n} = \psi \quad \text{on } \partial M.
\end{equation}
where $\kappa > 0$ is the coefficient of permeability of the medium
$\mu > 0$ is the coefficient of (dynamic) viscosity of the fluid,
$\psi:\partial M \rightarrow \mathbb{R}$ is the prescribed normal
component of the velocity across the boundary, and $\hat{n}$ is the
unit outward normal vector to $\partial M$. For consistency
$\int_{\partial M} \psi \, d\Gamma = 0$, where $d\Gamma$ is the
measure on $\partial M$. Since $\div \circ \grad = \Delta$, the
simplified Darcy flow equations above are equivalent to Laplace's
equation.

Let $K$ be a simplicial complex that triangulates $M$. Instead of
velocity and pressure, we will use flux and pressure as the primary
unknowns. The flux through the edges is $f=\hodge v^\flat$ and thus it
will be a primal 1-cochain. Although PyDEC does not implement a flat
operator, this is not an issue here because we never solve for $v$,
and make $f$ itself one of the unknowns. This implies that the
pressure $p$ will be a dual 0-cochain since $\hodge \d p$ has to be of
the same type as $f$. The choice to put flux on primal edges and
pressures on circumcenters can be reversed, as shown in a dual
formulation in~\cite{GiBa2010a}. In exterior calculus notation, the
PDE in~\eqref{eq:vfdrcy} is
$-(\mu/k)\hodge f + \d p = 0$ and $\d f = 0$, which, when discretized,
translates to the matrix equation
\[
\begin{bmatrix}
-(\mu/k)\hodge_1 & \d_1^T\\
\d_1 & 0
\end{bmatrix} 
\begin{bmatrix}
f \\ p
\end{bmatrix} =
\begin{bmatrix}
  0\\0
\end{bmatrix}\, .
\]
In PyDEC, the construction of this matrix is straightforward. Once a
simplicial complex \texttt{sc} has been constructed, the following 3
lines construct the matrix in the system above :
\lstinputlisting[firstline=40,lastline=42]{code/darcy/driver.py}

After computing the boundary condition in terms of flux through the
boundary edges, the linear system is adjusted for the known values and
then solved for the fluxes and pressures. Figure~\ref{fig:darcy} shows
the solution for the case of constant horizontal velocity and linear
pressure gradient.

\begin{figure}[t]
  \centering
  \includegraphics[scale=0.4,trim=0in 0in 0in 0.5in, clip]{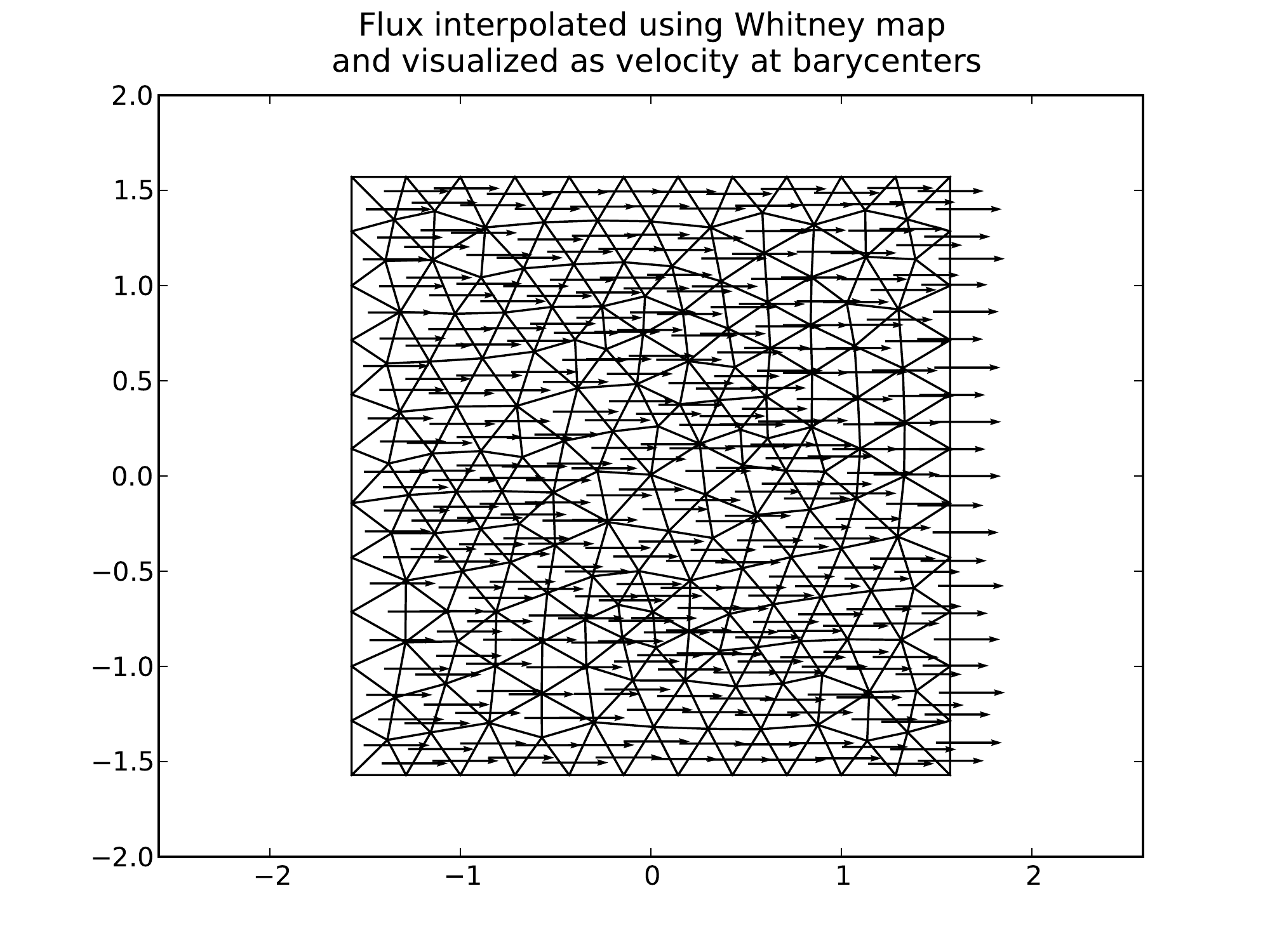}
  \includegraphics[scale=0.4]{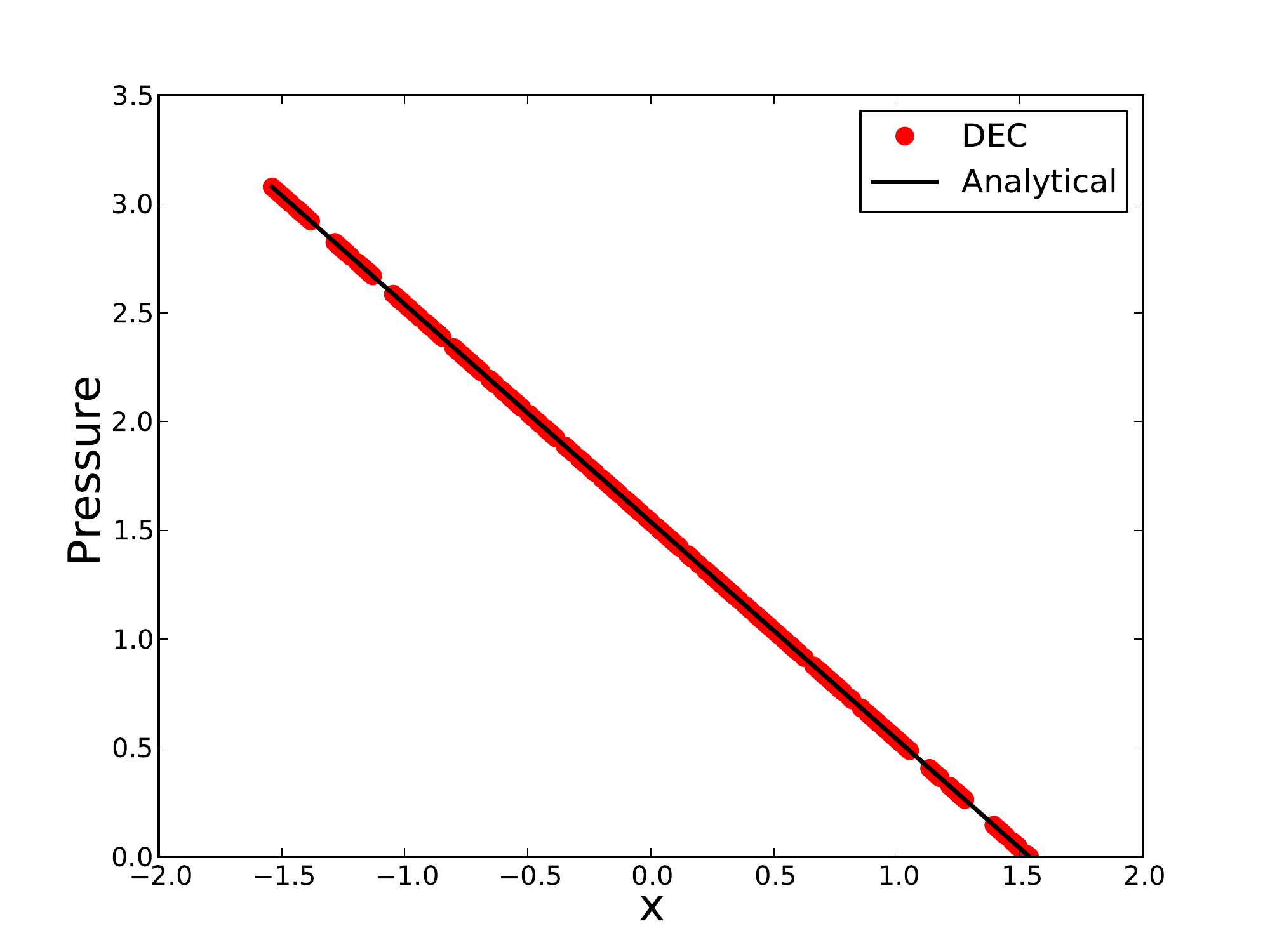}
  \caption{Darcy flow using discrete exterior calculus. The boundary
    condition is that fluid is coming in from left and leaving from
    right with velocity 1. The velocity inside should be constant and
    pressure should be linear. The flux and pressure are computed in a
    mixed formulation. The flux is taken to be a primal 1-cochain
    associated with primal edges, and the pressure is a dual 0-cochain
    on dual vertices, which are circumcenters of the triangles. The
    velocity is obtained by Whitney interpolation of the flux, which
    is sampled at the barycenters. See Section~\ref{subsec:darcy} for
    more details.}
\label{fig:darcy}
\end{figure}

\subsection{Cohomology basis using Hodge decomposition}
\label{subsec:cohomology}

The Hodge Decomposition Theorem~\cite[page 539]{AbMaRa1988} states
that for a compact boundaryless smooth manifold $M$, for any $p$-form
$\omega \in \Omega^p(M)$, there exists an $\alpha\in\Omega^{p-1}(M)$,
$\beta \in \Omega^{p+1}(M)$, and a harmonic form $h \in \Omega^p(M)$
such that $\omega = d \alpha + \delta \beta + h$. Here harmonic means
that $\Delta h = 0$, where $\Delta$ is the Laplace-deRham operator
$\d\delta +\delta \d$. Moreover $\d\alpha$, $\delta \beta$ and $h$ are
mutually $L^2$-orthogonal, which makes them uniquely determined. In
case of a manifold with boundary, the decomposition is similar, with
some additional boundary conditions. See~\cite{AbMaRa1988} for
details.

The Hodge-deRham theorem~\cite{AbMaRa1988}, relates the analytical
concept of harmonic forms with the topological concept of
cohomology. For any topological space, the cohomology groups or vector
spaces of various dimension capture essential topological information
about the space~\cite{Munkres1984}. For the manifold $M$ above, the
$p$-dimensional cohomology group with real coefficients, which is a
finite-dimensional space, is denoted $H^p(M;\R)$ or just $H^p(M)$. For
example, for a torus, $H^1$ has dimension 2. For a square with 4 holes
used in this example, which does have boundaries, $H^1$ has dimension
4. The elements of $H^p(M)$ are equivalence classes of closed forms
(those whose $\d$ is 0). Two closed forms are equivalent if their
difference is exact (that is, is $\d$ of some form). While the
representatives of 1-homology spaces can be visualized as loops around
holes, handles, and tunnels, those of 1-cohomology should be
visualized as fields. If the space of harmonic forms is denoted
$\mathcal{H}^p(M)$, then the Hodge-deRham theorem says that it is
isomorphic, as a vector space, to the $p$-th cohomology space $H^p(M)$
in the case of a closed manifold. See~\cite{Jost2005} for
details. Again, the case of $M$ with boundary requires some
adjustments in the definitions, as given in~\cite{AbMaRa1988}.

For finite dimensional spaces, Hodge decomposition follows from very
elementary linear algebra. If $U$, $V$ and $W$ are finite-dimensional
inner product vector spaces and $A : U \to V$ and $B: V \to W$ are
linear maps such that $B \circ A = 0$ then middle vector space $V$
splits into 3 orthogonal components, which are $\im A$, $\im B^T$, and
$\ker A^T \cap \ker B$. In this example, we find a basis for $H^1$ for
a square. This is done by finding a basis of harmonic 1-cochains.
Thus given a 1-cochain $\omega$, its discrete Hodge decomposition
exists and is $\omega = \dd_0 \alpha + \dcodiff_2 \beta + h$.  In this
example, the cochains $\alpha$ and $\beta$ are obtained by solving the
linear systems
$\dcodiff_1 \dd_0 \alpha = \dcodiff_1 \omega$ and $\dd_1 \dcodiff_2
\beta = \dd_1 \omega$.  The harmonic component can then be computed
by subtraction.

In the example code, the main function is the one that computes the
Hodge decomposition of a given cochain \texttt{omega}. First empty
cochains for \texttt{alpha} and \texttt{beta} are created:
\lstinputlisting[firstline=30,lastline=33]{code/hodge/driver.py} Now
the solution for \texttt{alpha} and \texttt{beta} closely follows the
above equations for $\alpha$ and $\beta$:
\lstinputlisting[firstline=36,lastline=38]{code/hodge/driver.py}
\lstinputlisting[firstline=41,lastline=43]{code/hodge/driver.py} Even
though the matrices \texttt{A} above are singular, the solutions
exist, and since conjugate gradient is used, the presence of the
nontrivial kernels does not pose any problems~\cite{BoLe2005}.

\begin{figure}[t]
  \centering
  \includegraphics[height=1.5in,trim=0.75in 0.75in 0.75in 0.75in,
  clip]{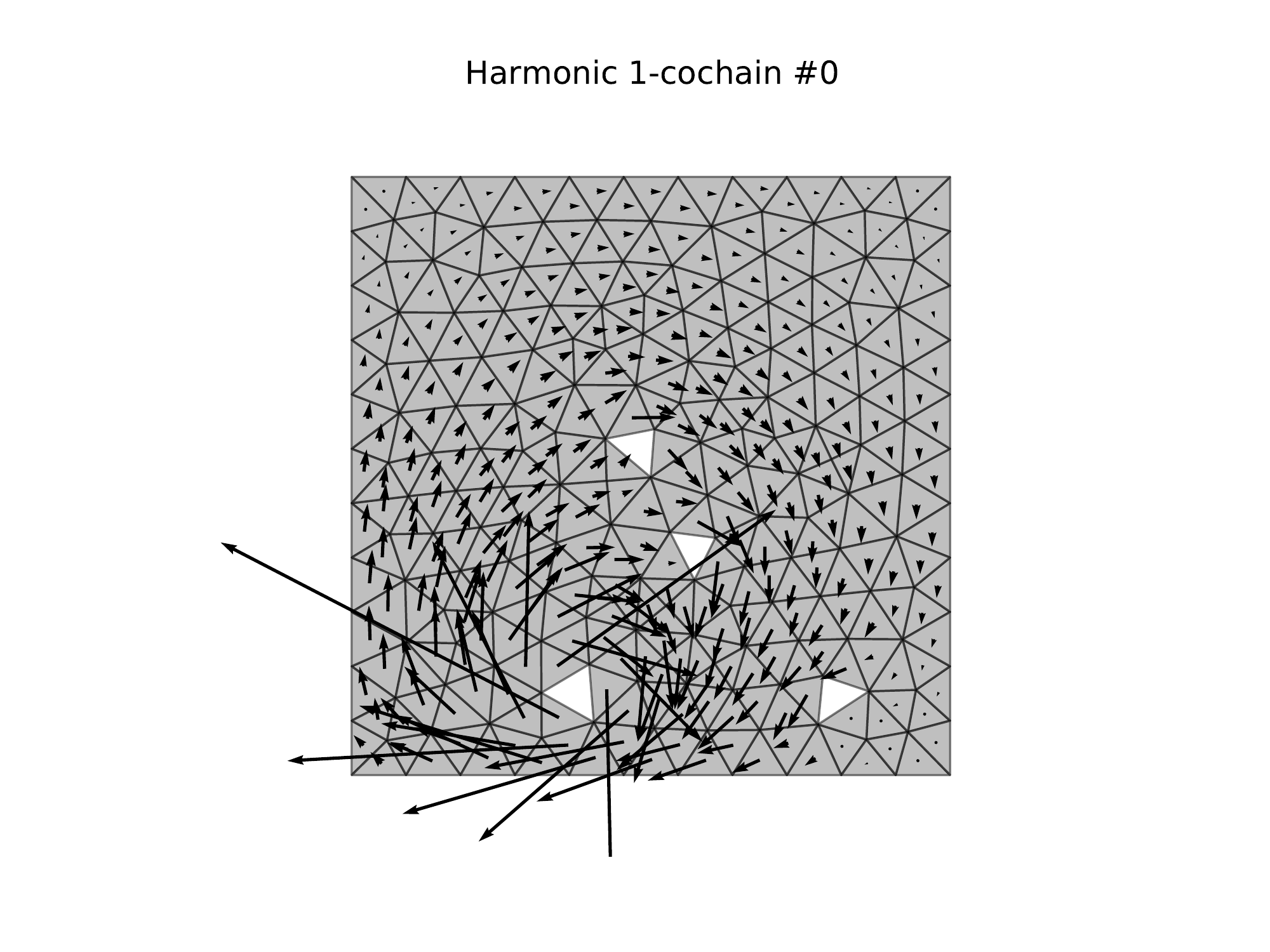}
  \includegraphics[height=1.5in,trim=0.75in 0.75in 0.75in 0.75in,
  clip]{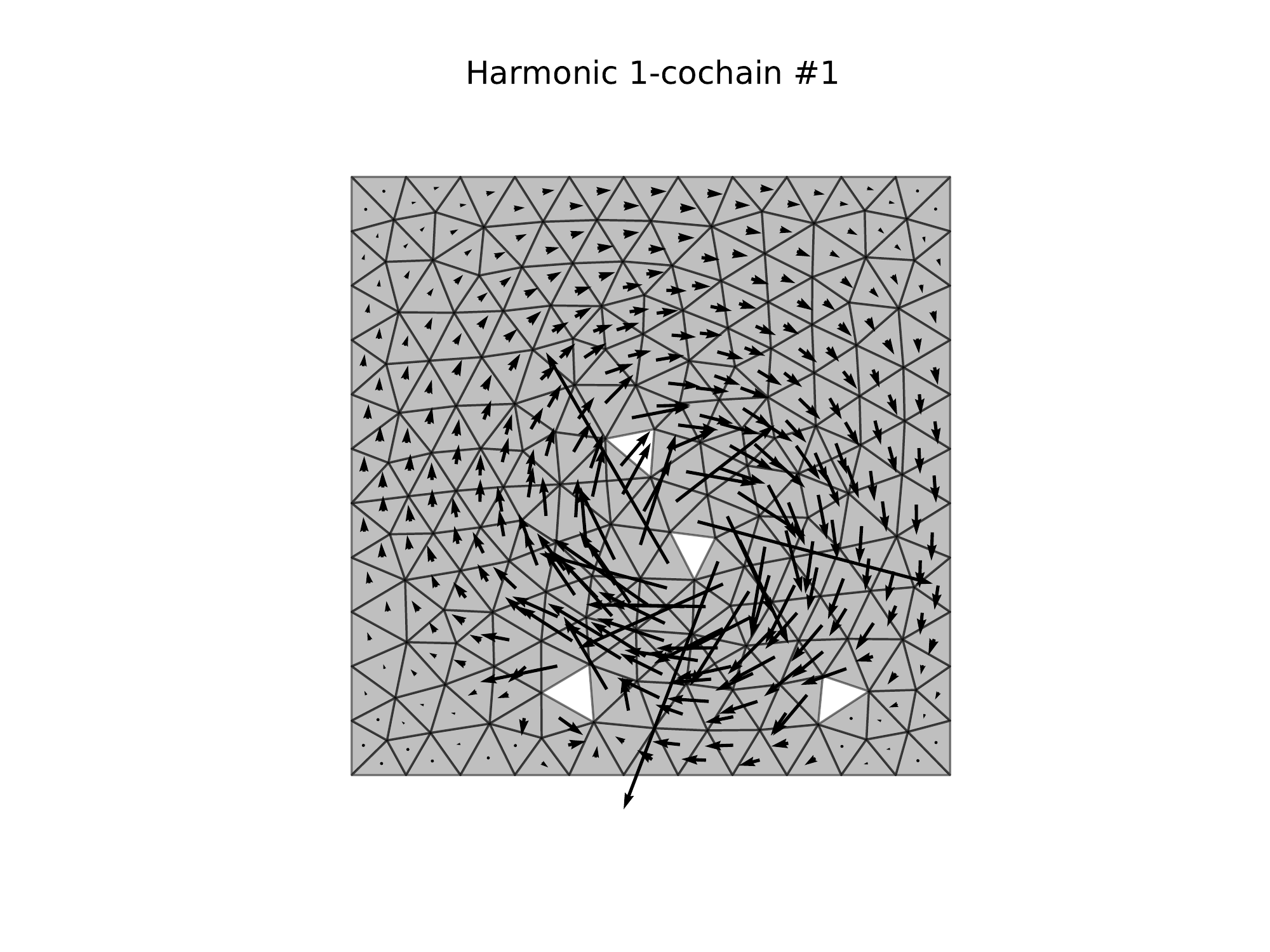} \\
  \includegraphics[height=1.5in,trim=0.75in 0.75in 0.75in 0.75in,
  clip]{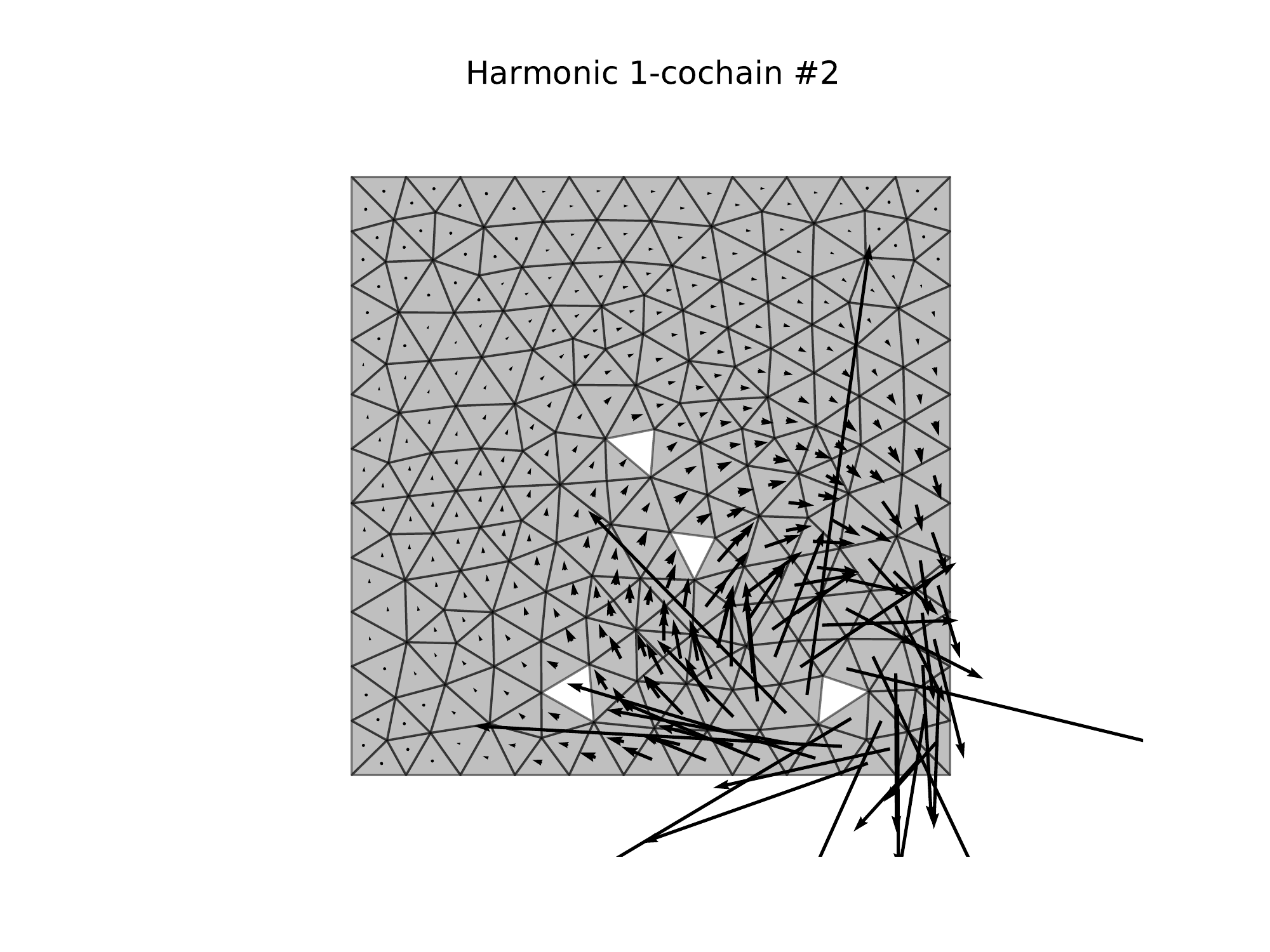}
  \includegraphics[height=1.5in,trim=0.75in 0.75in 0.75in 0.75in,
  clip]{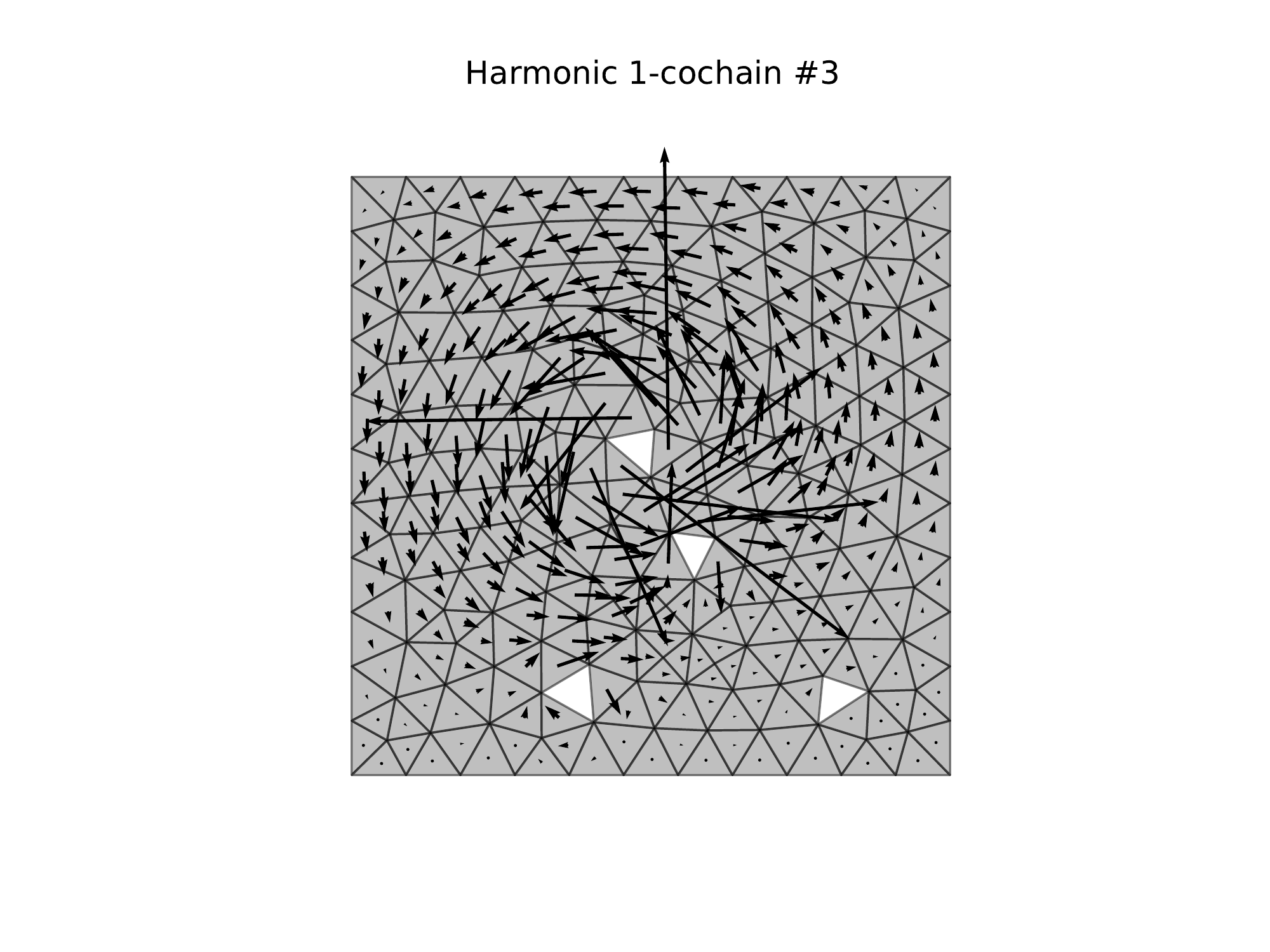}
  \caption{Four harmonic cochains form a basis for the first
    cohomology space $H^1$ for a mesh with four holes. Each cochain is
    visualized above as a vector field by interpolating it from the
    edge values using Whitney interpolation. See
    Section~\ref{subsec:cohomology} for details.}
  \label{fig:hodge_harmonic_basis}
\end{figure}

The harmonic $1$-forms shown in Figure~\ref{fig:hodge_harmonic_basis}
are obtained by decomposing random $1$-forms and retaining their
harmonic components.  Since the initial basis has no particular
spatial structure, an ad hoc orthogonalization procedure is then
applied.  For each basis vector, the algorithm identifies the
component with the maximum magnitude and applies Householder
transforms to force the other vectors to zero at that same component.

\subsection{Sensor network coverage}
\label{subsec:sensor}

As discussed in Section~\ref{sec:rips_complex}, sensor network
coverage gaps can be identified with coordinate-free methods based on
topological properties of the Rips complex. This is for an idealized
abstraction of a sensor network. The following example constructs a
\texttt{rips\_complex} object from a set of 300 points randomly
distributed over the unit square, as illustrated in top left in
Figure~\ref{fig:sensor_network}.  Recall that the Rips complex is
constructed by adding an edge between each pair each pair of points
within a given radius.  Top right of Figure~\ref{fig:sensor_network}
illustrates the edges of the Rips complex produced by a cut-off radius
of 0.15.  The triangles of the Rips complex, illustrated in bottom
left of Figure~\ref{fig:sensor_network}, represent triplets of
vertices that form a clique in the edge graph of the Rips complex. The
Rips complex is created by the following two lines of code
\lstinputlisting[firstline=15,lastline=16]{code/rips/driver.py}

The sensor network is tested for coverage holes by inspecting the
kernel of the matrix $\Delta_1 = \boundary_1^T \boundary_1 +
\boundary_2 \boundary_2^T$ \cite{SiGh2007}. Specifically,
null-vectors of $\Delta_1$, which are called harmonic 1-cochains (by
analogy with the definition of harmonic cochains used in the previous
subsection), reveal the presence of holes in the sensor network.  In
this example we explore the kernel of $\Delta_1$ by generating a
random 1-cochain \texttt{x} and extracting its harmonic part using a
discrete hodge decomposition as outlined in the previous
subsection. If the harmonic component of \texttt{x} is (numerically)
zero then we may conclude with high confidence that $\Delta_1$ is
nonsingular and that no holes are present. However, in this case the
hodge decomposition of \texttt{x} produces a nonzero harmonic
component \texttt{h}. Indeed, plotting \texttt{h} on the edges of the
Rips complex localizes the coverage hole, as the bottom right of
Figure~\ref{fig:sensor_network} demonstrates.

To set up the linear systems, the boundary matrices are obtained from
the Rips complex \texttt{rc} created above:
\lstinputlisting[firstline=20,lastline=22]{code/rips/driver.py}
Then the random cochain is created and the Hodge decomposition
computed, to find the harmonic cochain which is then normalized:
\lstinputlisting[firstline=24,lastline=29]{code/rips/driver.py}

\begin{figure}
 \centering
 \includegraphics[width=0.3\textwidth]{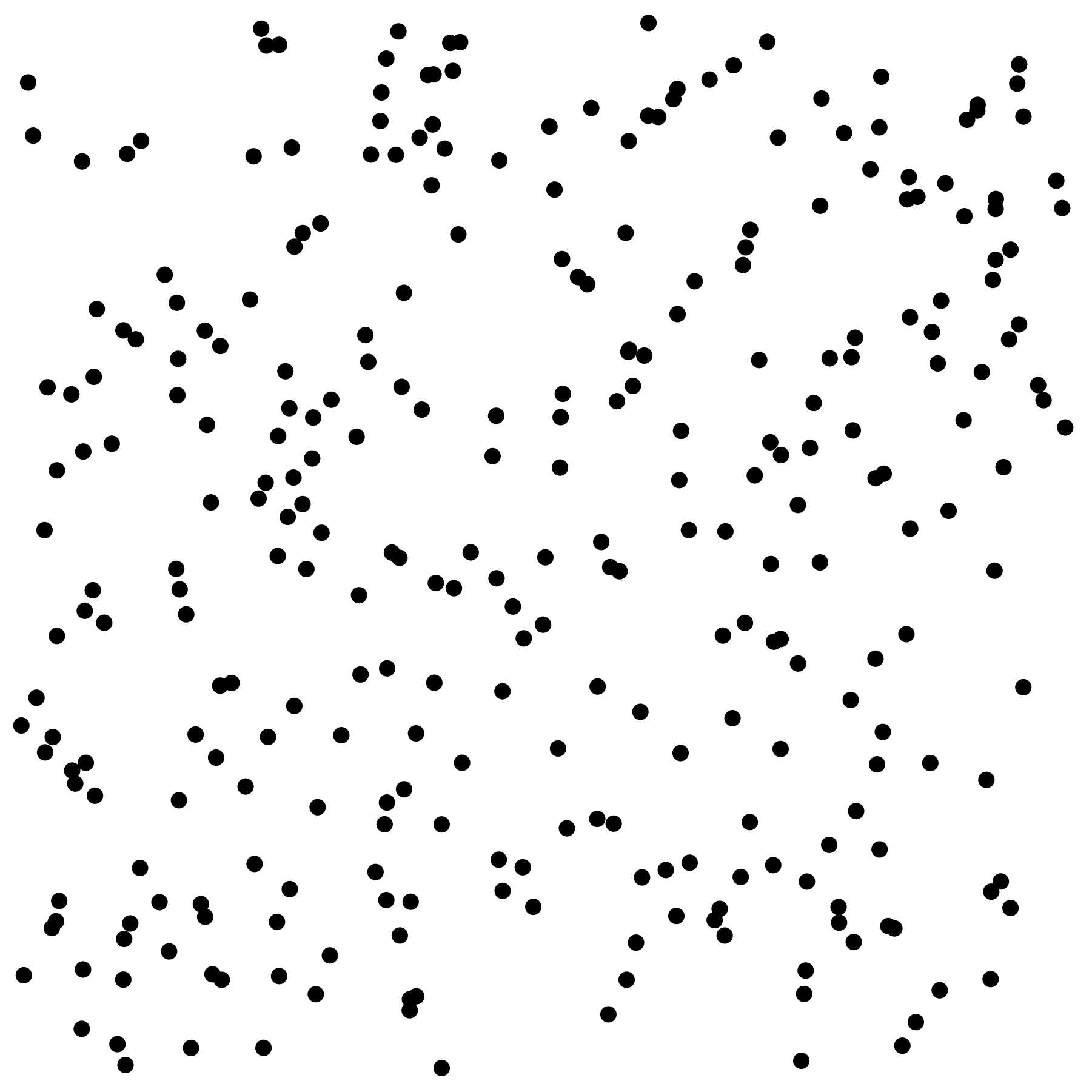}
 \includegraphics[width=0.3\textwidth]{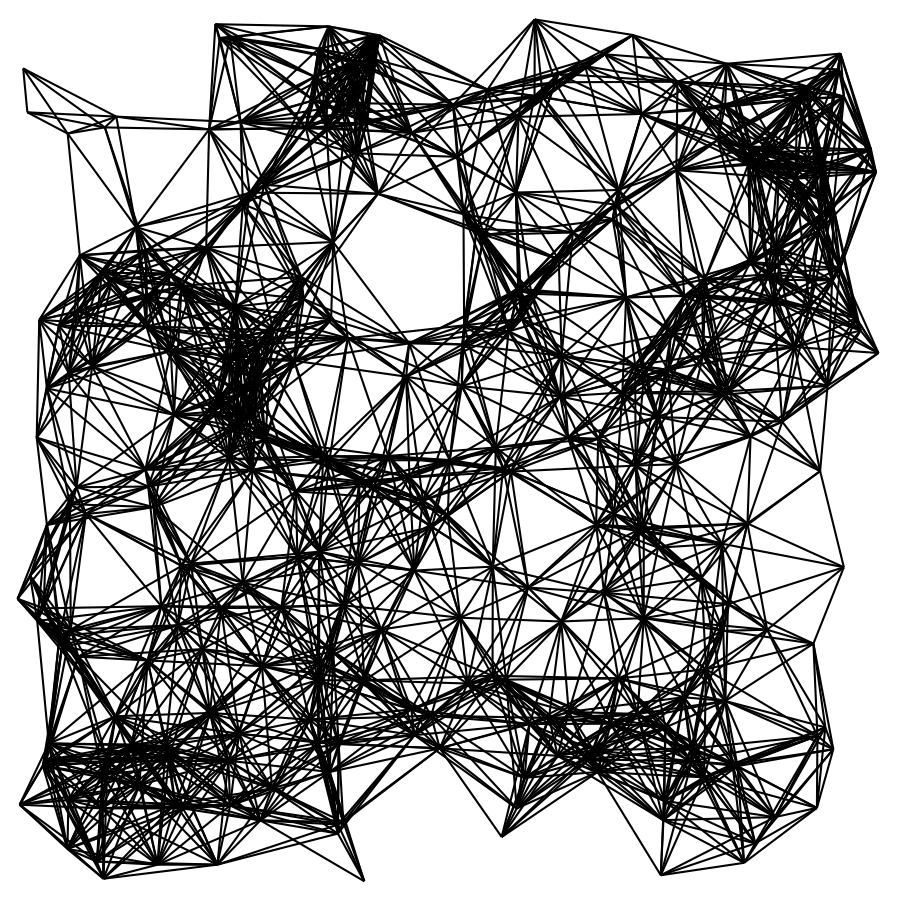}\\
 \includegraphics[width=0.3\textwidth]{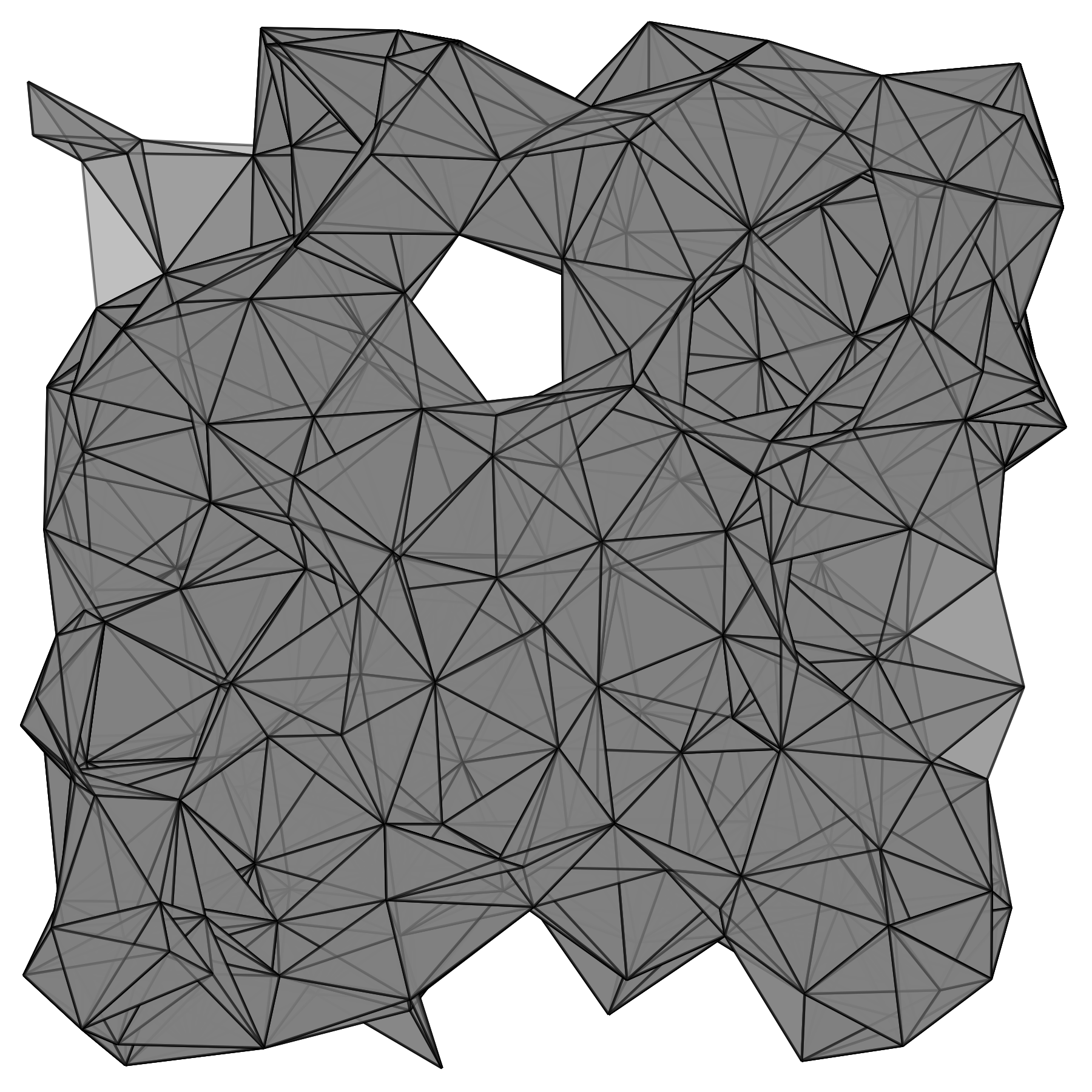}
 \includegraphics[trim=1.9in 1in 1.8in 1in, clip, scale=0.48]
 {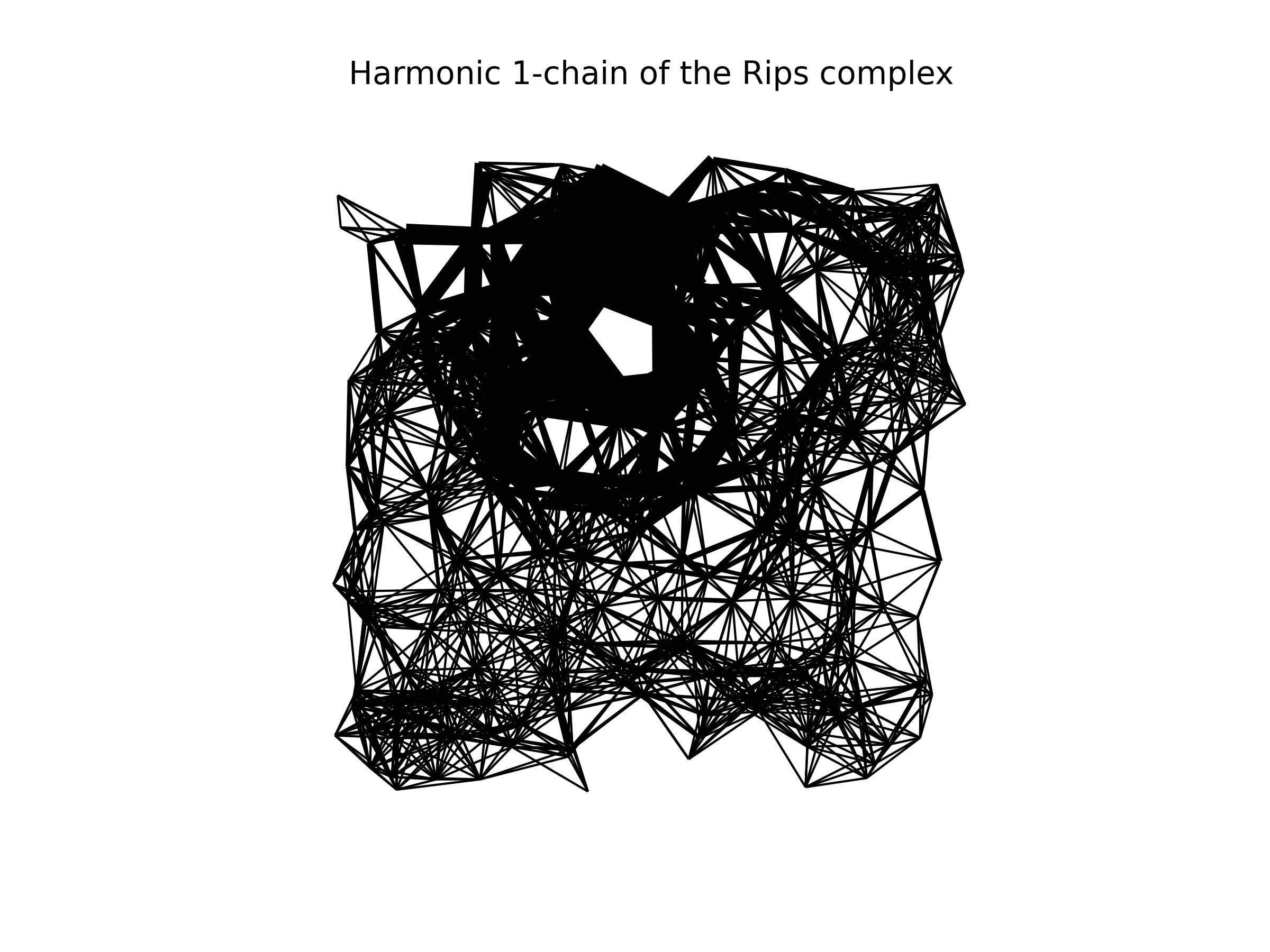}
 \caption{Hodge decomposition for finding coverage holes in an
   idealized sensor network. Top left shows a sample sensor network
   with 300 randomly distributed points. Pairs of points within a
   fixed distance of one another are connected by an edge and this is
   shown on top right. Triangles are added to the Rips complex when
   three points form a clique (complete graph). These are shown in
   bottom left. The existence of a harmonic 1-cochain indicates a
   potential hole in the sensor network coverage. In the bottom right
   figure, edge thickness reflects the magnitude of the harmonic
   cochain on each edge. See Section~\ref{subsec:sensor} for more
   details.}
 \label{fig:sensor_network}
\end{figure}

\subsection{Least squares ranking on graphs} \label{subsec:ranking}

This is a formulation for ranking alternatives based on pairwise data.
Given is a collection of alternatives or objects that have to be
ranked, by computing a ranking score that sorts them. The ranking
scores are to be computed starting from some pairwise
comparisons. Some examples of objects to be ranked are basketball
teams, movies, candidates for a job. Typically, the given data will
not have pairwise comparisons for all the possible pairs. There is no
geometry in this application, hence no exterior calculus is
involved. PyDEC however still proves useful because the full version
of this example~\cite{HiKaWa2011} uses an abstract simplicial
2-complex. Thus PyDEC is useful in forming the complex and for
determining its boundary matrix. In this simplified example, only an
abstract simplicial 1-complex is needed.

Form a simple graph $G$, with the objects to be ranked being the
vertices and with an edge between any two which have pairwise
comparison data given. If there are $n$ objects, possibly only a
sparse subset out of all possible $O(n^2)$ pairs may have comparison
data associated with them. Here we'll only require that the graph be
connected. This condition can be dropped with the consequence that the
rankings of separate components become independent of each other. The
comparison values are real numbers. 

Since $G$ is a simple graph, by orienting the edges arbitrarily it
becomes an oriented 1-dimensional abstract simplicial complex. The
vector $\omega$ of pairwise comparison values is a 1-cochain since if
A is preferred over B by, say, $4$ points, then B is preferred over A
by $-4$ points.

The ranking scores $\alpha$ which are to be computed on vertices form
a 0-cochain. For any edge $e = (u,v)$ from vertex $u$ to vertex $v$,
the difference of vertex values $\alpha(v) - \alpha(u)$ should match
$\omega(e)$ as much as possible, for example, in a least squares
sense. This idea is from~\cite{Leake1976} who proposed it as a method
for ranking football teams. By including the 3-cliques as triangles,
$G$ becomes a 2-dimensional simplicial complex. This was used
in~\cite{JiLiYaYe2011} to extend this ranking
idea. In~\cite{JiLiYaYe2011} the computation of the scores $\alpha$ is
interpreted as one part of the Hodge decomposition of $\omega$. See
Section~\ref{subsec:cohomology} above for a basic discussion of Hodge
decomposition where it is used for computing harmonic cochains on a
mesh. Here we will just compute the ranking score $\alpha$. This is
done by solving the least squares problem
$\boundary_1^T \, \alpha \simeq \omega$.

The graph in this example is used for ranking basketball teams, using
real data for a small subset of American Men's college basketball
games from 2010-2011 season. Each team is a node in the graph and has
been given a number as a name. An edge between two teams indicates
that one of more games have been played between them. The score
difference from these games becomes the input 1-cochain $\omega$, with
one value on each edge. If multiple games were played by a pair the
score differences were added to create this data. The data is stored
as a matrix in which the first two columns are the teams and the third
column is the value of the 1-cochain on that edge.

Once this data is loaded from file, an abstract simplicial complex is
created from the first two columns which form the edges of the
graph. The loading and complex creation is done by the following few
lines of code
\lstinputlisting[firstline=17,lastline=21]{code/ranking/driver.py}
In PyDEC, the simplices that are given as input to construct a complex
are preserved as is. Lower dimensional simplices that are derived from
them are stored and oriented in sorted order. Thus in the above data,
the edge between node 8 and 1 will be oriented from 8 to 1. The above
example data may mean, for example, that team labelled 1 lost to team
labelled 8 by 9 points.

The 1-cochain $\omega$ is now extracted from the data array and the
boundary matrix needed is obtained from the complex and the least
squares problem solved. All this is accomplished in the following
lines
\lstinputlisting[firstline=23,lastline=25]{code/ranking/driver.py} The
resulting alpha values computed are given below.
\[
\begin{array}{lccccccccccc}
\text{Team number}  & 0 &1 &2 &3 &4 &5 &6 &7 &8 &9 &10\\
\alpha \text{ value} & 14.5&2.0&0.0&7.2&5.9&9.0&2.3&23.6&11.0&23.8&21.7
\end{array}
\]
The team with $\alpha=0$ is the worst team according to these rankings
and the one with the largest $\alpha$ value (23.8 here) is the best
team. Note that the score difference from the game between 8 and 1
happens to be exactly the difference in $\alpha$ values between
them. This won't always be true. See for example teams 3 and 9. Such
discrepancies come from having a residual in the least squares
solution, and that is a direct result of the presence of cycles in the
graph. In fact it may even happen that team A beats B, which beats C,
which in turn beats A. Thus no assignment of $\alpha$ values will
resolve this inconsistency. This is where a second least squares
problem and hence a Hodge decomposition plays a role. The second
problem is not considered in this example. See~\cite{HiKaWa2011} for
details on the second least squares problem and the role of Hodge
decomposition and harmonic cochains in the ranking context.
\begin{figure}[t]
  \centering
  \includegraphics[scale=0.4, trim=1in 0in 1in 0in, clip]
  {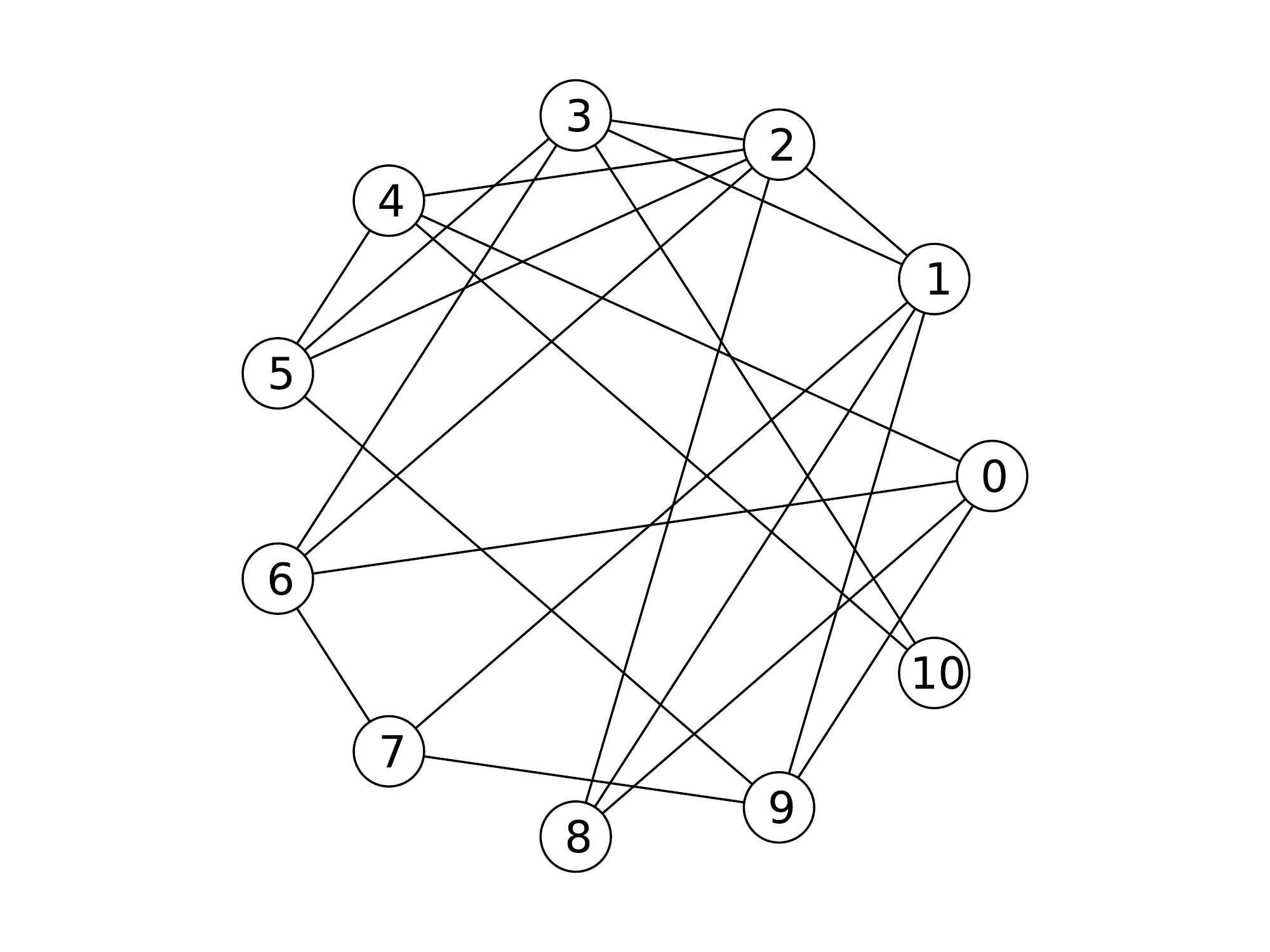}
  \caption{A typical graph of a subset of basketball games. Each node
    is a team, labelled by a number. An edge represents one or more
    games played by the two teams connected by it. The actual graph
    used in the example in Section~\ref{subsec:ranking} is a complete
    graph.}
  \label{fig:ranking}
\end{figure}

\section{Conclusions}

PyDEC is intended to be a tool for solving elliptic PDEs formulated in
terms of differential forms and for exploring computational topology
problems. It has been used for numerical experiments for Darcy
flow~\cite{HiNaCh2011}, computation of harmonic cochains on two and
three dimensional meshes~\cite{HiKaWaWa2011}, and least squares
ranking on graphs~\cite{HiKaWa2011}. It has also proved valuable in
computational topology work~\cite{DeHiKr2010,DuHi2011}, for creating
complexes and computing boundary matrices. The design goals for PyDEC
have been efficiency and ability to express mathematical formulations
easily. Section~\ref{sec:examples} which described some examples
should give an idea of how close the PyDEC code is to the mathematical
formulation of the problems considered. Many packages exist and are
being created for numerical PDE solutions using differential
forms. There are also many excellent computational topology packages.
PyDEC can handle a large variety of complexes and provides
implementations of discrete exterior calculus and lowest order finite
element exterior calculus using Whitney forms. These qualities make it
a convenient tool to explore the interrelationships between topology,
geometry, and numerical PDEs.

\section*{Acknowledgment}
This work was funded in part by NSF CAREER Award Grant DMS-0645604.

\bibliographystyle{acmdoi} 
\bibliography{hirani}

\begin{thebibliography}{10}

\bibitem{AbMaRa1988}
{\sc Abraham, R., Marsden, J.~E., and Ratiu, T.}
\newblock {\em Manifolds, Tensor Analysis, and Applications}, second~ed.
\newblock Springer--Verlag, New York, 1988.

\bibitem{ArFaWi2006}
{\sc Arnold, D.~N., Falk, R.~S., and Winther, R.}
\newblock Finite element exterior calculus, homological techniques, and
  applications.
\newblock In {\em Acta Numerica}, A.~Iserles, Ed., vol.~15. Cambridge
  University Press, 2006, pp.~1--155.

\bibitem{ArFaWi2009}
{\sc Arnold, D.~N., Falk, R.~S., and Winther, R.}
\newblock Geometric decompositions and local bases for spaces of finite element
  differential forms.
\newblock {\em Comput. Methods Appl. Mech. Engrg. 198}, 21-26 (2009),
  1660--1672.
\newblock \href {http://dx.doi.org/10.1016/j.cma.2008.12.017} {\path{doi:
  10.1016/j.cma.2008.12.017}}.

\bibitem{ArFaWi2010}
{\sc Arnold, D.~N., Falk, R.~S., and Winther, R.}
\newblock Finite element exterior calculus: from {H}odge theory to numerical
  stability.
\newblock {\em Bull. Amer. Math. Soc. (N.S.) 47}, 2 (2010), 281--354.
\newblock \href {http://dx.doi.org/10.1090/S0273-0979-10-01278-4} {\path{doi:
  10.1090/S0273-0979-10-01278-4}}.

\bibitem{BaHaKa07}
{\sc Bangerth, W., Hartmann, R., and Kanschat, G.}
\newblock {deal.II} -- a general purpose object oriented finite element
  library.
\newblock {\em ACM Trans. Math. Softw. 33}, 4 (2007), 24/1--24/27.

\bibitem{BaDo1993}
{\sc Bank, R., and Douglas, C.}
\newblock {Sparse matrix multiplication package (SMMP)}.
\newblock {\em Advances in Computational Mathematics 1}, 1 (1993), 127--137.

\bibitem{BeHi2008a}
{\sc Bell, N., and Hirani, A.~N.}
\newblock {PyDEC}: A {P}ython library for {D}iscrete {E}xterior {C}alculus
  [online].
\newblock Software made available on Google Code website.

\bibitem{Bell2008}
{\sc Bell, W.~N.}
\newblock {\em Algebraic Multigrid for Discrete Differential Forms}.
\newblock PhD thesis, University of Illinois at Urbana-Champaign, Urbana,
  Illinois, 2008.

\bibitem{BoLe2005}
{\sc Bochev, P., and Lehoucq, R.~B.}
\newblock On the finite element solution of the pure {N}eumann problem.
\newblock {\em SIAM Review 47}, 1 (2005), 50--66.
\newblock \href {http://dx.doi.org/10.1137/S0036144503426074} {\path{doi:
  10.1137/S0036144503426074}}.

\bibitem{BoHy2006}
{\sc Bochev, P.~B., and Hyman, J.~M.}
\newblock Principles of mimetic discretizations of differential operators.
\newblock In {\em Compatible Spatial Discretizations}, D.~N. Arnold, P.~B.
  Bochev, R.~B. Lehoucq, R.~A. Nicolaides, and M.~Shashkov, Eds., vol.~142 of
  {\em The IMA Volumes in Mathematics and its Applications}. Springer, Berlin,
  2006, pp.~89--119.

\bibitem{BoRo2002}
{\sc Bochev, P.~B., and Robinson, A.~C.}
\newblock Matching algorithms with physics: exact sequences of finite element
  spaces.
\newblock In {\em Collected Lectures on Preservation of Stability Under
  Discretization}, D.~Estep and S.~Tavener, Eds. Society for Industrial and
  Applied Mathematics (SIAM), 2002, ch.~8, pp.~145--166.

\bibitem{BoFeGaPe1999}
{\sc Boffi, D., Fernandes, P., Gastaldi, L., and Perugia, I.}
\newblock Computational models of electromagnetic resonators: analysis of edge
  element approximation.
\newblock {\em SIAM J. Numer. Anal. 36}, 4 (1999), 1264--1290 (electronic).

\bibitem{Bossavit1988a}
{\sc Bossavit, A.}
\newblock Whitney forms : {A} class of finite elements for three-dimensional
  computations in electromagnetism.
\newblock {\em IEE Proceedings 135, Part A}, 8 (November 1988), 493--500.

\bibitem{BoTu1982}
{\sc Bott, R., and Tu, L.~W.}
\newblock {\em Differential Forms in Algebraic Topology}.
\newblock Springer--Verlag, New York, 1982.

\bibitem{CaRiWh2005}
{\sc Castillo, P., Rieben, R., and White, D.}
\newblock {FEMSTER}: An object oriented class library of high-order discrete
  differential forms.
\newblock {\em {ACM} Transactions on Mathematical Software 31}, 4 (Dec. 2005),
  425--457.

\bibitem{Rham1955}
{\sc de~Rham, G.}
\newblock {\em Vari\'et\'es diff\'erentiables. {F}ormes, courants, formes
  harmoniques}.
\newblock Actualit\'es Sci. Ind., no. 1222 = Publ. Inst. Math. Univ. Nancago
  III. Hermann et Cie, Paris, 1955.

\bibitem{SiGh2007}
{\sc de~Silva, V., and Ghrist, R.}
\newblock Homological sensor networks.
\newblock {\em Notices of the American Mathematical Society 54}, 1 (2007),
  10--17.

\bibitem{DeDz2007}
{\sc Demlow, A., and Dziuk, G.}
\newblock An adaptive finite element method for the {L}aplace--{B}eltrami
  operator on implicitly defined surfaces.
\newblock {\em SIAM Journal on Numerical Analysis 45}, 1 (2007), 421--442.
\newblock \href {http://dx.doi.org/10.1137/050642873} {\path{doi:
  10.1137/050642873}}.

\bibitem{DeHiLeMa2005}
{\sc Desbrun, M., Hirani, A.~N., Leok, M., and Marsden, J.~E.}
\newblock Discrete exterior calculus, August 2005.
\newblock \href {http://arxiv.org/abs/math.DG/0508341}
  {\path{arXiv:math.DG/0508341}}.

\bibitem{DeHiKr2010}
{\sc Dey, T.~K., Hirani, A.~N., and Krishnamoorthy, B.}
\newblock Optimal homologous cycles, total unimodularity, and linear
  programming.
\newblock In {\em STOC '10: Proceedings of the 42nd ACM Symposium on Theory of
  Computing\/} (New York, NY, USA, June 6--8 2010), ACM, pp.~221--230.
\newblock \href {http://dx.doi.org/10.1145/1806689.1806721} {\path{doi:
  10.1145/1806689.1806721}}.

\bibitem{Dodziuk1976}
{\sc Dodziuk, J.}
\newblock Finite-difference approach to the {H}odge theory of harmonic forms.
\newblock {\em Amer. J. Math. 98}, 1 (1976), 79--104.

\bibitem{DuHi2011}
{\sc Dunfield, N.~M., and Hirani, A.~N.}
\newblock The least spanning area of a knot and the optimal bounding chain
  problem.
\newblock In {\em Proceedings of the 27th annual ACM symposium on Computational
  geometry\/} (New York, NY, USA, 2011), SoCG '11, ACM, pp.~135--144.
\newblock \href {http://dx.doi.org/10.1145/1998196.1998218} {\path{doi:
  10.1145/1998196.1998218}}.

\bibitem{Eckmann1945}
{\sc Eckmann, B.}
\newblock Harmonische {F}unktionen und {R}andwertaufgaben in einem {K}omplex.
\newblock {\em Comment. Math. Helv. 17\/} (1945), 240--255.
\newblock \href {http://dx.doi.org/10.1007/BF02566245} {\path{doi:
  10.1007/BF02566245}}.

\bibitem{EdLeZo2002}
{\sc Edelsbrunner, H., Letscher, D., and Zomorodian, A.}
\newblock Topological persistence and simplification.
\newblock {\em Discrete and Computational Geometry 28}, 4 (November 2002),
  511--533.
\newblock \href {http://dx.doi.org/10.1007/s00454-002-2885-2} {\path{doi:
  10.1007/s00454-002-2885-2}}.

\bibitem{Frankel2004}
{\sc Frankel, T.}
\newblock {\em The Geometry of Physics}, second~ed.
\newblock Cambridge University Press, Cambridge, 2004.
\newblock An introduction.

\bibitem{GiBa2010a}
{\sc Gillette, A., and Bajaj, C.}
\newblock Dual formulations of mixed finite element methods, 2010.
\newblock \href {http://arxiv.org/abs/1012.3929v3} {\path{arXiv:1012.3929v3}}.

\bibitem{GiBa2010}
{\sc Gillette, A., and Bajaj, C.}
\newblock A generalization for stable mixed finite elements.
\newblock In {\em SPM '10: Proceedings of the 14th ACM Symposium on Solid and
  Physical Modeling\/} (New York, NY, USA, 2010), ACM, pp.~41--50.
\newblock \href {http://dx.doi.org/10.1145/1839778.1839785} {\path{doi:
  10.1145/1839778.1839785}}.

\bibitem{GrHi1999}
{\sc Gradinaru, V., and Hiptmair, R.}
\newblock Whitney elements on pyramids.
\newblock {\em Electronic Transactions on Numerical Analysis 8\/} (1999),
  154--168.

\bibitem{HiPoWa2006}
{\sc Hildebrandt, K., Polthier, K., and Wardetzky, M.}
\newblock On the convergence of metric and geometric properties of polyhedral
  surfaces.
\newblock {\em Geom. Dedicata 123\/} (2006), 89--112.
\newblock \href {http://dx.doi.org/10.1007/s10711-006-9109-5} {\path{doi:
  10.1007/s10711-006-9109-5}}.

\bibitem{Hiptmair2002a}
{\sc Hiptmair, R.}
\newblock Finite elements in computational electromagnetism.
\newblock In {\em Acta Numerica}, A.~Iserles, Ed., vol.~11. Cambridge
  University Press, 2002, pp.~237--339.

\bibitem{Hirani2003}
{\sc Hirani, A.~N.}
\newblock {\em Discrete Exterior Calculus}.
\newblock PhD thesis, California Institute of Technology, May 2003.

\bibitem{HiKaWaWa2011}
{\sc Hirani, A.~N., Kalyanaraman, K., Wang, H., and Watts, S.}
\newblock Cohomologous harmonic cochains, 2011.
\newblock \href {http://arxiv.org/abs/1012.2835} {\path{arXiv:1012.2835}}.

\bibitem{HiKaWa2011}
{\sc Hirani, A.~N., Kalyanaraman, K., and Watts, S.}
\newblock Least squares ranking on graphs, 2011.
\newblock \href {http://arxiv.org/abs/1011.1716} {\path{arXiv:1011.1716}}.

\bibitem{HiNaCh2011}
{\sc Hirani, A.~N., Nakshatrala, K.~B., and Chaudhry, J.~H.}
\newblock Numerical method for {D}arcy flow derived using {D}iscrete {E}xterior
  {C}alculus, 2011.
\newblock \href {http://arxiv.org/abs/0810.3434} {\path{arXiv:0810.3434}}.

\bibitem{HoSt2011}
{\sc Holst, M., and Stern, A.}
\newblock Geometric variational crimes: Hilbert complexes, finite element
  exterior calculus, and problems on hypersurfaces, May 2011.
\newblock \href {http://arxiv.org/abs/1005.4455} {\path{arXiv:1005.4455}}.

\bibitem{HySh1997a}
{\sc Hyman, J.~M., and Shashkov, M.}
\newblock Natural discretizations for the divergence, gradient, and curl on
  logically rectangular grids.
\newblock {\em Comput. Math. Appl. 33}, 4 (1997), 81--104.
\newblock \href {http://dx.doi.org/10.1016/S0898-1221(97)00009-6} {\path{doi:
  10.1016/S0898-1221(97)00009-6}}.

\bibitem{JiLiYaYe2011}
{\sc Jiang, X., Lim, L.-H., Yao, Y., and Ye, Y.}
\newblock Statistical ranking and combinatorial hodge theory.
\newblock {\em Mathematical Programming 127\/} (2011), 203--244.
\newblock \href {http://dx.doi.org/10.1007/s10107-010-0419-x} {\path{doi:
  10.1007/s10107-010-0419-x}}.

\bibitem{Jost2005}
{\sc Jost, J.}
\newblock {\em Riemannian Geometry and Geometric Analysis}, fourth~ed.
\newblock Universitext. Springer-Verlag, Berlin, 2005.
\newblock \href {http://dx.doi.org/10.1007/3-540-28890-2} {\path{doi:
  10.1007/3-540-28890-2}}.

\bibitem{KaMiKoMr2004}
{\sc Kaczynski, T., Mischaikow, K., and Mrozek, M.}
\newblock {\em Computational homology}, vol.~157 of {\em Applied Mathematical
  Sciences}.
\newblock Springer-Verlag, New York, 2004.

\bibitem{Leake1976}
{\sc Leake, R.~J.}
\newblock A method for ranking teams: With an application to college football.
\newblock In {\em Management Science in Sports}, R.~E. Machol and S.~P. Ladany,
  Eds., vol.~4 of {\em TIMS Studies in the Management Sciences}. North-Holland
  Publishing Company, 1976, pp.~27--46.

\bibitem{LoMa2008}
{\sc Logg, A., and Mardal, K.-A.}
\newblock A symbolic engine for finite element exterior calculus.
\newblock Talk at European Finite Element Fair, May 2008.

\bibitem{LoMaWe2012}
{\sc Logg, A., Mardal, K.-A., Wells, G.~N., et~al.}
\newblock {\em Automated Solution of Differential Equations by the Finite
  Element Method}.
\newblock Springer, 2012.

\bibitem{LoWe2010}
{\sc Logg, A., and Wells, G.~N.}
\newblock {DOLFIN}: Automated finite element computing.
\newblock {\em ACM Trans. Math. Softw. 37\/} (April 2010), 20:1--20:28.
\newblock \href {http://dx.doi.org/http://doi.acm.org/10.1145/1731022.1731030}
  {\path{doi: http://doi.acm.org/10.1145/1731022.1731030}}.

\bibitem{Morita2001}
{\sc Morita, S.}
\newblock {\em Geometry of Differential Forms}, vol.~201 of {\em Translations
  of Mathematical Monographs}.
\newblock American Mathematical Society, Providence, RI, 2001.

\bibitem{Munkres1984}
{\sc Munkres, J.~R.}
\newblock {\em Elements of Algebraic Topology}.
\newblock Addison--Wesley Publishing Company, Menlo Park, 1984.

\bibitem{NiTr2006}
{\sc Nicolaides, R.~A., and Trapp, K.~A.}
\newblock Covolume discretization of differential forms.
\newblock In {\em Compatible Spatial Discretizations}, D.~N. Arnold, P.~B.
  Bochev, R.~B. Lehoucq, R.~A. Nicolaides, and M.~Shashkov, Eds., vol.~142 of
  {\em The IMA Volumes in Mathematics and its Applications}. Springer, New
  York, 2006, pp.~161--171.

\bibitem{Oliphant2007}
{\sc Oliphant, T.~E.}
\newblock Python for scientific computing.
\newblock {\em Computing in Science \& Engineering 9}, 3 (2007), 10--20.

\bibitem{Saad2003}
{\sc Saad, Y.}
\newblock {\em Iterative Methods for Sparse Linear Systems}, second~ed.
\newblock Society for Industrial and Applied Mathematics, Philadelphia, PA,
  2003.

\bibitem{Sen2003}
{\sc Sen, S.}
\newblock A cubic {W}hitney and further developments in geometric
  discretisation.
\newblock {\em arXiv:hep-th/0307166v2\/} (2003), Online.

\bibitem{Shashkov1996}
{\sc Shashkov, M.}
\newblock {\em Conservative finite-difference methods on general grids}.
\newblock CRC Press, Boca Raton, FL, 1996.

\bibitem{WaCoVa2011}
{\sc van~der Walt, S., Colbert, S., and Varoquaux, G.}
\newblock The {NumPy} array: A structure for efficient numerical computation.
\newblock {\em Computing in Science Engineering 13}, 2 (2011), 22--30.
\newblock \href {http://dx.doi.org/10.1109/MCSE.2011.37} {\path{doi:
  10.1109/MCSE.2011.37}}.

\bibitem{Whitney1957}
{\sc Whitney, H.}
\newblock {\em Geometric Integration Theory}.
\newblock Princeton University Press, Princeton, N. J., 1957.

\bibitem{Wilson2007}
{\sc Wilson, S.}
\newblock Cochain algebra on manifolds and convergence under refinement.
\newblock {\em Topology and its Applications 154}, 9 (May 2007), 1898--1920.
\newblock \href {http://dx.doi.org/10.1016/j.topol.2007.01.017} {\path{doi:
  10.1016/j.topol.2007.01.017}}.

\bibitem{Wilson2008}
{\sc Wilson, S.~O.}
\newblock Conformal cochains.
\newblock {\em Transactions of the American Mathematical Society 360\/} (2008),
  5247--5264.

\bibitem{ZoCa2005}
{\sc Zomorodian, A., and Carlsson, G.}
\newblock Computing persistent homology.
\newblock {\em Discrete and Computational Geometry 33}, 2 (February 2005),
  249--274.
\newblock \href {http://dx.doi.org/10.1007/s00454-004-1146-y} {\path{doi:
  10.1007/s00454-004-1146-y}}.

\end{thebibliography}

\section*{Appendix}
{\bf Proof of Proposition~\ref{prop:whtny_ip}:}
By definition $ M_p(i,j) = \int_{\abs{K}} \aInnerproduct {\whitney
  \cochainBasis{p}{i}} {\whitney \cochainBasis{p}{j}} \mu $.  The
integrand is nonzero only in $\ClSt(\sigma^p_i) \cap
\ClSt(\sigma^p_i)$ since the Whitney form corresponding to a simplex
is zero outside the star of that simplex \cite{Dodziuk1976}.  Thus
  \[
  M_p(i,j) = \underset{\sigma^n \hasface \sigma^p_i, \sigma^p_j}
  {\sum_{\sigma_n}}\int_{\sigma_n} 
  \aInnerproduct
      {\whitney \cochainBasis{p}{i}\bigr|_{\sigma^n}}
      {\whitney \cochainBasis{p}{j}\bigr|_{\sigma^n}} \mu \, .
      \]
By the definition of the Whitney map, $M_p(i,j)$ is 
\[
  (p!)^2 \underset{\sigma^n \hasface \sigma^p_i, \sigma^p_j}
  {\sum_{\sigma^n}} 
  \sum_{k,l=0}^p(-1)^{k+l}\\*
  \int_{\sigma^n}
  \aInnerproduct{\d\mu_{i_0}\wedge\dots\widehat{\d\mu_{i_k}}\dots\wedge
    \d\mu_{i_p}}
  {\d\mu_{j_0}\wedge\dots\widehat{\d\mu_{j_l}}\dots\wedge
    \d\mu_{j_p}} \mu_{i_k}\mu_{j_l} \, \mu\, .
\]
But differentials of barycentric coordinates are constant in
$\sigma^n$.  Thus the term
\[
\aInnerproduct{\d\mu_{i_0}\wedge\dots\widehat{\d\mu_{i_k}}\dots\wedge
  \d\mu_{i_p}}
{\d\mu_{j_0}\wedge\dots\widehat{\d\mu_{j_l}}\dots\wedge
  \d\mu_{j_p}}
\]
comes out of the integral.  Recalling
Definition~\ref{def:monomials_ip} of inner product of forms,
\[
\aInnerproduct{\d\mu_{i_0}\wedge\dots\widehat{\d\mu_{i_k}}\dots\wedge
  \d\mu_{i_p}}
{\d\mu_{j_0}\wedge\dots\widehat{\d\mu_{j_l}}\dots\wedge
  \d\mu_{j_p}}
\]
is given by
\[
\det
\begin{bmatrix}
  \aInnerproduct{\d \mu_{i_0}}{\d \mu_{j_0}} & \dots & 
  \widehat{\aInnerproduct{\d \mu_{i_0}}{\d \mu_{j_l}}} &
  \dots & \aInnerproduct{\d \mu_{i_0}}{\d \mu_{j_p}}\\
  \vdots & & \vdots & & \vdots\\
  \widehat{\aInnerproduct{\d \mu_{i_k}}{\d \mu_{j_0}}} & 
  \dots & 
  \widehat{\aInnerproduct{\d \mu_{i_k}}{\d \mu_{j_l}}} & 
  \dots &
  \widehat{\aInnerproduct{\d \mu_{i_k}}{\d \mu_{j_l}}}\\
  \vdots & & \vdots & & \vdots\\
  \aInnerproduct{\d \mu_{i_p}}{\d \mu_{j_0}} & \dots &
  \widehat{\aInnerproduct{\d \mu_{i_p}}{\d \mu_{j_l}}} &
  \dots & \aInnerproduct{\d \mu_{i_p}}{\d \mu_{j_p}}
\end{bmatrix} \, ,
\]
which completes the proof.

\end{document}